\newtheorem{theorem}{Theorem}[section]
\newtheorem*{main*}{Main Theorem}
\newtheorem{lemma}[theorem]{Lemma}
\newtheorem{proposition}[theorem]{Proposition}
\newtheorem{question}[theorem]{Question}
\theoremstyle{definition}
\newtheorem{definition}[theorem]{Definition}
\newtheorem{remark}[theorem]{Remark}
\def\BB{{\mathcal{B}}}
\def\a{\alpha}
\def\b{\beta}
\def\d{\delta}   
 \def\e{\epsilon}
\def\vp{\varphi}
\def\ae{\text{-a.e.}\ }
\newcommand{\RR}{\mathbb{R}}
\newcommand{\NN}{\mathbb{N}}
\newcommand{\ZZ}{\mathbb{Z}}
\newcommand{\cC}{\mathcal{C}}
\newcommand{\cP}{\mathcal{P}}
\newcommand{\cB}{\mathcal{B}}
\newcommand{\cV}{\mathcal{V}}
\newcommand{\cE}{\mathcal{E}}
\newcommand{\cM}{\mathcal{M}}
\newcommand{\M}{\mathcal{M}}
\newcommand{\cU}{\mathcal{U}}
\newcommand{\cL}{\mathcal{L}}
\newcommand{\diam}{\mathrm{diam}}
\newcommand{\mdim}{\mathrm{mdim}}
\title[Running heading with forty characters or less]
      {On relative metric mean dimension with potential and variational principles}
\author[first-name1 last-name1 and first-name2 last-name2]{Weisheng Wu}
\subjclass[2010]{Primary: 37A35, 37B40, 37A05}
\keywords{Metric mean dimension, variational principle, conditional entropy}
\thanks{}
\address{W. Wu: Department of Applied Mathematics, College of Science, China Agricultural University, Beijing, 100083, P.R. China}
 \email{wuweisheng@cau.edu.cn}
\begin{document}

\maketitle
\markboth{}
{}
\renewcommand{\sectionmark}[1]{}
\begin{abstract}
In this article, we introduce a notion of relative mean metric dimension with potential for a factor map $\pi: (X,d, T)\to (Y, S)$ between two topological dynamical systems. To link it with ergodic theory, we establish four variational principles in terms of metric entropy of partitions, Shapira's entropy, Katok's entropy and Brin-Katok local entropy respectively. Some results on local entropy with respect to a fixed open cover are obtained in the relative case. We also answer an open question raised by Shi \cite{Shi} partially for a very well-partitionable compact metric space, and in general we obtain a variational inequality involving box dimension of the space.
Corresponding inner variational principles given an invariant measure of $(Y,S)$ are also investigated.
\end{abstract}

\section{Introduction}
Let $(X, T)$ be a topological dynamical system (TDS for short), that is, $X$ is a compact metrizable space and $T: X\to X$ is a homeomorphism. The topological entropy $h_{\text{top}}(T)$ and metric entropy $h_\mu(T)$ measure the complexity of the dynamical system from topological and measure-theoretic points of view respectively. The well known variational principle established by Goodwyn \cite{Go} and Goodman \cite{Good} states that
$$h_{\text{top}}(T)=\sup_{\mu\in \M_T(X)}h_\mu(T)$$
 where $\M(X)$ (resp. $\M_T(X)$, $\cE_T(X)$) denotes the set of all (resp. $T$-invariant, $T$-ergodic) probability Borel measures on $X$.

To further quantify the complexity of dynamical systems of infinite entropy, a new invariant called topological mean dimension was introduced by Gromov \cite{Gro}. Metric mean dimension was introduced later by Lindenstrauss and Weiss \cite{LW00} as an upper bound for topological mean dimension. Recently, a double (minimax) variational principle is obtained for topological mean dimension by Lindenstrauss and Tsukamoto \cite{LT}, and is extended to topological mean dimension with potential by Tsukamoto \cite{Ts}. Lindenstrauss and
Tsukamoto \cite{LT1} also established a variational principle formulating metric mean dimension as a supremum of certain rate distortion
functions over invariant measures of the system. Alternative formulations of metric mean dimension in terms of metric entropy of partitions, Brin-Katok local entropy, Katok's entropy and Shapira's entropy are developed by Velozo-Velozo \cite{VV}, Gutman and \'{S}piewak \cite{GS}, and Shi \cite{Shi}.

In this paper, we consider a factor map $\pi: (X,d, T)\to (Y, S)$, i.e., $T: X\to X$ and $S: Y\to Y$ are two topological dynamical systems and $\pi: X \to Y$ is a continuous, surjective map such that $\pi \circ T=S \circ\pi$. Let $C(X, \RR)$ denote the set of all continuous functions $\vp: X\to \RR$.
The following classical relative variational principle for pressure was obtained by Ledrappier and Walters \cite{LW} and by Downarowicz and Serafin \cite{DS}: For any $\nu\in \mathcal{M}_S(Y)$ and any potential $\vp \in C(X, \RR)$,
\begin{equation}\label{e:LW}
\begin{aligned}
\sup_{\mu\in \mathcal{M}_T(X)} \Big\{h_\mu(T, X|Y)+\int_X \vp(x) d\mu(x): \pi \mu=\nu\Big\}=\int_Y P(T, \vp, y)d\nu(y)
\end{aligned}
\end{equation}
and
\begin{equation*}\label{e:LW2}
\begin{aligned}
\sup_{\mu\in \mathcal{M}_T(X)} \Big\{h_\mu(T, X|Y)+\int_X \vp(x) d\mu(x)\Big\}=P(T, \vp|Y).
\end{aligned}
\end{equation*}
This motivates us to introduce a notion of relative metric mean dimension with potential $\vp\in C(X,\RR)$ for a factor map $\pi: (X,d, T)\to (Y, S)$ (See Definition \ref{MMD}). When $(Y,S)$ is a trivial system and $\vp\equiv 0$, this notion recovers the metric mean dimension of $(X, d, T)$. We establish four variational principles in terms of metric entropy of partitions (Theorem \ref{VPpartition}), Shapira's entropy (Theorem \ref{VPShapira}), Katok's entropy (Theorem \ref{VPKatok}) and Brin-Katok local entropy (Theorem \ref{VPBK}) respectively. We also investigate relative metric mean dimension with potential with respect to an invariant measure of $(Y,S)$ of Ledrappier-Walters type \eqref{e:LW} and establish corresponding inner variational principles. We highlight that in our variational principles ``$\sup$'' actually becomes ``$\max$''.

A key ingredient in the proof of the variational principles, especially in terms of metric entropy of partitions, is the local versions of the variational principle with respect to a fixed open cover. In recent years, local variational principles have been extensively studied, for example in \cite{BGH, Rom, GW, HY, HYi}, etc. Relative versions of local variational principles for entropy and also for pressure are obtained in \cite{HYZ, MaChen}, etc. We use the relative local variational principle for pressure obtained in \cite{MaChen}.

To establish variational principles in terms of Shapira's entropy, Katok's entropy and Brin-Katok local entropy, we investigate the relations between these types of entropy at a fixed resolution $\e>0$. We extend Shapira's theorem to the relative case (Proposition \ref{shapirathm}). An open question raised by Shi \cite{Shi} is also considered: We obtain a variational inequality (Theorem \ref{VPBK1}) involving the lower Brin-Katok local entropy when the underlying metric space has finite box dimension, and a variational principle  (Theorem \ref{VPBK2}) when $(X,d)$ is very well-partitionable. We explore the ideas from \cite{BK} and its relative version in \cite{Zhou} to compare the measures of Bowen balls and the elements from refining partitions. However, the problem in general is still open.

The paper is organized as follows. In Section $2$, we give a precise definition for relative metric mean dimension with potential. In Section $3$, variational principles in terms of metric entropy of partitions, Shapira's entropy and Katok's entropy respectively are proved. In Section $4$, the variational principles or inequality involving upper or lower Brin-Katok local entropy are obtained. In Section $5$, we discuss relative metric mean dimension with potential with respect to an invariant measure of $(Y,S)$ of Ledrappier-Walters' type \eqref{e:LW} and corresponding inner variational principles. A proof of Proposition \ref{shapirathm} is included in the appendix.

\section{Relative metric mean dimension with potential}

Let $(X,d)$ be a compact metric space and $T:X\to X$ a homeomorphism. There are three equivalent ways using separated sets, spanning sets and open covers respectively, to define topological pressure and metric mean dimension with potential.
We first prepare some necessary notations and at the end of this section we give a precise definition for relative metric mean dimension with potential.

\subsection{Topological pressure}
For $n\in \NN$, define the \emph{Bowen metric} as
$$d_{n}(x,y):=\max _{0 \leq j \leq n-1}d(T^j(x),T^j(y)).$$
Let $K\subset X$ and $\e>0$. A subset $E\subset K$ is called \emph{$(n,\epsilon)$ separated} if $d_n(x,y)>\e$ for any $x,y\in E, x\neq y$. For any potential $\vp\in C(X, \RR)$, define
\begin{equation*}
\begin{aligned}
P_n(d, T, \varphi, \epsilon, K):=\sup\Big\{&\sum_{y\in E}\exp((S_n\varphi)(y)):\\
&E \text{\ is an\ } (n, \epsilon) \text{\ separated subset of\ }K\Big\}
\end{aligned}
\end{equation*}
where $(S_n\varphi)(y)=\sum_{i=0}^{n-1}\varphi^i(y)$. Then we put
\begin{equation*}
\begin{aligned}
P(d, T, \varphi, \epsilon, K):=\limsup_{n\to \infty}\frac{1}{n}\log P_n(d, T, \varphi, \epsilon, K).
\end{aligned}
\end{equation*}
If $K$ is dropped from the notation, it always means $K=X$. We note that for any $\vp, \phi\in C(X,\RR)$ (see \cite[Theorem 9.7(iv)]{Wa})
\begin{equation}\label{e:difference}
\begin{aligned}
|P(d, T, \varphi, \epsilon, K)-P(d, T, \phi, \epsilon, K)|\le \|\vp-\phi\|
\end{aligned}
\end{equation}
where $\| \cdot\|$ denote the $C^0$ norm of $C(X,\RR)$.
\begin{definition}\label{pressure}
We define the \emph{topological pressure} of $T$ with respect to potential $\varphi$ on $K\subset X$ to be
\begin{equation*}
\begin{aligned}
P(T, \varphi, K)&:=\lim_{\e \to 0}P(d, T, \varphi, \epsilon, K).
\end{aligned}
\end{equation*}
\end{definition}
We note that $P(T, \varphi, K)$ is independent of the choice of metric $d$. When $K=X$, $P(T, \varphi)$ is called the \emph{topological pressure} of $T$ with respect to $\varphi$. When $\vp\equiv 0$, $P(T, 0)$ reduces to the \emph{topological entropy} of $T$.

A set $F \subset K$ is called an \emph{$(n,\epsilon)$ spanning set }of $K$ if $K \subset \bigcup_{y\in F}B_{n}(y,\epsilon)$, where $B_{n}(y,\epsilon):=\{z\in X: d_{n}(y,z) <\epsilon\}$ is the $(n,\epsilon)$ \emph{Bowen ball} around $y$. Put
$$Q(d, T, \varphi,\epsilon, K):=\limsup_{n\to \infty}\frac{1}{n}\log Q_n(T,\varphi, \epsilon, K)$$
where
\begin{equation*}
\begin{aligned}
Q_n(d, T,\varphi, \epsilon, K):=\inf\Big\{&\sum_{y\in F}\exp((S_n\varphi)(y)):\\
&F \text{\ is an\ } (n, \epsilon) \text{\ spanning subset of\ }K\Big\}.
\end{aligned}
\end{equation*}

Let $\cB(X)$ be the collection of all Borel subsets of $X$. A \emph{cover} of $X$ is a finite family of Borel subsets of
$X$, whose union is $X$. A \emph{partition} of $X$ is a cover of $X$ whose elements are pairwise disjoint.
Let $\cP_X$ denote the set of partitions of $X$, $\mathcal{C}_X$ the set of covers of $X$ and $\mathcal{C}_X^o\subset \mathcal{C}_X$ the set of open covers of $X$. Given two covers $\cU, \cV \in \cC_X$, $\cU$ is said to be finer than $\cV$, denoted by $\cU \succeq \cV$ or $\cV \preceq \cU$,  if each element of
$\cU$ is contained in some element of $\cV$. Let $\cU\vee \cV:=\{U\cap V : U \in \cU, V\in \cV\}$.
For $m,n\in \ZZ\cup\{\pm \infty\}$, denote $\mathcal{U}_m^n:=\bigvee_{i=m}^n T^{-i}\mathcal{U}$. For $\cU\in \cC_X$, let $\diam \cU$ denote the maximal diameter of elements in $\cU$. The Lebesgue number of $\cU\in \cC^o_X$ is denoted by $Leb \cU$.

For $\cU\in \cC^o_X$ put
\begin{equation*}
\begin{aligned}
p_n(d, T,\varphi, \cU, K):=\inf\Big\{&\sum_{B\in \mathcal{V}} \sup_{y\in B}\exp((S_n\varphi)(y)): \\ &\mathcal{V} \text{\ is a finite subcover of \ } \mathcal{U}_0^{n-1} \text{\ and} \bigcup_{V\in \mathcal{V}}\supset K\Big\}
\end{aligned}
\end{equation*}
and
$$p(d, T,\varphi, \cU, K):=\limsup_{n\to \infty}\frac{1}{n}\log p_n(d, T,\varphi, \cU, K).$$
The above ``$\limsup$'' becomes ``$\lim$'' if $K$ is $T$-invariant by Theorem 11.5 in \cite{Pesin}.

The following lemma is straightforward but useful. For its proof, we refer to Theorem 9.2, p.209 and p.211 in \cite{Wa}.
\begin{lemma}\label{coincide}
Let $K\subset X$ and $\vp\in C(X,\RR)$. We have:
\begin{enumerate}
  \item If $\e>0$ and $\cU$ is an open cover with $\diam \cU\le \e$, then
  $$Q_n(d, T,\varphi, \epsilon, K)\le P_n(d, T,\varphi, \epsilon, K)\le p_n(d, T,\varphi, \cU, K).$$
  \item If $\cU$ is an open cover with $Leb \cU=\d$, then
  $$p_n(d, T,\varphi, \cU, K)\le e^{n\tau_\cU}Q_n(d, T,\varphi, \d/2, K)\le e^{n\tau_\cU}P_n(d, T,\varphi, \d/2, K)$$
  where $\tau_\cU:=\sup\{|\vp(x)-\vp(y)|: d(x,y)\le \diam \cU\}$.
  \item If $\d=\d(\e)>0$ is such that $d(x,y)<\e/2$ implies $|\vp(x)-\vp(y)|<\d$, then
  $$P_n(d, T,\varphi, \epsilon, K)\le e^{n\d(\e)}Q_n(d, T,\varphi, \epsilon/2, K).$$
  \end{enumerate}
\end{lemma}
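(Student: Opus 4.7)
The three inequalities are variants of the standard comparison arguments relating separated sets, spanning sets, and open-cover formulations of pressure (as in Theorem 9.2 of Walters). The plan is to handle each item directly at the level of the quantities $P_n$, $Q_n$, $p_n$, without passing to limits, and the only new ingredient compared with the entropy case is to keep track of the weights $e^{S_n\varphi}$ using the continuity modulus of $\vp$.

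For (1), I would first observe that a maximal $(n,\ep)$-separated subset $E\subset K$ is automatically $(n,\ep)$-spanning, which gives $Q_n(d,T,\vp,\ep,K)\le P_n(d,T,\vp,\ep,K)$. For the second inequality, I take a finite subcover $\cV\subset \cU_0^{n-1}$ of $K$. Because $\diam\cU\le\ep$, for each $B=U_{i_0}\cap T^{-1}U_{i_1}\cap\cdots\cap T^{-(n-1)}U_{i_{n-1}}\in\cV$ the set $B$ has $d_n$-diameter at most $\ep$, so any $(n,\ep)$-separated $E\subset K$ meets $B$ in at most one point. Thus
\[
\sum_{y\in E}e^{S_n\vp(y)}\le \sum_{B\in\cV}\sup_{y\in B}e^{S_n\vp(y)},
\]
and taking infimum over $\cV$ and supremum over $E$ gives $P_n\le p_n$.

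For (2), given an $(n,\delta/2)$-spanning set $F\subset K$, each Bowen ball $B_n(y,\delta/2)$ has $d$-diameter at most $\delta$ in every time coordinate $T^j y$; since $Leb\,\cU=\delta$, for each $j\in\{0,\ldots,n-1\}$ the set $T^j B_n(y,\delta/2)$ lies in some $U_{i_j(y)}\in\cU$. Hence $B_n(y,\delta/2)\subset V_y:=\bigcap_{j=0}^{n-1}T^{-j}U_{i_j(y)}\in\cU_0^{n-1}$, and $\{V_y:y\in F\}$ is a finite subcover of $\cU_0^{n-1}$ covering $K$. On each $V_y$ the oscillation of $\vp\circ T^j$ is at most $\tau_\cU$, so $\sup_{z\in V_y}e^{S_n\vp(z)}\le e^{n\tau_\cU}e^{S_n\vp(y)}$, giving
\[
p_n(d,T,\vp,\cU,K)\le e^{n\tau_\cU}\sum_{y\in F}e^{S_n\vp(y)}.
\]
Taking infimum over $F$ yields $p_n\le e^{n\tau_\cU}Q_n(d,T,\vp,\delta/2,K)$, and $Q_n\le P_n$ at the same resolution is the standard argument from (1).

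For (3), I would take an $(n,\ep)$-separated set $E\subset K$ and an $(n,\ep/2)$-spanning set $F\subset K$, and define a map $\pi:E\to F$ by sending each $y\in E$ to some $x\in F$ with $d_n(y,x)<\ep/2$. Two distinct $y_1,y_2\in E$ cannot map to the same $x$, since $d_n(y_1,y_2)\le d_n(y_1,x)+d_n(x,y_2)<\ep$ would contradict $(n,\ep)$-separation; so $\pi$ is injective. The hypothesis on $\delta=\delta(\ep)$ gives $|\vp(T^j y)-\vp(T^j\pi(y))|<\delta$ for each $0\le j\le n-1$, hence $|S_n\vp(y)-S_n\vp(\pi(y))|<n\delta$, and summing over $E$ followed by taking suitable $\sup$/$\inf$ yields $P_n(d,T,\vp,\ep,K)\le e^{n\delta(\ep)}Q_n(d,T,\vp,\ep/2,K)$.

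The whole lemma is essentially bookkeeping, so I do not foresee a serious obstacle; the only point that requires care is the second half of (2), where one must make sure that the subcover $\{V_y:y\in F\}$ is genuinely a subcover of $\cU_0^{n-1}$ and that the Birkhoff oscillation bound $\tau_\cU$ (not $\tau_{\cU_0^{n-1}}$, which would be too large) is the right constant.
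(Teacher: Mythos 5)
Your proof is correct and reproduces exactly the standard argument from Walters' \emph{An Introduction to Ergodic Theory} (Theorem 9.2 and pp.~209--211), which is precisely the reference the paper gives in lieu of a proof. All three items are handled the same way as in the source: maximal separated sets are spanning for (1) and the $\le\epsilon$ vs.\ $>\epsilon$ dichotomy gives $P_n\le p_n$; the Lebesgue-number covering and the oscillation bound $\tau_\cU$ give (2); the injective map from a separated set into a spanning set with the modulus-of-continuity bound gives (3). No discrepancies with the cited argument.
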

As an immediate corollary of Lemma \ref{coincide}, one has the following reformulations of topological pressure.
\begin{proposition}
Let $K\subset X$ and $\vp\in C(X,\RR)$. We have
\begin{equation*}
\begin{aligned}
P(T, \varphi, K)=\lim_{\e \to 0}Q(d, T, \varphi, \epsilon, K)
=\lim_{\diam \cU\to 0}p(d, T,\varphi, \cU, K).
\end{aligned}
\end{equation*}
\end{proposition}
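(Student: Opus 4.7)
The plan is to show both equalities by a sandwich argument using the three comparisons in Lemma~\ref{coincide}, combined with the uniform continuity of $\vp$. Throughout, let $\delta(\ep)$ denote the modulus of continuity of $\vp$ on $X$, so that $\delta(\ep)\to 0$ as $\ep\to 0$.

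First I would handle the identity $P(T,\vp,K)=\lim_{\ep\to 0}Q(d,T,\vp,\ep,K)$. From part (1) of Lemma~\ref{coincide} we immediately get $Q_n(d,T,\vp,\ep,K)\le P_n(d,T,\vp,\ep,K)$, so passing to $\tfrac{1}{n}\log$ and $\limsup$ yields $Q(d,T,\vp,\ep,K)\le P(d,T,\vp,\ep,K)$; letting $\ep\to 0$ gives the inequality $\le$. Conversely, part (3) of Lemma~\ref{coincide} gives
\[
\frac{1}{n}\log P_n(d,T,\vp,\ep,K)\le \delta(\ep)+\frac{1}{n}\log Q_n(d,T,\vp,\ep/2,K),
\]
hence $P(d,T,\vp,\ep,K)\le \delta(\ep)+Q(d,T,\vp,\ep/2,K)$. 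Since $\delta(\ep)\to 0$, letting $\ep\to 0$ gives the reverse inequality, and equality follows (both monotone limits exist because the quantities are nonincreasing in $\ep$).

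Next I would handle $P(T,\vp,K)=\lim_{\diam\cU\to 0}p(d,T,\vp,\cU,K)$. For the lower bound, fix any open cover $\cU$ and set $\ep=\diam\cU$. Lemma~\ref{coincide}(1) gives $P_n(d,T,\vp,\ep,K)\le p_n(d,T,\vp,\cU,K)$, and therefore $P(d,T,\vp,\ep,K)\le p(d,T,\vp,\cU,K)$; letting $\diam\cU\to 0$ forces $\ep\to 0$ and yields $P(T,\vp,K)\le \liminf_{\diam\cU\to 0}p(d,T,\vp,\cU,K)$. For the upper bound, Lemma~\ref{coincide}(2) gives
\[
p(d,T,\vp,\cU,K)\le \tau_{\cU}+Q(d,T,\vp,\mathrm{Leb}\,\cU/2,K).
\]
As $\diam\cU\to 0$, one has $\tau_{\cU}\to 0$ by uniform continuity of $\vp$, and $\mathrm{Leb}\,\cU\le \diam\cU\to 0$, so applying the first part of the proof gives $\limsup_{\diam\cU\to 0}p(d,T,\vp,\cU,K)\le \lim_{\ep\to 0}Q(d,T,\vp,\ep,K)=P(T,\vp,K)$. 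This completes the sandwich and proves the two identities simultaneously.

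There is no real obstacle here: the proposition is exactly the statement that the three variants of $\limsup$-type pressure collapse to the same limit, and once Lemma~\ref{coincide} is in hand the only ingredients needed are the uniform continuity of $\vp$ (controlling $\delta(\ep)$ and $\tau_{\cU}$) and the basic inequality $\mathrm{Leb}\,\cU\le \diam\cU$. The only place where minor care is required is to note that the monotonicity in $\ep$ ensures the limits exist, so the $\liminf/\limsup$ manipulations are legitimate.
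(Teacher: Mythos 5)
Your argument is correct and is essentially the paper's: the paper states the proposition as an ``immediate corollary'' of Lemma~\ref{coincide}, and your sandwich argument just spells out what that means. One small point worth tidying: the inequality $\mathrm{Leb}\,\cU\le\diam\cU$ is not always valid (e.g.\ when $\cU$ contains a set of diameter comparable to $\diam X$, or in spaces with isolated points), and in fact you don't need $\mathrm{Leb}\,\cU\to 0$ at all. Since $Q_n(d,T,\vp,\e,K)$, and hence $Q(d,T,\vp,\e,K)$, is non-increasing in $\e$, one has $Q(d,T,\vp,\mathrm{Leb}\,\cU/2,K)\le\lim_{\e\to 0}Q(d,T,\vp,\e,K)$ for every $\cU$, so Lemma~\ref{coincide}(2) already gives $p(d,T,\vp,\cU,K)\le\tau_\cU+\lim_{\e\to 0}Q(d,T,\vp,\e,K)$, and letting $\diam\cU\to 0$ (so $\tau_\cU\to 0$) finishes the upper bound. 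With that substitution the proof is airtight and matches the intended argument.
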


\subsection{Relative metric mean dimension with potential}

Let $T:(X,d)\to (X,d)$ and $S:Y\to Y$ be two TDSs. Recall that a continuous map $\pi:X\to Y$ is called a factor map between $(X,T)$ and $(Y,S)$ if it is onto and $\pi\circ T=S\circ\pi$. Given $\e>0$ and $\mathcal{U}\in \mathcal{C}^o(X)$, write
\begin{equation*}
\begin{aligned}
P_n(d, T,\varphi,\e|Y):=& \sup_{y\in Y}P_n(d, T,\varphi,\e, \pi^{-1}y),\\
Q_n(d, T,\varphi,\e|Y):=&  \sup_{y\in Y}Q_n(d, T,\varphi,\e, \pi^{-1}y),\\
p_n(d, T,\varphi,\cU|Y):=&  \sup_{y\in Y}p_n(d, T,\varphi,\cU, \pi^{-1}y).
\end{aligned}
\end{equation*}
Then we define
\begin{equation*}
\begin{aligned}
P(d, T,\varphi,\e|Y):=& \limsup_{n\to \infty}\frac{1}{n}\log P_n(d, T,\varphi,\e|Y) ,\\
Q(d, T,\varphi,\e|Y):=&  \limsup_{n\to \infty}\frac{1}{n}\log Q_n(d, T,\varphi,\e|Y),\\
P(d, T,\varphi,\cU|Y):=&  \lim_{n\to \infty}\frac{1}{n}\log p_n(d, T,\varphi,\cU|Y).
\end{aligned}
\end{equation*}
The limit in the last equality exists by the subadditivity of $\log p_n(d, T,\varphi,\cU|Y)$, see \cite{MaChen}. $P(d, T,\varphi,\cU|Y)$
is called the \emph{relative local topological pressure} of $\vp$ with respect to $\cU$ and $(Y,S)$. The \emph{relative topological pressure} of $\vp$ with respect to $(Y,S)$ is then defined as
$$P(T,\varphi|Y)=\sup_{\cU\in \cC^o_X}P(d, T,\varphi,\cU|Y).$$
By Lemma \ref{coincide},
$$P(T,\varphi|Y)=\lim_{\e\to 0}P(d, T,\varphi,\e|Y)=\lim_{\e\to 0}Q(d, T,\varphi,\e|Y)$$
and it is independent of the choice of metric $d$.
For given $y\in Y$, we also define
\begin{equation*}
\begin{aligned}
P(d, T,\varphi,\e, y):=& \limsup_{n\to \infty}\frac{1}{n}\log P_n(d, T,\varphi,\e, \pi^{-1}y) ,\\
Q(d, T,\varphi,\e, y):=&  \limsup_{n\to \infty}\frac{1}{n}\log Q_n(d, T,\varphi,\e, \pi^{-1}y),\\
P(d, T,\varphi,\cU, y):=&  \limsup_{n\to \infty}\frac{1}{n}\log p_n(d, T,\varphi,\cU, \pi^{-1}y).
\end{aligned}
\end{equation*}

Now we are ready to define relative metric mean dimension with potential.
\begin{definition}\label{MMD}
Let $\pi:(X,d,T)\to (Y,S)$ be a factor map and $\vp\in C(X,\RR).$ The \emph{upper relative metric mean dimension with potential} $\vp$ of $(X,d,T)$ with respect to $(Y, S)$ is defined by
\begin{equation*}
\begin{aligned}
\overline{\mdim}_M(X,d,T,\vp|Y):=\limsup_{\e \to 0}\frac{1}{\log\frac{1}{\e}}P(d, T, \varphi\log\frac{1}{\e}, \epsilon|Y).
\end{aligned}
\end{equation*}
Similarly, the \emph{lower relative metric mean dimension with potential} $\vp$ of $(X,d,T)$ with respect to $(Y, S)$ is defined by
\begin{equation*}
\begin{aligned}
\underline{\mdim}_M(X,d,T,\vp|Y):=\liminf_{\e \to 0}\frac{1}{\log\frac{1}{\e}}P(d, T, \varphi\log\frac{1}{\e}, \epsilon|Y).
\end{aligned}
\end{equation*}
\end{definition}
When $(Y,S)$ is trivial, we obtain \emph{upper/lower metric mean dimension with potential}, which is introduced and studied in \cite{Ts}.

\begin{remark}\label{compare}
By definition, $\overline{\mdim}_M(X,d,T,\vp|Y)$ depends on the metric $d$ on $X$, but not on the metric on $Y$. It is clear that
$$\overline{\mdim}_M(X,d,T,\vp|Y)\le \overline{\mdim}_M(X,d,T,\vp).$$
Unlike the topological entropy case, it is possible that there exists some metric $d'$ on $Y$ such that $\overline{\mdim}_M(X,d,T) < \overline{\mdim}_M(Y,d',S)$.
\end{remark}

\begin{proposition}
We have
\begin{equation*}
\begin{aligned}
\overline{\mdim}_M(X,d,T,\vp|Y)=&\limsup_{\e \to 0}\frac{1}{\log\frac{1}{\e}}Q(d, T, \varphi\log\frac{1}{\e}, \epsilon|Y),\\
\underline{\mdim}_M(X,d,T,\vp|Y)=&\liminf_{\e \to 0}\frac{1}{\log\frac{1}{\e}}Q(d, T, \varphi\log\frac{1}{\e}, \epsilon|Y).
\end{aligned}
\end{equation*}
\end{proposition}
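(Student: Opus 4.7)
The plan is to sandwich the quantity $P(d, T, \varphi\log(1/\e), \epsilon|Y)$ between two $Q$-quantities using Lemma~\ref{coincide}, and then exploit the fact that rescaling $\e\mapsto \e/2$ only introduces errors that are negligible after dividing by $\log(1/\e)$. I will only spell out the upper case, the lower case being identical with $\liminf$ in place of $\limsup$.

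First, the easy direction. Fix $\e>0$ and write $\psi_\e:=\varphi\log(1/\e)$. By Lemma~\ref{coincide}(1), for every $y\in Y$,
\[
Q_n(d,T,\psi_\e,\e,\pi^{-1}y)\le P_n(d,T,\psi_\e,\e,\pi^{-1}y).
\]
Taking the supremum over $y\in Y$, then $\limsup_{n\to\infty}\frac1n\log(\cdot)$, and dividing by $\log(1/\e)$ yields
\[
\frac{Q(d,T,\psi_\e,\e|Y)}{\log(1/\e)}\le \frac{P(d,T,\psi_\e,\e|Y)}{\log(1/\e)},
\]
and taking $\limsup_{\e\to 0}$ gives ``$\le$'' inside the claimed identity, i.e.\ the $\ge$ direction of the upper equality.

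For the reverse direction I use Lemma~\ref{coincide}(3) with the potential $\psi_\e$. Let $\omega_\varphi(\e):=\sup\{|\varphi(x)-\varphi(y)|:d(x,y)\le \e/2\}$; then the quantity $\delta(\e)$ of that lemma for the potential $\psi_\e$ is $\omega_\varphi(\e)\log(1/\e)$, so
\[
P_n(d,T,\psi_\e,\e,\pi^{-1}y)\le e^{n\,\omega_\varphi(\e)\log(1/\e)}\, Q_n(d,T,\psi_\e,\e/2,\pi^{-1}y).
\]
Supping over $y\in Y$, taking $\limsup_{n\to\infty}\frac1n\log(\cdot)$, and dividing by $\log(1/\e)$ gives
\[
\frac{P(d,T,\psi_\e,\e|Y)}{\log(1/\e)}\le \omega_\varphi(\e)+\frac{Q(d,T,\psi_\e,\e/2|Y)}{\log(1/\e)}.
\]
Since $\varphi$ is uniformly continuous on the compact space $X$, $\omega_\varphi(\e)\to 0$ as $\e\to 0$.

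The only remaining step is to replace $\e/2$ by $\e'$ in the $Q$-term and match the potential. Set $\e'=\e/2$; then $\|\psi_\e-\psi_{\e'}\|=\|\varphi\|\log 2$, so the variant of~\eqref{e:difference} for $Q$ (proved identically) yields
\[
\bigl|Q(d,T,\psi_\e,\e'|Y)-Q(d,T,\psi_{\e'},\e'|Y)\bigr|\le \|\varphi\|\log 2.
\]
Dividing by $\log(1/\e)$ sends this error to $0$, and since $\log(1/\e)/\log(1/\e')\to 1$ as $\e\to 0$, taking $\limsup_{\e\to 0}$ of the displayed inequality gives
\[
\overline{\mdim}_M(X,d,T,\varphi|Y)\le \limsup_{\e'\to 0}\frac{Q(d,T,\varphi\log(1/\e'),\e'|Y)}{\log(1/\e')}.
\]
Combining with the easy direction finishes the upper identity; the lower identity follows by the same argument with $\liminf$. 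The only real obstacle is the cosmetic one of the potential itself depending on $\e$, but this is harmless because the change $\log(1/\e)\leftrightarrow \log(1/(\e/2))$ differs by a bounded constant while the normalization $\log(1/\e)$ tends to $\infty$.
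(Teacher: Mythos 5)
Your proof is correct and follows essentially the same strategy as the paper: Lemma~\ref{coincide}(1) for the easy inequality, Lemma~\ref{coincide}(3) for the hard direction, plus the Lipschitz-in-potential estimate~\eqref{e:difference} and the observation that $\log(1/\e)/\log(2/\e)\to 1$. The only (harmless) deviation is the order of operations: the paper first uses~\eqref{e:difference} to change the potential from $\varphi\log(1/\e)$ to $\varphi\log(2/\e)$ in $P$ at resolution $\e$, and then applies Lemma~\ref{coincide}(3) to convert $P$ at resolution $\e$ into $Q$ at resolution $\e/2$, so the potential and resolution already match; you instead apply Lemma~\ref{coincide}(3) first with potential $\psi_\e=\varphi\log(1/\e)$ and then invoke a $Q$-analogue of~\eqref{e:difference} to shift $\psi_\e$ to $\psi_{\e/2}$. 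You correctly flag that this $Q$-analogue is not stated in the paper but is proved identically, so the argument is complete.
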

\begin{proof}
By (1) of Lemma \ref{coincide}, we have
\begin{equation*}
\begin{aligned}
\overline{\mdim}_M(X,d,T,\vp|Y)\ge &\limsup_{\e \to 0}\frac{1}{\log\frac{1}{\e}}Q(d, T, \varphi\log\frac{1}{\e}, \epsilon|Y).
\end{aligned}
\end{equation*}
By (3) of Lemma \ref{coincide} and \eqref{e:difference}, we have
\begin{equation*}
\begin{aligned}
\overline{\mdim}_M(X,d,T,\vp|Y)\le &\limsup_{\e \to 0}\frac{1}{\log\frac{1}{\e}}\left(P(d, T, \varphi\log\frac{2}{\e}, \epsilon|Y)+\|\vp\|\log2\right)\\
\le &\limsup_{\e \to 0}\frac{1}{\log\frac{1}{\e}}\left(\log e^{\d(\e)\log \frac{2}{\e}}+Q(d, T, \varphi\log\frac{2}{\e}, \epsilon/2|Y)\right)\\
= &\limsup_{\e \to 0}\frac{1}{\log\frac{1}{\e}}Q(d, T, \varphi\log\frac{1}{\e}, \epsilon|Y).
\end{aligned}
\end{equation*}
where we used the facts that $\lim_{\e\to 0}\d(\e)=0$ and $\lim_{\e\to 0}\frac{\log\frac{1}{\e}}{\log\frac{2}{\e}}=1$.
The equality for $\underline{\mdim}_M(X,d,T,\vp|Y)$ in the proposition follows analogously.
\end{proof}

\section{Three variational principles}
Our goal in this section is to prove three variational principles, Theorems \ref{VPpartition}, \ref{VPShapira}, \ref{VPKatok}. To begin with, we must reacall some standard notations and classical results on measurable partitions and conditional metric entropy.

\subsection{Conditional metric entropy and local variational principle}
Let $(X, \mathcal{A}, \nu)$ be a standard probability space. For a partition (not necessarily countable) $\xi$ of $X$, let $\xi(x)$ denote the element of $\xi$ containing $x$.
Let $\mathcal{B}(\xi)$ denote the smallest
sub-$\sigma$-algebra of $\mathcal{A}$ that contains all elements of $\xi$.
A partition $\xi$ of $X$ is called \emph{measurable} if there exists a countable set $\{A_n\}_{n\in \mathbb{N}}\subset \mathcal{B}(\xi)$ such that
for almost every pair $C_1,C_2\in \xi$, we can find some $A_n$ which separates them
in the sense that $C_1\subset A_n, C_2\subset X-A_n$.
The \emph{canonical system
of conditional measures of $\nu$ relative  to $\xi$}
is a family of probability
measures $\{\nu_x^\xi: x\in X\}$ with $\nu_x^\xi\bigl(\xi(x)\bigr)=1$,
such that for every measurable set $B\subset X$, $x\mapsto \nu_x^{\xi}(B)$
is $\mathcal{B}(\xi)$-measurable and
\[
\nu (B)=\int_X\nu_x^{\xi}(B)d\nu(x).
\]
The classical result of Rokhlin (cf. \cite{R}) says that if $\xi$ is a measurable partition, then there exists a system of conditional measures relative to $\xi$. It is unique
in the sense that two such systems coincide in a set of full $\nu$-measure.
For measurable partitions $\a$ and $\xi$, the \emph{conditional information function} is defined as
$$I_\nu(\a|\xi)(x):=-\log \nu_x^\xi(\alpha(x))$$
and the \emph{conditional entropy of $\a$ given $\xi$ }with respect to $\nu$ is defined as
$$H_\nu(\a|\xi):=\int_X I_\nu(\a|\xi)(x)d\nu(x).$$

Let $T: (X, \mathcal{A}, \mu)\to (X, \mathcal{A}, \mu)$ be a measure-preserving transformation.
The \emph{conditional entropy of $T$ with respect to a measurable partition $\a$
given $\xi$} is defined as
$$h_\mu(T, \alpha|\xi):=\limsup_{n\to \infty}\frac{1}{n}H_\mu(\alpha_0^{n-1}|\xi).
$$
A measurable partition $\xi$ is called \emph{invariant} under $T$ if $T^{-1}(\xi(Tx))=\xi(x)$ for $\mu \ae x.$ The following is a conditional version of Shannon-McMillan-Breiman (SMB for short) theorem.
\begin{theorem}\cite[Theorem B.0.1]{Do}\label{smb0}
Let $(X,T)$ be a TDS, $\mu\in\M_T(X)$, $\xi$ a $T$-invariant measurable partition and $\alpha\in \cP_X$.
Then the following limit exists
$$
\lim_{n\rightarrow\infty}\frac{1}{n}I_{\mu}(\alpha_0^{n-1}|\xi)(x):=h_\mu(T,\a|\xi,x) \quad   \mu \ae \text{and in } L^1(\mu),
$$
and $\int_X h_\mu(T,\a|\xi,x) d\mu(x)=h_\mu(T,\a|\xi).$ Moreover, if $\mu$ is ergodic then
$$h_\mu(T,\a|\xi,x)=h_\mu(T,\a|\xi)$$
 for $\mu \ae x\in X$.
\end{theorem}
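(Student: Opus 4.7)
The plan is to reduce the conditional Shannon--McMillan--Breiman statement to a non-stationary Birkhoff-type ergodic theorem applied along the orbit of a suitable adapted function. First I would apply the chain rule
$$I_\mu(\alpha_0^{n-1}|\xi) = I_\mu(\alpha\,|\,\alpha_1^{n-1}\vee \xi) + I_\mu(\alpha_1^{n-1}|\xi),$$
and use the $T$-invariance of $\mu$ together with that of $\xi$ to justify the shift identity $I_\mu(\alpha_1^{n-1}|\xi)(x) = I_\mu(\alpha_0^{n-2}|\xi)(Tx)$; concretely, the $T$-invariance of $\cB(\xi)$ together with that of $\mu$ yields $\mu_x^\xi(T^{-1}B) = \mu_{Tx}^\xi(B)$ for $\mu$-a.e.\ $x$ and every Borel $B$, and one then substitutes $B=\alpha_0^{n-2}(Tx)$ since $\alpha_1^{n-1}(x)=T^{-1}\alpha_0^{n-2}(Tx)$. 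Iterating yields the telescoping identity
$$\frac{1}{n}I_\mu(\alpha_0^{n-1}|\xi)(x) = \frac{1}{n}\sum_{k=0}^{n-1} f_{n-1-k}(T^k x),\qquad f_m := I_\mu(\alpha \,|\, \alpha_1^m \vee \xi),$$
with $f_0 = I_\mu(\alpha|\xi)$. This reduces the claim to controlling a Ces\`{a}ro average in which both the integrand and its iterate vary with the summation index.

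Next I would invoke the increasing martingale convergence theorem, applied to $\E(\mathbf{1}_A\,|\,\cB(\alpha_1^m\vee\xi))$ as $A$ ranges over the finitely many atoms of $\alpha$, to conclude $f_m \to f_\infty := I_\mu(\alpha \,|\, \alpha_1^{\infty} \vee \xi)$ almost surely. I would then apply Maker's generalized Birkhoff theorem: provided $f_m\to f_\infty$ a.s.\ and $\sup_m f_m \in L^1(\mu)$, one obtains
$$\lim_{n\to\infty} \frac{1}{n}\sum_{k=0}^{n-1} f_{n-1-k}(T^k x) = \E(f_\infty \,|\, \cI)(x)\quad \text{$\mu$-a.e.\ and in } L^1(\mu),$$
where $\cI$ is the $\sigma$-algebra of $T$-invariant Borel sets. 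The identification $\int f_\infty\, d\mu = h_\mu(T,\alpha|\xi)$ then follows from dominated convergence applied to the decreasing sequence $H_\mu(\alpha|\alpha_1^m\vee\xi)\downarrow H_\mu(\alpha|\alpha_1^\infty\vee\xi)$ together with the telescoping identity $\frac{1}{n}H_\mu(\alpha_0^{n-1}|\xi) = \frac{1}{n}\sum_{k=0}^{n-1}H_\mu(\alpha|\alpha_1^{n-1-k}\vee\xi)$, whose Ces\`{a}ro limit is $H_\mu(\alpha|\alpha_1^\infty\vee\xi)$.

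The main technical obstacle is establishing the integrability of $\sup_m f_m$, which both legitimizes Maker's theorem and upgrades the almost-sure convergence to $L^1$-convergence. I would address this via the standard conditional Chung--Doob maximal estimate: for each atom $A\in\alpha$ the sequence $m\mapsto \mu_{(\cdot)}^{\alpha_1^m\vee\xi}(A)$ is a nonnegative martingale with respect to the increasing filtration $\{\cB(\alpha_1^m\vee\xi)\}_{m\ge 1}$, so Doob's maximal inequality applied atomwise and then summed over the finitely many atoms of $\alpha$ gives $\E(\sup_m f_m) \le H_\mu(\alpha)+C<\infty$, using finiteness of $\alpha$ crucially. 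Finally, in the ergodic case $\cI$ is $\mu$-trivial, so $\E(f_\infty|\cI)$ is $\mu$-almost everywhere equal to the constant $\int f_\infty\, d\mu = h_\mu(T,\alpha|\xi)$, which yields the last assertion of the theorem.
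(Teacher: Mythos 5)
The paper cites this result to Downarowicz's monograph and does not include a proof of its own, but your argument is correct and is essentially the standard Breiman-style proof that the cited reference gives: the chain rule plus the shift identity for conditional measures reduce the problem to the Ces\`aro average $\frac1n\sum_{k=0}^{n-1}f_{n-1-k}\circ T^k$ with $f_m=I_\mu(\alpha|\alpha_1^m\vee\xi)$, L\'evy's upward martingale theorem gives $f_m\to f_\infty$ a.s., the Chung--Neveu maximal inequality gives $\sup_m f_m\in L^1$, a Maker/Breiman-type convergence lemma then yields $\E(f_\infty|\cI)$ a.e.\ and in $L^1$, and the identification $\int f_\infty\,d\mu=h_\mu(T,\alpha|\xi)$ follows from the Ces\`aro convergence of the monotone sequence $H_\mu(\alpha|\alpha_1^m\vee\xi)$. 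Two small remarks: the integrability step is the Chung--Neveu inequality, proved by a stopping-time argument on the nonnegative martingale $m\mapsto\E(\mathbf 1_A|\cB(\alpha_1^m\vee\xi))$ rather than by invoking Doob's maximal inequality directly (which controls the supremum, whereas here one needs to control how small the martingale can get on $A$); and Maker's theorem in the form you want must be checked for the ``reversed'' sum $\sum_k f_{n-1-k}(T^kx)$ -- the usual split into the bulk $k\le n-N$ (where $f_{n-1-k}$ is close to $f_\infty$) and the tail of length $N$ (dominated by $\sup_m f_m$) handles it -- but neither of these is a gap, only a matter of citing the exact lemma.
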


Let $T:X\to X$ and $S:Y\to Y$ be two TDSs and $\pi:X\to Y$ a factor map.
We proceed by defining the conditional metric entropy of $\mu\in \M_T(X)$ with respect to $(Y,S)$. Let $\epsilon_Y$ denote the partition of $Y$ into points and $\BB(Y)$ the Borel $\sigma$-algebra of $Y$. Then in the Lebesgue space $(X,\mu)$, the measurable partition $\pi^{-1}\epsilon_Y$ generates the $\sigma$-algebra $\pi^{-1}\BB(Y)$ and it is $T$-invariant. We simply denote $\mu_x:=\mu_x^{\pi^{-1}\epsilon_Y}$ and also $\mu_y:=\mu_x$ if $\pi x=y$, for $\mu \ae x$. Given $\a\in \cP_X$, define the conditional entropy of $\a$ with respect to $(Y,S)$ as
$$h_\mu(T,\a|Y):=h_\mu(T,\a|\pi^{-1}\epsilon_Y)=\lim_{n\to \infty}\frac{1}{n}H_\mu(\a_0^{n-1}|\pi^{-1}\epsilon_Y).$$
The above limit exists since the sequence $b_n=H_\mu(\a_0^{n-1}|\pi^{-1}\epsilon_Y)$ is subadditive. Moreover, the \emph{conditional metric entropy of $(X,T,\mu)$ with respect to $(Y,S)$} is defined by
$$h_\mu(T,X|Y):=\sup_{\a\in \cP_X} h_\mu(T,\a|Y).$$
Applying SMB Theorem \ref{smb0} for $\xi=\pi^{-1}\e_Y$, we can write
$$
h_\mu(T,\a|Y,x):=\lim_{n\rightarrow\infty}\frac{1}{n}I_{\mu}(\alpha_0^{n-1}|\pi^{-1}\e_Y)(x),
$$
which exists for $\mu \ae x\in X$ and satisfies that $\int_X h_\mu(T,\a|Y,x) d\mu(x)=h_\mu(T,\a|Y).$

We have the following classical conditional variational principle by Ledrappier and Walters.
\begin{theorem}[\cite{LW,DS}]\label{conditionalVP}
Let $T: (X, d)\to (X,d)$ and $S:Y\to Y$ be two TDSs and $\pi:X\to Y$ a factor map. Then
 for any $\nu\in \mathcal{M}_S(Y)$ and any potential $\vp \in C(X, \RR)$,
\begin{equation*}
\begin{aligned}
\sup_{\mu\in \mathcal{M}_T(X)} \Big\{h_\mu(T, X|Y)+\int_X \vp(x) d\mu(x): \pi \mu=\nu\Big\}=\int_Y P(T, \vp, y)d\nu(y).
\end{aligned}
\end{equation*}
and
\begin{equation*}
\begin{aligned}
\sup_{\mu\in \mathcal{M}_T(X)} \Big\{h_\mu(T, X|Y)+\int_X \vp(x) d\mu(x)\Big\}=P(T, \vp|Y).
\end{aligned}
\end{equation*}
\end{theorem}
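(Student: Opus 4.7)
The plan is to prove the fiberwise identity (Part 1) directly and then deduce Part 2. For Part 2, after applying Part 1 one obtains
\[
\sup_{\mu\in\M_T(X)}\{h_\mu(T,X|Y)+\int\vp d\mu\} = \sup_{\nu\in\M_S(Y)}\int_Y P(T,\vp,y)d\nu(y),
\]
and one then verifies this common value equals $P(T,\vp|Y)$: the $\le$ direction is immediate from $P(T,\vp,y)\le P(T,\vp|Y)$, while $\ge$ selects for each $\e>0$ a point $y_\e\in Y$ with $P(d,T,\vp,\e,y_\e)$ nearly equal to $P(d,T,\vp,\e|Y)$ and takes $\nu$ to be any weak-$*$ cluster point of the orbital averages $\tfrac1n\sum_{k=0}^{n-1}\delta_{S^ky_\e}$, which lies in $\M_S(Y)$.

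For the upper bound in Part 1, fix $\mu\in\M_T(X)$ with $\pi\mu=\nu$, disintegrate $d\mu(x)=d\mu_y(x)\,d\nu(y)$ along $\pi^{-1}\e_Y$, and choose a Borel partition $\a$ of $X$ with $\diam\a<\e$. Conditioning gives
\[
H_\mu(\a_0^{n-1}\mid \pi^{-1}\BB(Y)) + \int_X S_n\vp\,d\mu = \int_Y \Bigl(H_{\mu_y}(\a_0^{n-1}) + \int S_n\vp\,d\mu_y\Bigr)d\nu(y).
\]
The elementary log-sum-exp inequality $\sum_i p_i(-\log p_i + a_i)\le \log\sum_i e^{a_i}$ (tight when $p_i\propto e^{a_i}$), applied with $p_A=\mu_y(A)$ and $a_A=\sup_A S_n\vp$ over $A\in\a_0^{n-1}$ meeting $\pi^{-1}y$, bounds the bracketed integrand by $\log\sum_A \sup_A e^{S_n\vp}$, which in turn is at most $\log p_n(d,T,\vp,\cU,\pi^{-1}y)+n\,\omega_\vp(\e)$ for any open cover $\cU$ refined by $\a$. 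Dividing by $n$, integrating against $\nu$, letting $n\to\infty$ via reverse Fatou, and then $\diam\cU\to 0$ and $\e\to 0$ produces $h_\mu(T,\a|Y)+\int\vp d\mu \le \int_Y P(T,\vp,y)d\nu(y)$; taking the supremum over $\a$ completes this direction.

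For the lower bound, I would adapt Misiurewicz's Gibbs construction to the fiber setting. By Kuratowski--Ryll-Nardzewski measurable selection applied to the multi-valued map $y\mapsto\{\text{near-optimal }(n,\e)\text{-separated subsets of }\pi^{-1}y\}$, pick $E_n(y)\subset\pi^{-1}y$ with $Z_n(y):=\sum_{x\in E_n(y)} e^{S_n\vp(x)}\ge \tfrac12 P_n(d,T,\vp,\e,\pi^{-1}y)$. Define fiber Gibbs measures $\sigma_n^y := Z_n(y)^{-1}\sum_{x\in E_n(y)} e^{S_n\vp(x)}\delta_x$, set $m_n=\int_Y\sigma_n^y\,d\nu(y)$, and symmetrize $\mu_n=\tfrac1n\sum_{k=0}^{n-1}T^k_*m_n$. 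Any weak-$*$ cluster point $\mu$ of $\{\mu_n\}$ lies in $\M_T(X)$, and $\pi\mu=\nu$ because $\pi_*m_n=\nu$ and $\nu$ is $S$-invariant. For a Borel partition $\a$ with $\diam\a<\e/2$ and $\mu$-null boundaries, the same elementary inequality (now used in its tight direction on the Gibbs weights $\sigma_n^y$) combined with a Jacobs-lemma Cesaro averaging argument yields
\[
h_\mu(T,\a|Y)+\int\vp\,d\mu \ge \limsup_{n\to\infty}\tfrac1n\int_Y\log Z_n(y)\,d\nu(y)-o_\e(1),
\]
which by construction of $E_n(y)$ dominates $\int_Y P(T,\vp,y)d\nu(y)-o_\e(1)$; letting $\e\to0$ finishes the lower bound.

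The main obstacle is confined to the lower bound and is twofold: (a) ensuring the measurable selection of the $E_n(y)$ can be carried out so that $m_n$ is a genuine Borel probability measure on $X$ projecting to $\nu$, which uses the Polish structure of the fibers and standard selection theorems; and (b) the Jacobs-type passage from the fiberwise Gibbs weights $\log Z_n(y)$ to the conditional entropy $h_\mu(T,\a|Y)$ of the limit measure $\mu$, since the conditioning $\sigma$-algebra depends on $\mu$ itself and the partition entropy of the discrete approximants $\mu_n$ must be compared to that of their weak-$*$ limit. Both points are handled in Misiurewicz's non-relative argument and adapted to the present setting in \cite{LW,DS} by running the same estimates fiberwise and exploiting the $S$-invariance of $\nu$.
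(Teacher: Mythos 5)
The paper does not supply a proof of Theorem~\ref{conditionalVP}; it is cited directly from Ledrappier--Walters \cite{LW} and Downarowicz--Serafin \cite{DS} and used as a black box. So there is no internal argument to compare against, and I will instead assess your sketch on its own terms.

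Your treatment of Part~1 is essentially the Ledrappier--Walters strategy and looks sound in outline: the upper bound via disintegration, the log-sum-exp inequality, and reverse Fatou is correct, and the lower bound via a fiberwise Misiurewicz construction (measurable selection of near-optimal separated sets $E_n(y)$, fiber Gibbs measures $\sigma_n^y$, averaging, and a Jacobs-type passage to a limit) is the right approach; you correctly identify the measurable-selection and Cesaro-limit steps as the delicate points.

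However, your deduction of Part~2 from Part~1 has a genuine gap. You claim that the supremum $\sup_{\nu\in\M_S(Y)}\int_Y P(T,\vp,y)\,d\nu(y)$ dominates $P(T,\vp|Y)$ because one can choose a point $y_\e$ with large fiber pressure and then take $\nu$ to be a weak-$*$ cluster point of the orbital averages $\tfrac1n\sum_{k=0}^{n-1}\delta_{S^k y_\e}$. But the fiber pressure $y\mapsto P(T,\vp,y)$ is merely a Borel, $S$-invariant function with no a priori continuity, so there is no reason that $\int_Y P(T,\vp,y)\,d\nu(y)$ should be close to $P(T,\vp,y_\e)$: the cluster measure $\nu$ is supported on the $\omega$-limit set of $y_\e$, which need not contain $y_\e$ itself, and an $S$-invariant function may take a much smaller value $\nu$-a.e.\ than it does on the orbit of $y_\e$. (The $S$-invariance only forces $P(T,\vp,S^k y_\e)=P(T,\vp,y_\e)$ along the orbit, not in the closure.) This is precisely the difficulty that forces \cite{LW} to prove the second identity directly (or that is handled in the local setting by results such as \cite[Lemma 4.7]{MaChen}, which the present paper invokes: $P(d,T,\vp,\cU|Y)=\max_{\nu\in \M_S(Y)}\int P(d,T,\vp,\cU,y)\,d\nu(y)$ for a \emph{fixed} open cover $\cU$). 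A correct route is either to establish an upper-semicontinuity statement at the level of a fixed open cover $\cU$, where $y\mapsto p_n(d,T,\vp,\cU,\pi^{-1}y)$ is upper semicontinuous, and then carefully interchange limits, or to bypass Part~1 entirely and run the Misiurewicz construction with $y_n$ chosen to near-maximize $P_n(d,T,\vp,\e,\pi^{-1}y)$, using subadditivity to control $\tfrac1n\log\sup_y p_n$ directly. As written, the passage from Part~1 to Part~2 does not close.
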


The \emph{local conditional metric entropy with respect to $\cU\in \cC_X^o$} is introduced in \cite{HYZ} as
$$h_\mu(T,\cU|Y):=\inf_{\a\in \cP_X, \a\succeq \cU}h_\mu(T,\a|Y).$$
Let $\mu=\int_{\mathcal{\cE}_T(X)}\nu d\tau(\nu)$ be the unique ergodic decomposition
where $\tau$ is a probability measure on the Borel subsets of $\mathcal{M}_T(X)$ and $\tau(\mathcal{E}_T(X))=1$.
Then by Theorem 5.3 in \cite{HYZ},
\begin{equation*}\label{e:ergodicdecom}
h_\mu(T,\cU|Y)+\int_X \varphi d\mu=\int_{\mathcal{E}_T(X)}\Big(h_\nu(T,\cU|Y)+\int_X \varphi d\nu\Big) d\tau(\nu).
\end{equation*}
Combining it with Theorem 1.3 in \cite{MaChen}, we have the following local variational principle  which plays a key role in our proof below.
\begin{theorem} \cite[Theorem 1.3]{MaChen}\label{LVP}
Let $\pi:(X, T)\to (Y,S)$ be a factor map between two TDSs and $\cU\in \cC_X^o$. Then
\begin{equation*}
\begin{aligned}
P(T,\vp,\cU|Y)=\max_{\mu\in \M_T(X)}\left\{h_\mu(T,\cU|Y)+\int \vp d\mu\right\}.
\end{aligned}
\end{equation*}
Moreover, $\M_T(X)$ in the above equality can be replaced by $\cE_T(X)$.
\end{theorem}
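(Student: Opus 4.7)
The plan is to establish the two inequalities separately via a Misiurewicz-style construction adapted to the relative setting, and then promote ``$\max_{\M_T(X)}$'' to ``$\max_{\cE_T(X)}$'' using the ergodic-decomposition identity for $h_\mu(T,\cU|Y)+\int\vp\,d\mu$ recalled immediately above the statement. For the easier direction $P(T,\vp,\cU|Y)\ge h_\mu(T,\cU|Y)+\int\vp\,d\mu$, I would fix $\mu\in\M_T(X)$ and any $\a\succeq\cU$, set $\nu=\pi_*\mu$, and disintegrate $\mu=\int\mu_y\,d\nu(y)$ along $\pi$. Then
\[
H_\mu(\a_0^{n-1}|\pi^{-1}\BB(Y))+\int S_n\vp\,d\mu=\int_Y\Bigl(H_{\mu_y}(\a_0^{n-1})+\int S_n\vp\,d\mu_y\Bigr)d\nu(y),
\]
and on each fiber the elementary Gibbs inequality $\sum p_A(-\log p_A+a_A)\le\log\sum e^{a_A}$, together with a standard Bowen--Walters reduction of an arbitrary $\a\succeq\cU$ to a partition built by well-ordering $\cU$, bounds the integrand by $\log p_n(d,T,\vp,\cU,\pi^{-1}y)$ up to errors controlled by the modulus of continuity of $\vp$. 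Dividing by $n$, letting $n\to\infty$, and taking $\inf_{\a\succeq\cU}$ yields the desired bound.

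The converse $P(T,\vp,\cU|Y)\le\max_\mu[h_\mu(T,\cU|Y)+\int\vp\,d\mu]$ is the main obstacle. For each $n$, I would select $y_n\in Y$ with $\log p_n(d,T,\vp,\cU,\pi^{-1}y_n)\ge\log p_n(d,T,\vp,\cU|Y)-o(n)$ and a near-optimal subcover $\mathcal{V}_n\subset\cU_0^{n-1}$ covering $\pi^{-1}y_n$. Refining $\mathcal{V}_n$ to a partition $\b_n$ of $\pi^{-1}y_n$ with $\b_n\succeq\cU_0^{n-1}|_{\pi^{-1}y_n}$ and $|\b_n|\le|\mathcal{V}_n|$, choose maximizers $x_B\in B$ with $S_n\vp(x_B)=\sup_B S_n\vp$, and form the Gibbs-type measures
\[
\sigma_n=\frac{1}{Z_n}\sum_{B\in\b_n}e^{S_n\vp(x_B)}\delta_{x_B},\qquad Z_n=\sum_{B\in\b_n}e^{S_n\vp(x_B)},
\]
so that $H_{\sigma_n}(\b_n)+\int S_n\vp\,d\sigma_n=\log Z_n\ge \log p_n(d,T,\vp,\cU|Y)-o(n)$. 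Averaging $\mu_n:=\frac{1}{n}\sum_{i=0}^{n-1}T^i_*\sigma_n$ and extracting a weak-$*$ subsequential limit $\mu\in\M_T(X)$ reduces the task to showing $h_\mu(T,\cU|Y)+\int\vp\,d\mu\ge P(T,\vp,\cU|Y)$.

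For the entropy transfer I would fix a partition $\a\succeq\cU$ whose elements have $\mu$-null boundaries (available since $X$ is metrizable) and apply Misiurewicz's concavity/subadditivity trick: for $q\in\N$ dividing $n$,
\[
\frac{1}{n}H_{\mu_n}(\a_0^{q-1}|\pi^{-1}\BB(Y))\ge\frac{1}{q}\cdot\frac{\log Z_n}{n}-O\bigl(\tfrac{q\log|\a|}{n}\bigr),
\]
after which sending $n\to\infty$ along the subsequence and then $q\to\infty$ should give $h_\mu(T,\a|Y)+\int\vp\,d\mu\ge P(T,\vp,\cU|Y)$, and $\inf_{\a\succeq\cU}$ finishes the argument. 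The hard part will be passing the conditional entropy through the weak-$*$ limit while the projection $\pi_*\mu_n$ itself is moving: $\mu\mapsto H_\mu(\a_0^{q-1}|\pi^{-1}\BB(Y))$ is only upper semicontinuous in general, and the natural compatibility between the fiber measures $(\mu_n)_y$ and $\mu_y$ must be extracted along the chosen subsequence; one likely needs the relative semicontinuity lemmas of \cite{HYZ} used already for the ergodic-decomposition identity. Once this is in place, that same identity linearly disintegrates $h_\mu(T,\cU|Y)+\int\vp\,d\mu$ over $\cE_T(X)$, forcing the ``$\max$'' to be attained on an ergodic component and yielding the final assertion.
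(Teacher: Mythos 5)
The paper supplies no proof of Theorem \ref{LVP}: the main equality is quoted from \cite[Theorem 1.3]{MaChen}, and the passage from $\M_T(X)$ to $\cE_T(X)$ follows by combining that citation with the ergodic-decomposition identity from \cite[Theorem 5.3]{HYZ} displayed just above the statement. So there is no internal proof to compare your argument against; you were, unknowingly, reproving a black-boxed result.

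Your Misiurewicz-style sketch nevertheless has genuine gaps in both directions. For the ``easy'' inequality, the Gibbs argument with a single partition $\a\succeq\cU$ built by well-ordering $\cU$ does not close: Gibbs bounds $H_{\mu_y}(\a_0^{n-1})+\int S_n\vp\,d\mu_y$ above by $\log\sum_{V\in\cV}\sup_V e^{S_n\vp}$, where $\cV\subset\cU_0^{n-1}$ is the subcover induced by the nonempty atoms of $\a_0^{n-1}$ on the fiber, but $p_n(d,T,\vp,\cU,\pi^{-1}y)$ is the \emph{infimum} over subcovers, so you have bounded by something potentially larger than $p_n$, not smaller. What is actually needed is a Romagnoli/Huang--Ye type identity $\inf_{\a\succeq\cU}h_\mu(T,\a|Y)=\lim_{n}\tfrac{1}{n}\inf_{\b\succeq\cU_0^{n-1}}H_\mu(\b|\pi^{-1}\BB(Y))$, allowing the partition to depend on $n$; this is a nontrivial theorem in its own right (its relative form is part of the \cite{HYZ} machinery), not a routine reduction. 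For the hard direction, the obstruction you flagged is fatal as stated: $\mu\mapsto H_\mu(\a_0^{q-1}|\pi^{-1}\BB(Y))$ is a difference of joint entropies and is neither upper nor lower semicontinuous under weak-$*$ convergence, while your $\pi_*\mu_n$ is a Ces\`aro average of point masses along a moving orbit $\{S^i y_n\}$, so no choice of null-boundary $\a$ rescues the limit passage. The arguments of \cite{HYZ} and \cite{MaChen} do not get around this via a semicontinuity lemma for the conditional entropy functional; they reorganize the harder direction so that a combinatorial covering quantity, rather than the conditional entropy itself, is what is carried through the limit. Without that reorganization the sketch does not yield a proof.
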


\subsection{Variational principle via entropy of partitions}
Our first variational principle for relative metric mean dimension with potential is as follows.
\begin{theorem}\label{VPpartition}
Let $\pi:(X, d, T)\to (Y,S)$ be a factor map between two TDSs. Then
for any potential $\vp \in C(X, \RR)$,
\begin{equation*}
\begin{aligned}
&\overline{\mdim}_M(X,d,T,\vp|Y)\\
=&\limsup_{\e \to 0}\frac{1}{\log\frac{1}{\e}}\max_{\mu\in \M_T(X)}\inf_{\diam \a\le \e}\left(h_\mu(T,\a|Y)+\log\frac{1}{\e}\int \vp d\mu\right)
\end{aligned}
\end{equation*}
and
\begin{equation*}
\begin{aligned}
&\underline{\mdim}_M(X,d,T,\vp|Y)\\
=&\liminf_{\e \to 0}\frac{1}{\log\frac{1}{\e}}\max_{\mu\in \M_T(X)}\inf_{\diam \a\le \e}\left(h_\mu(T,\a|Y)+\log\frac{1}{\e}\int \vp d\mu\right).
\end{aligned}
\end{equation*}
Moreover, both $\M_T(X)$ in the above equalities can be replaced by $\cE_T(X)$.
\end{theorem}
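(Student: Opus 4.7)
The plan is to sandwich $P(d,T,\vp\log\tfrac{1}{\e},\e|Y)$ between two instances of the right-hand expression evaluated at scales comparable to $\e$, so that the scale discrepancy is absorbed after dividing by $\log\tfrac{1}{\e}$ and passing to $\limsup$ (respectively $\liminf$). Abbreviate $h_\mu^\e:=\inf_{\diam \a\le \e}h_\mu(T,\a|Y)$. The bridge between the cover-entropy $h_\mu(T,\cU|Y)=\inf_{\a\succeq \cU}h_\mu(T,\a|Y)$ furnished by Theorem~\ref{LVP} and the partition-entropy $h_\mu^\e$ comes from two elementary observations: (a) if $\diam \cU\le \e$, then every $\a\succeq \cU$ automatically satisfies $\diam \a\le \e$, so $h_\mu(T,\cU|Y)\ge h_\mu^\e$; and (b) if $Leb \cU\ge \e$, then every partition $\a$ with $\diam \a\le \e$ automatically refines $\cU$, so $h_\mu(T,\cU|Y)\le h_\mu^\e$.

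To use both inequalities at once I would fix, for each small $\e>0$, an open cover $\cU$ enjoying \emph{both} $\diam \cU\le \e$ and $Leb \cU\ge \e/4$, constructed by taking a maximal $(\e/4)$-separated set $\{x_1,\ldots,x_k\}\subset X$ and setting $\cU=\{B(x_i,\e/2)\}_{i=1}^{k}$. Feeding this $\cU$ into Theorem~\ref{LVP} and Lemma~\ref{coincide}, bridge~(b) combined with Lemma~\ref{coincide}(1) yields the upper estimate
\begin{equation*}
P(d,T,\vp\log\tfrac{1}{\e},\e|Y)\le P(d,T,\vp\log\tfrac{1}{\e},\cU|Y)\le \max_\mu\Big[h_\mu^{\e/4}+\log\tfrac{1}{\e}\int\vp\,d\mu\Big],
\end{equation*}
while bridge~(a) combined with Lemma~\ref{coincide}(2) (applied with $\d=Leb \cU\ge \e/4$, whence $\d/2\ge \e/8$) yields the lower estimate
\begin{equation*}
\max_\mu\Big[h_\mu^{\e}+\log\tfrac{1}{\e}\int\vp\,d\mu\Big]\le P(d,T,\vp\log\tfrac{1}{\e},\cU|Y)\le \log\tfrac{1}{\e}\,\tau_\cU(\vp)+P(d,T,\vp\log\tfrac{1}{\e},\e/8|Y).
\end{equation*}

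Dividing both displays by $\log\tfrac{1}{\e}$ and letting $\e\to 0$, every spurious term vanishes: $\tau_\cU(\vp)\to 0$ by uniform continuity of $\vp$ on the compact space $X$; the potential mismatch between $\vp\log\tfrac{1}{\e}$ and $\vp\log\tfrac{8}{\e}$ costs at most $\|\vp\|\log 8$ by~\eqref{e:difference}, which is $o(\log\tfrac{1}{\e})$; and the scale factors $\log\tfrac{1}{\e}/\log\tfrac{c}{\e}$ tend to $1$ for any fixed $c>0$. Combining the two directions gives the $\overline{\mdim}_M$ identity, and the verbatim argument with $\liminf$ in place of $\limsup$ supplies the $\underline{\mdim}_M$ companion. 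Since Theorem~\ref{LVP} permits $\cE_T(X)$ in place of $\M_T(X)$, rerunning the argument with the ergodic local variational principle establishes the final clause. The main subtlety is the second bridge inequality: because $Leb \cU$ and $\diam \cU$ are in general unrelated, the carefully-chosen ball cover (for which the two are comparable) is what lets the scales $\e/4$ and $\e/8$---rather than some uncontrolled smaller quantity---appear on the right-hand sides of the sandwich.
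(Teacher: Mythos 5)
Your proposal is correct and follows essentially the same route as the paper's proof: both fix a cover $\cU_\e$ with $\diam\cU_\e\le\e$ and $Leb\,\cU_\e\ge\e/4$ (the paper cites Lemma~3.4 of \cite{GS}, you construct it directly from a maximal $\e/4$-separated set, which is the same object), both feed it into the relative local variational principle of Theorem~\ref{LVP}, both sandwich via Lemma~\ref{coincide}(1)--(2) and the two elementary relations between $\inf_{\a\succeq\cU}$ and $\inf_{\diam\a\le\e}$, and both absorb $\tau_\cU\to 0$, the $\|\vp\|\log C$ discrepancy, and the $\log\frac{1}{\e}/\log\frac{C}{\e}\to 1$ scale mismatch in the final limit. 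The only point you gloss over is that your sandwich inequalities justify $\sup_\mu$ on the partition side rather than $\max_\mu$ directly; as in the paper, the ``$\max$'' in the statement is inherited from the fact that Theorem~\ref{LVP} delivers a maximizing measure and the resulting chain of inequalities collapses, so this is a cosmetic omission rather than a gap.
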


\begin{proof}
Given $\e>0$, choose $\cU=\cU_\e\in \cC^o_X$ such that $\diam \cU \le \e$ and $Leb \cU\ge \e/4$ (see Lemma 3.4 in \cite{GS}). By Theorem \ref{LVP}, (2) of Lemma \ref{coincide} and \eqref{e:difference}
\begin{equation*}
\begin{aligned}
&\sup_{\mu\in \M_T(X)}\inf_{\diam \a\le \e}(h_\mu(T,\a|Y)+\log\frac{1}{\e}\int \vp d\mu)\\
\le &\max_{\mu\in \M_T(X)}\inf_{\a\succeq \cU}(h_\mu(T,\a|Y)+\log\frac{1}{\e}\int \vp d\mu)\\
=&P(T,\vp\log\frac{1}{\e},\cU|Y)\\
=&\lim_{n\to \infty}\frac{1}{n}\log \sup_{y\in Y}p_n(d, T, \varphi\log\frac{1}{\e}, \cU, \pi^{-1}y)\\
\le & \lim_{n\to \infty}\frac{1}{n}\log \sup_{y\in Y}(1/\e)^{n\tau_\cU}P_n(d, T, \varphi\log\frac{1}{\e}, Leb\cU/2, \pi^{-1}y)\\
\le & \lim_{n\to \infty}\frac{1}{n}\log \sup_{y\in Y}(1/\e)^{n\tau_\cU}P_n(d, T, \varphi\log\frac{1}{\e}, \e/8, \pi^{-1}y)\\
=&P(d, T, \varphi\log\frac{8}{\e}, \epsilon/8|Y)+\|\vp\|\log 8+\tau_\cU\log\frac{1}{\e}.
\end{aligned}
\end{equation*}
On the other hand, by (1) of Lemma \ref{coincide} and the ``moreover'' part of Theorem \ref{LVP} we have
\begin{equation*}
\begin{aligned}
&P(d, T, \varphi\log\frac{1}{\e}, \epsilon|Y)\\
= & \lim_{n\to \infty}\frac{1}{n}\log \sup_{y\in Y}P_n(d, T, \varphi\log\frac{1}{\e}, \e, \pi^{-1}y)\\
\le&\lim_{n\to \infty}\frac{1}{n}\log \sup_{y\in Y}p_n(d, T, \varphi\log\frac{1}{\e}, \cU, \pi^{-1}y)\\
=&P(T,\log\frac{1}{\e}\vp,\cU|Y)\\
= &\max_{\mu\in \cE_T(X)}\inf_{\a\succeq \cU}\left(h_\mu(T,\a|Y)+\log\frac{1}{\e}\int \vp d\mu\right)\\
\le &\sup_{\mu\in \cE_T(X)}\inf_{\diam \a\le \e/8}\left(h_\mu(T,\a|Y)+\log\frac{1}{\e}\int \vp d\mu\right)
\end{aligned}
\end{equation*}
where in the last equality we used the fact that if $\diam \a\le \e/8\le Leb \cU$, then $\a\succeq \cU$.
Note that as $\e\to 0$, $\tau_\cU\to 0$, and $\lim_{\e\to 0}\frac{\log\frac{1}{\e}}{\log\frac{C}{\e}}$ for any $C>0$. Dividing both sides by $\log\frac{1}{\e}$ and taking the limit as $\e\to 0$, we obtain the variational principle with ``$\sup$'', which becomes a ``$\max$'' by the very last inequality above.
\end{proof}

\subsection{Variational principle via Shapira's entropy}
For $\nu\in \cM(X)$, $\cU\in \cC_X$ and $0<\rho<1$, let $N_\nu(\cU,\rho)$ denote the minimal number of elements of $\cU$, needed to cover a subset of $X$ whose $\nu$-measure is at least $1-\rho$.
Let $\mu\in \cE_T(X)$ and $\cU\in \cC^o_X$. Define
\begin{equation*}
\begin{aligned}
\overline{h}^{S}_{\mu}(T,\cU|Y):=\lim_{\rho\to 0}\overline{h}^{S}_{\mu}(T,\cU,\rho|Y):=&\lim_{\rho\to 0}\limsup_{n\to \infty}\frac{1}{n}\int\log N_{\mu_x}(\cU^{n-1}_0,\rho)d\mu(x),\\
\underline{h}^{S}_{\mu}(T,\cU|Y):=\lim_{\rho\to 0}\underline{h}^{S}_{\mu}(T,\cU,\rho|Y):=&\lim_{\rho\to 0}\liminf_{n\to \infty}\frac{1}{n}\int\log N_{\mu_x}(\cU^{n-1}_0,\rho)d\mu(x).
\end{aligned}
\end{equation*}

\begin{proposition}\label{shapirathm}
Let $\pi: (X, T)\to (Y,S)$ be a factor map between two TDSs, $\mu\in \cE_T(X)$ and $\cU\in \cC_X^o$. Then
$$\overline{h}^{S}_{\mu}(T,\cU|Y)=\underline{h}^{S}_{\mu}(T,\cU |Y)=h_\mu(T,\cU|Y).$$
\end{proposition}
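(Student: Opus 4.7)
The plan is to establish both
\[
\overline{h}^S_\mu(T,\cU|Y)\le h_\mu(T,\cU|Y)\le \underline{h}^S_\mu(T,\cU|Y),
\]
since $\underline{h}^S_\mu(T,\cU|Y)\le \overline{h}^S_\mu(T,\cU|Y)$ is immediate from the definitions. Each direction is a relative analogue of Shapira's original argument, with the canonical system of conditional measures $\{\mu_x\}$ of $\mu$ along the measurable partition $\pi^{-1}\e_Y$ playing the role that $\mu$ itself plays in the absolute case.

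For the upper bound, I would fix a partition $\a\in\cP_X$ with $\a\succeq\cU$ and apply the conditional Shannon--McMillan--Breiman theorem (Theorem \ref{smb0}) with respect to the $T$-invariant partition $\pi^{-1}\e_Y$. Ergodicity of $\mu$ yields, for every $\ep>0$ and $\mu$-a.e.\ $x$, the existence of $n_0(x)$ such that for $n\ge n_0(x)$ the atoms $A\in\a_0^{n-1}$ with $\mu_x(A)\ge e^{-n(h_\mu(T,\a|Y)+\ep)}$ cover $\mu_x$-mass at least $1-\rho$; there are at most $e^{n(h_\mu(T,\a|Y)+\ep)}$ such atoms, and each lies in an element of $\cU_0^{n-1}$ because $\a\succeq\cU$. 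This gives the pointwise bound $\frac{1}{n}\log N_{\mu_x}(\cU_0^{n-1},\rho)\le h_\mu(T,\a|Y)+\ep$ eventually. The uniform estimate $N_{\mu_x}(\cU_0^{n-1},\rho)\le|\cU|^n$ enables a reverse Fatou argument to pull the $\limsup$ inside the integral, and then taking the infimum over $\a\succeq\cU$ followed by $\rho\to 0$ closes this direction.

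The lower bound is the harder half. For each $n$ I plan to construct a measurable partition $\a_n$ of $X$ with $\a_n\succeq\cU_0^{n-1}$ whose conditional entropy is controlled by $\int\log N_{\mu_x}(\cU_0^{n-1},\rho)\,d\mu(x)$. Via a Kuratowski--Ryll-Nardzewski measurable selection, one picks for each $x$ an optimal subfamily $\cV_n(x)\subset\cU_0^{n-1}$ of size $N_{\mu_x}(\cU_0^{n-1},\rho)$ covering $\mu_x$-mass $\ge 1-\rho$, depending measurably on $x$; disjointifying these and adjoining the complement produces $\a_n$, whose restriction to the fiber through $x$ has at most $N_{\mu_x}(\cU_0^{n-1},\rho)+1$ atoms. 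The trivial fiberwise bound $H_{\mu_x}(\a_n)\le\log(N_{\mu_x}(\cU_0^{n-1},\rho)+1)$ integrates to
\[
H_\mu(\a_n|\pi^{-1}\cB(Y))\le \int\log(N_{\mu_x}(\cU_0^{n-1},\rho)+1)\,d\mu(x).
\]
Combining this with the relative Romagnoli-type identity $h_\mu(T,\cU|Y)=\lim_n\frac{1}{n}\inf_{\a\succeq\cU_0^{n-1}}H_\mu(\a|\pi^{-1}\cB(Y))$ (a subadditive-limit characterization available in the framework of \cite{HYZ}), dividing by $n$ and sending $n\to\infty$ and then $\rho\to 0$ yields the desired inequality. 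The chief obstacle will be twofold: carrying out the measurable selection cleanly enough that $\a_n$ is a bona fide measurable partition of $X$ refining $\cU_0^{n-1}$ (rather than merely a fiberwise collection), and verifying the relative Romagnoli identity in sufficient generality to bridge the finite-$n$ partition-based estimates to the infimum appearing in the definition of $h_\mu(T,\cU|Y)$.
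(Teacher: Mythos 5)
Your upper bound argument is substantively the same as the paper's: apply the conditional SMB theorem (Theorem~\ref{smb0}) with $\xi=\pi^{-1}\e_Y$, use ergodicity to pick out, for $\nu$-a.e.\ $y$, the set of large-measure atoms of $\a_0^{n-1}$ (with $\a\succeq\cU$), observe that these few atoms already cover $1-\rho$ of the fiber mass, integrate, and take $\d\to 0$ and the infimum over $\a$. The reverse-Fatou comment is a reasonable tidying of the exchange of $\limsup$ and integral, but the substance is identical.

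For the lower bound you propose a genuinely different route. The paper does not invoke any Romagnoli-type identity $h_\mu(T,\cU|Y)=\lim_n\frac1n\inf_{\b\succeq\cU_0^{n-1}}H_\mu(\b|\pi^{-1}\cB(Y))$; instead it builds, via the Strong Rohlin Lemma (Lemma~\ref{Rohlin}), a Rohlin tower of height $n$ whose base $\tilde B$ is compatible (in distribution) with the partition $\b$ coming from Lemma~\ref{partition}, then uses the $n$-block structure of the tower to manufacture a single partition $\a\succeq\cU$ of cardinality only $2M$ (where $M=|\cU|$). The key gain is that the junk outside the tower and the $\rho$-bad set contribute at most $\eta^2k\log(2M)$ (not $\rho k\log(M^n)$), so the error is genuinely small, and one can then invoke SMB for $\a$ at time $k\to\infty$. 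Lemma~\ref{partition} plays the role of your measurable-selection step but does so elementarily, by exploiting that there are only finitely many candidate subfamilies of $\cU_0^{n-1}$ (so finitely many "types" of fibers), which avoids appealing to Kuratowski--Ryll-Nardzewski.

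There are two concrete gaps in your sketch. First, the bound $H_{\mu_x}(\a_n)\le\log\bigl(N_{\mu_x}(\cU_0^{n-1},\rho)+1\bigr)$ is not correct as written: to have $\a_n\succeq\cU_0^{n-1}$ globally, the complement of mass $\rho$ must itself be split into as many as $|\cU|^n$ atoms, not one, so the fiberwise entropy bound should read roughly $(1-\rho)\log N_{\mu_x}+\rho\,n\log|\cU|+H(\rho)+O(1)$, and the $\rho\,n\log|\cU|$ term survives division by $n$ and only disappears after $\rho\to 0$. This is fixable, but as stated the estimate is wrong. Second, the "relative Romagnoli identity" is assumed rather than established; it is not among the facts cited in the paper (only the infimum definition $h_\mu(T,\cU|Y)=\inf_{\a\succeq\cU}h_\mu(T,\a|Y)$ and the ergodic decomposition from \cite{HYZ} are used). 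Until you locate or prove that identity in the relative/conditional setting, the link from $\frac1n\inf_{\b\succeq\cU_0^{n-1}}H_\mu(\b|\pi^{-1}\cB(Y))$ back to $h_\mu(T,\cU|Y)$ is missing. The Rohlin-tower argument in the paper is precisely what sidesteps the need for such an identity.
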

The proof of the proposition adapts the ideas from \cite{Sha} to the relative case which is provided in the appendix.
In the following we write
$$h^{S}_{\mu}(T,\cU|Y):=\overline{h}^{S}_{\mu}(T,\cU|Y)=\underline{h}^{S}_{\mu}(T,\cU |Y).$$

\begin{question}
Let $\mu\in \cE_T(X)$ and $\cU\in \cC_X^o$. Do we have for every $0<\rho<1$,
$$\overline{h}^{S}_{\mu}(T,\cU,\rho|Y)=\underline{h}^{S}_{\mu}(T,\cU,\rho|Y)?$$
\end{question}
\begin{remark}
The answer is positive in the classical case (i.e., when $Y$ is trivial) by Theorem 4.2 in \cite{Sha}, but there are some difficulties to extend the proof (see p.234 in \cite{Sha}) to the relative case. For example, it seems not easy to obtain an estimate analogous to \eqref{e:firstcount} below due to the multiple Rohlin towers.
\end{remark}

\begin{theorem}\label{VPShapira}
Let $T: (X, d)\to (X,d)$ and $S:Y\to Y$ be two TDSs and $\pi:X\to Y$ a factor map. Then
 for any potential $\vp \in C(X, \RR)$,
\begin{equation*}
\begin{aligned}
&\overline{\mdim}_M(X,d,T,\vp|Y)\\
=&\limsup_{\e \to 0}\frac{1}{\log\frac{1}{\e}}\max_{\mu\in \cE_T(X)}\inf_{\diam \cU\le \e}\left(h^S_\mu(T,\cU|Y)+\log\frac{1}{\e}\int \vp d\mu\right)
\end{aligned}
\end{equation*}
and
\begin{equation*}
\begin{aligned}
&\underline{\mdim}_M(X,d,T,\vp|Y)\\
=&\liminf_{\e \to 0}\frac{1}{\log\frac{1}{\e}}\max_{\mu\in \cE_T(X)}\inf_{\diam \cU\le \e}\left(h^S_\mu(T,\cU|Y)+\log\frac{1}{\e}\int \vp d\mu\right).
\end{aligned}
\end{equation*}
\end{theorem}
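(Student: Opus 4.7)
The plan is to run the argument of Theorem \ref{VPpartition} with open covers in place of partitions, using Proposition \ref{shapirathm} as the bridge that identifies $h^S_\mu(T,\cU|Y)$ with the local conditional entropy $h_\mu(T,\cU|Y)$ on $\cE_T(X)$. For each $\e>0$ pick $\cU_\e\in\cC^o_X$ with $\diam\cU_\e\le\e$ and $Leb\,\cU_\e\ge\e/4$ (Lemma 3.4 in \cite{GS}), and abbreviate
$$H(\e):=\max_{\mu\in\cE_T(X)}\inf_{\diam\cV\le\e}\Big(h^S_\mu(T,\cV|Y)+\log\tfrac{1}{\e}\int\vp\,d\mu\Big).$$

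For the upper bound, specialise $\cV=\cU_\e$ in the infimum; Proposition \ref{shapirathm} and Theorem \ref{LVP} yield
$$H(\e)\le\max_{\mu\in\cE_T(X)}\Big(h_\mu(T,\cU_\e|Y)+\log\tfrac{1}{\e}\int\vp\,d\mu\Big)=P(T,\vp\log\tfrac{1}{\e},\cU_\e|Y).$$
Lemma \ref{coincide}(2) and \eqref{e:difference}, applied exactly as in the proof of Theorem \ref{VPpartition}, give $H(\e)\le P(d,T,\vp\log\tfrac{8}{\e},\e/8|Y)+\|\vp\|\log8+\tau_{\cU_\e}\log(1/\e)$. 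Dividing by $\log(1/\e)$ and using $\tau_{\cU_\e}\to0$ (uniform continuity of $\vp$) together with $\log(C/\e)/\log(1/\e)\to1$ for any $C>0$ produces $\limsup_{\e\to0}H(\e)/\log(1/\e)\le\overline{\mdim}_M(X,d,T,\vp|Y)$.

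For the reverse inequality, Lemma \ref{coincide}(1) gives $P(d,T,\vp\log\tfrac{1}{\e},\e|Y)\le P(T,\vp\log\tfrac{1}{\e},\cU_\e|Y)$, which by Theorem \ref{LVP} and Proposition \ref{shapirathm} equals $\max_{\mu\in\cE_T(X)}(h^S_\mu(T,\cU_\e|Y)+\log\tfrac{1}{\e}\int\vp\,d\mu)$. The crucial new step is a monotonicity of Shapira's entropy under refinement: if $\diam\cV\le\e/4\le Leb\,\cU_\e$, then each $V\in\cV$ lies in some element of $\cU_\e$, so $\cV\succeq\cU_\e$ and $\cV^{n-1}_0\succeq(\cU_\e)^{n-1}_0$; any $k$ elements of $\cV^{n-1}_0$ whose union has $\mu_x$-measure at least $1-\rho$ embed into $k$ parent elements of $(\cU_\e)^{n-1}_0$ with the same property, so $N_{\mu_x}((\cU_\e)^{n-1}_0,\rho)\le N_{\mu_x}(\cV^{n-1}_0,\rho)$ and therefore $h^S_\mu(T,\cU_\e|Y)\le\inf_{\diam\cV\le\e/4}h^S_\mu(T,\cV|Y)$. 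Hence
$$P(d,T,\vp\log\tfrac{1}{\e},\e|Y)\le\max_{\mu\in\cE_T(X)}\inf_{\diam\cV\le\e/4}\Big(h^S_\mu(T,\cV|Y)+\log\tfrac{1}{\e}\int\vp\,d\mu\Big),$$
whose right-hand side differs from $H(\e/4)$ by at most $\|\vp\|\log4$ (replacing $\log(1/\e)$ by $\log(4/\e)$ in the integrand). Substituting $\e\mapsto4\e$ and using $\log(4/\e)/\log(1/\e)\to1$ yields $\overline{\mdim}_M(X,d,T,\vp|Y)\le\limsup_{\e\to0}H(\e)/\log(1/\e)$; the $\liminf$ assertion is identical.

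The only genuinely new ingredient over the proof of Theorem \ref{VPpartition} is the monotonicity $N_{\mu_x}((\cU_\e)^{n-1}_0,\rho)\le N_{\mu_x}(\cV^{n-1}_0,\rho)$ for $\cV\succeq\cU_\e$, which is immediate from the parent-element argument above; everything else is a parallel rerun, and the ``max'' (as opposed to ``sup'') in the statement is inherited at each step from the ergodic attainment supplied by Theorem \ref{LVP}.
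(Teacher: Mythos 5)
Your upper bound (the direction $\overline{\mdim}_M \ge$ the Shapira expression) is essentially identical to the paper's: both specialise the infimum to the carefully chosen cover $\cU_\e$, apply Proposition \ref{shapirathm} together with Theorem \ref{LVP}, and conclude via Lemma \ref{coincide}(2) and \eqref{e:difference} exactly as in Theorem \ref{VPpartition}. Your lower bound, however, takes a genuinely different route. The paper observes that $h^S_\mu(T,\cU|Y)=h_\mu(T,\cU|Y)=\inf_{\a\succeq\cU}h_\mu(T,\a|Y)$ by Proposition \ref{shapirathm}, notes that $\diam\cU\le\e$ and $\a\succeq\cU$ together force $\diam\a\le\e$, and so reduces the Shapira expression to the partition expression and simply invokes Theorem \ref{VPpartition}. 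You instead establish the monotonicity of $h^S_\mu(T,\cdot|Y)$ under refinement of covers — the observation that $\cV\succeq\cU$ implies $\cV_0^{n-1}\succeq\cU_0^{n-1}$ and hence $N_{\mu_x}(\cU_0^{n-1},\rho)\le N_{\mu_x}(\cV_0^{n-1},\rho)$, because a cover of a $(1-\rho)$-mass set by $k$ children yields a cover by at most $k$ parents — and use it to pass from the single cover $\cU_\e$ supplied by Theorem \ref{LVP} to the infimum over all covers of small diameter. This argument stays entirely in the language of covers, and does not need to re-route through Theorem \ref{VPpartition}; it is slightly more self-contained and makes transparent that only the one-sided refinement property of $N_\nu(\cdot,\rho)$ is in play. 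Both arguments are correct.

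Two small remarks. First, you write $\diam\cV\le\e/4\le Leb\,\cU_\e$ to conclude $\cV\succeq\cU_\e$; since $Leb\,\cU_\e$ is only guaranteed to be $\ge\e/4$ and the standard convention for the Lebesgue number is a strict inequality on diameters, it is safer to take $\diam\cV\le\e/8$, as the paper does in its proof of Theorem \ref{VPpartition}. Second, your one-line justification that the $\sup$ over $\cE_T(X)$ is actually a $\max$ (``inherited at each step from the ergodic attainment supplied by Theorem \ref{LVP}'') is terse: Theorem \ref{LVP} only supplies a maximiser for the fixed cover $\cU_\e$, not automatically for the quantity $\inf_{\diam\cV\le\e}(h^S_\mu(T,\cV|Y)+\cdots)$. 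The paper's own justification on this point is comparably brief (``which is in fact a `max' by the first half of the proof''), so this is not a defect relative to the paper, but if you want the claim to be airtight you should spell out how the maximiser from the partition variational principle (or from the $\cU_\e$-level LVP) transfers to the infimum over covers.
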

\begin{proof}
At first for any $\e>0$ and $\mu\in \cE_T(X)$, by Proposition \ref{shapirathm}
\begin{equation*}
\begin{aligned}
&\inf_{\diam \cU\le \e}(h^S_\mu(T,\cU|Y)+\log\frac{1}{\e}\int \vp d\mu)\\
= &\inf_{\diam \cU\le \e}(h_\mu(T,\cU|Y)+\log\frac{1}{\e}\int \vp d\mu)\\
=&\inf_{\diam \cU\le \e}\inf_{\a\succeq \cU}(h_\mu(T,\a|Y)+\log\frac{1}{\e}\int \vp d\mu)\\
\ge &\inf_{\diam \a\le \e}(h_\mu(T,\a|Y)+\log\frac{1}{\e}\int \vp d\mu).
\end{aligned}
\end{equation*}

For the other direction, we proceed as in the proof of Theorem \ref{VPpartition}. Given $\e>0$, choose $\cU=\cU_\e\in \cC^o_X$ such that $\diam \cU \le \e$ and $Leb \cU\ge \e/4$ (see Lemma 3.4 in \cite{GS}). By Theorem \ref{LVP}, (2) of Lemma \ref{coincide} and \eqref{e:difference}
\begin{equation*}
\begin{aligned}
&\sup_{\mu\in \cE_T(X)}\inf_{\diam \cU\le \e}(h^S_\mu(T,\cU|Y)+\log\frac{1}{\e}\int \vp d\mu)\\
\le &\sup_{\mu\in \cE_T(X)}(h^S_\mu(T,\cU|Y)+\log\frac{1}{\e}\int \vp d\mu)\\
= &\max_{\mu\in \cE_T(X)}(h_\mu(T,\cU|Y)+\log\frac{1}{\e}\int \vp d\mu)\\
=&P(T,\log\frac{1}{\e}\vp,\cU|Y)\\
\le &P(d, T, \varphi\log\frac{8}{\e}, \epsilon/8|Y)+\|\vp\|\log 8+\tau_\cU\log\frac{1}{\e}.
\end{aligned}
\end{equation*}
Dividing both sides by $\log\frac{1}{\e}$ and taking limit $\e\to 0$, we obtain the equalities with ``$\sup$'', which is in fact a ``$\max$'' by the first half of the proof.
\end{proof}

\subsection{Variational principle via Katok's entropy}
For $\nu\in \cM(X)$, let $N_\nu(n,\e,\rho)$ denote the minimal number of $d_n$-balls with radius $\e$ which cover a subset of $X$ of $\nu$-measure no less than $1-\rho$.
Let $\mu\in \cE_T(X)$ and then define
\begin{equation*}
\begin{aligned}
\overline{h}^{K}_{\mu}(T,\e|Y):=\lim_{\rho\to 0}\overline{h}^{K}_{\mu}(T,\e,\rho|Y):=&\lim_{\rho\to 0} \limsup_{n\to \infty}\frac{1}{n}\int \log N_{\mu_x}(n,\e,\rho)d\mu(x),\\
\underline{h}^{K}_{\mu}(T,\e|Y):=\lim_{\rho\to 0}\underline{h}^{K}_{\mu}(T,\e,\rho|Y):=&\lim_{\rho\to 0} \liminf_{n\to \infty}\frac{1}{n}\int \log N_{\mu_x}(n,\e,\rho)d\mu(x).
\end{aligned}
\end{equation*}
\begin{theorem}\label{VPKatok}
Let $T: (X, d)\to (X,d)$ and $S:Y\to Y$ be two topological dynamical systems and $\pi:X\to Y$ a factor map. Then
 for any potential $\vp \in C(X, \RR)$,
\begin{equation*}
\begin{aligned}
&\overline{\mdim}_M(X,d,T,\vp|Y)\\
=&\limsup_{\e \to 0}\frac{1}{\log\frac{1}{\e}}\max_{\mu\in \cE_T(X)}\left(\overline{h}^{K}_{\mu}(T,\e|Y)+\log\frac{1}{\e}\int \vp d\mu\right)\\
=&\limsup_{\e \to 0}\frac{1}{\log\frac{1}{\e}}\max_{\mu\in \cE_T(X)}\left(\underline{h}^{K}_{\mu}(T,\e|Y)+\log\frac{1}{\e}\int \vp d\mu\right)
\end{aligned}
\end{equation*}
and
\begin{equation*}
\begin{aligned}
&\underline{\mdim}_M(X,d,T,\vp|Y)\\
=&\liminf_{\e \to 0}\frac{1}{\log\frac{1}{\e}}\max_{\mu\in \cE_T(X)}\left(\overline{h}^{K}_{\mu}(T,\e|Y)+\log\frac{1}{\e}\int \vp d\mu\right)\\
=&\liminf_{\e \to 0}\frac{1}{\log\frac{1}{\e}}\max_{\mu\in \cE_T(X)}\left(\underline{h}^{K}_{\mu}(T,\e|Y)+\log\frac{1}{\e}\int \vp d\mu\right).
\end{aligned}
\end{equation*}
\end{theorem}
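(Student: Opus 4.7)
The strategy is to sandwich Katok's local entropy between Shapira's local entropy (equivalently, the local conditional entropy of open covers) at two nearby scales, and then mimic the proof of Theorem \ref{VPShapira} almost verbatim. The key geometric input is that, for any $\mu\in\cE_T(X)$, $\e>0$, $0<\rho<1$, and $\cU\in\cC_X^o$, each atom of $\cU_0^{n-1}$ has $d_n$-diameter at most $\diam\cU$, while any $d_n$-ball of radius $\e$ is contained in some atom of $\cU_0^{n-1}$ provided $Leb\,\cU\ge 2\e$. Hence $N_{\mu_x}(n,\e,\rho)\le N_{\mu_x}(\cU_0^{n-1},\rho)$ when $\diam\cU\le\e$, and $N_{\mu_x}(\cU_0^{n-1},\rho)\le N_{\mu_x}(n,\e,\rho)$ when $Leb\,\cU\ge 2\e$. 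Integrating against $\mu$, dividing by $n$, passing to $n\to\infty$ and then $\rho\to 0$, and invoking Proposition \ref{shapirathm} one obtains
\[
\diam\cU\le\e\ \Longrightarrow\ \overline{h}^K_\mu(T,\e|Y)\le h_\mu(T,\cU|Y),\quad Leb\,\cU\ge 2\e\ \Longrightarrow\ h_\mu(T,\cU|Y)\le\underline{h}^K_\mu(T,\e|Y).
\]

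For the upper bound on the $\overline{h}^K$-expression, choose $\cU=\cU_\e$ as in Lemma 3.4 of \cite{GS} with $\diam\cU\le\e$ and $Leb\,\cU\ge\e/4$. The first implication above, combined with Theorem \ref{LVP} and the chain of estimates from the proof of Theorem \ref{VPShapira} that uses (2) of Lemma \ref{coincide} and \eqref{e:difference}, yields
\[
\sup_{\mu\in\cE_T(X)}\Big(\overline{h}^K_\mu(T,\e|Y)+\log\tfrac{1}{\e}\int\vp\,d\mu\Big)\le P(d,T,\vp\log\tfrac{8}{\e},\e/8|Y)+\|\vp\|\log 8+\tau_\cU\log\tfrac{1}{\e}.
\]
Dividing by $\log(1/\e)$ and passing to $\limsup$ (resp.\ $\liminf$) as $\e\to 0$, using $\tau_\cU\to 0$ and $\log(8/\e)/\log(1/\e)\to 1$, bounds the scaled $\overline{h}^K$-expression above by $\overline{\mdim}_M(X,d,T,\vp|Y)$ (resp.\ $\underline{\mdim}_M(X,d,T,\vp|Y)$).

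For the lower bound on the $\underline{h}^K$-expression, a rescaled application of the same lemma produces $\cU'$ with $\diam\cU'\le 8\e$ and $Leb\,\cU'\ge 2\e$. By (1) of Lemma \ref{coincide}, the second implication above, and the ``moreover'' part of Theorem \ref{LVP},
\[
P(d,T,\vp\log\tfrac{1}{\e},8\e|Y)\le P(T,\vp\log\tfrac{1}{\e},\cU'|Y)=\max_{\mu\in\cE_T(X)}\Big(h_\mu(T,\cU'|Y)+\log\tfrac{1}{\e}\int\vp\,d\mu\Big)
\]
which is $\le\sup_{\mu\in\cE_T(X)}(\underline{h}^K_\mu(T,\e|Y)+\log(1/\e)\int\vp\,d\mu)$. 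The substitution $\e'=8\e$ combined with \eqref{e:difference} shows that dividing the left-hand side by $\log(1/\e)$ and taking $\limsup$ (resp.\ $\liminf$) recovers $\overline{\mdim}_M$ (resp.\ $\underline{\mdim}_M$). Combining the two paragraphs with the trivial inequality $\underline{h}^K_\mu\le\overline{h}^K_\mu$ forces all four scaled expressions to coincide. That each outer sup is actually a max is seen as in the final step of the proof of Theorem \ref{VPpartition}: the measure $\mu^*\in\cE_T(X)$ attaining the max in Theorem \ref{LVP} applied to $\cU'$ also attains the Katok sup at scale $\e$, since $h_{\mu^*}(T,\cU'|Y)\le\underline{h}^K_{\mu^*}(T,\e|Y)\le\overline{h}^K_{\mu^*}(T,\e|Y)$. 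The only nonroutine ingredient is the relative Shapira theorem, Proposition \ref{shapirathm}; once that is available, the rest is a parameter chase running in parallel to the proof of Theorem \ref{VPShapira}.
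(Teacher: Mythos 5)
Your proof follows the same route as the paper: sandwich Katok's entropy between the local conditional entropy $h_\mu(T,\cU|Y)$ (via Proposition \ref{shapirathm}) at two nearby scales, using the inclusions between elements of $\cU_0^{n-1}$ and Bowen balls, and then invoke Theorem \ref{LVP} together with the scale-invariance of $\limsup/\liminf$ under $\e\mapsto C\e$. The paper chooses a single cover $\cU_\e$ with $\diam\cU\le\e$ and $Leb\,\cU\ge\e/4$ and compares with $\underline{h}^K_\mu(T,\e/4|Y)$, whereas you rescale to a cover $\cU'$ with $\diam\cU'\le 8\e$, $Leb\,\cU'\ge 2\e$ and compare with $\underline{h}^K_\mu(T,\e|Y)$; these are the same argument modulo the harmless $\e\mapsto 8\e$ substitution that both proofs then absorb using $\log(C/\e)/\log(1/\e)\to 1$ and \eqref{e:difference}.

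One caveat on your ``max'' justification: you claim that the $\mu^*$ attaining $\max_{\mu}\bigl(h_\mu(T,\cU'|Y)+\log\tfrac{1}{\e}\int\vp\,d\mu\bigr)$ also attains the Katok supremum at scale $\e$, because $h_{\mu^*}(T,\cU'|Y)\le\underline{h}^K_{\mu^*}(T,\e|Y)$. That inequality does hold, but to conclude $\mu^*$ is a maximizer for the Katok expression you would also need $\underline{h}^K_\mu(T,\e|Y)\le h_\mu(T,\cU'|Y)$ for \emph{all} $\mu$, and with $\diam\cU'\le 8\e$ one only gets $\overline{h}^K_\mu(T,8\e|Y)\le h_\mu(T,\cU'|Y)$ --- the scale $\e$ is on the wrong side. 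So your argument shows $\mu^*$ is a maximizer at scale $8\e$ rather than $\e$, which is all that is actually needed for the theorem's $\limsup/\liminf$ statements (and is at the same level of precision as the paper's own one-line justification of ``max''). Apart from this parameter mismatch in the max step, the proposal is correct and matches the paper's proof.
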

\begin{proof}
Let $0<\rho<1$ and $\mu\in \cE_T(X)$. Let $\cU$ be any finite open cover of $X$ with $\diam \cU \le \e$.
As any element of $\cU_0^{n-1}$ is contained in an $(n,\e)$-Bowen ball, we have $N_{\mu_x}(n,\e,\rho)\le N_{\mu_x}(\cU^{n-1}_0,\rho)$ for $\mu \ae x$.
Hence $\overline{h}^{K}_{\mu}(T,\e|Y)\le h^{S}_{\mu}(T,\cU|Y).$
By Theorem \ref{VPShapira},
\begin{equation*}
\begin{aligned}
&\overline{\mdim}_M(X,d,T,\vp|Y)\\
=&\limsup_{\e \to 0}\frac{1}{\log\frac{1}{\e}}\max_{\mu\in \cE_T(X)}\inf_{\diam \cU\le \e}\left(h^{S}_{\mu}(T,\cU|Y)+\log\frac{1}{\e}\int \vp d\mu\right)\\
\ge&\limsup_{\e \to 0}\frac{1}{\log\frac{1}{\e}}\sup_{\mu\in \cE_T(X)}\left(\overline{h}^{K}_{\mu}(T,\e|Y)+\log\frac{1}{\e}\int \vp d\mu\right).
\end{aligned}
\end{equation*}

More specifically, as before we can choose $\cU\in \cC_X^o$ with $\diam \cU \le \e$ and $Leb \cU\ge \e/4$. As every $(n,\e/4)$-Bowen ball is contained in some element of $\cU_0^{n-1}$, we have $N_{\mu_x}(\cU^{n-1}_0,\rho)\le N_{\mu_x}(n,\e/4,\rho)$.
Therefore
$h^{S}_{\mu}(T,\cU|Y)\le \underline{h}^{K}_{\mu}(T,\e/4|Y).$
By Theorem \ref{VPShapira},
\begin{equation*}
\begin{aligned}
&\overline{\mdim}_M(X,d,T,\vp|Y)\\
 =&\limsup_{\e \to 0}\frac{1}{\log\frac{1}{\e}}\max_{\mu\in \cE_T(X)}\inf_{\diam \cU\le \e}\left(h^{S}_{\mu}(T,\cU|Y)+\log\frac{1}{\e}\int \vp d\mu\right)\\
\le&\limsup_{\e \to 0}\frac{1}{\log\frac{1}{\e}}\sup_{\mu\in \cE_T(X)}\left(\underline{h}^{K}_{\mu}(T,\e/4|Y)+\log\frac{1}{\e}\int \vp d\mu\right)\\
=&\limsup_{\e \to 0}\frac{1}{\log\frac{1}{\e}}\sup_{\mu\in \cE_T(X)}\left(\underline{h}^{K}_{\mu}(T,\e|Y)+\log\frac{1}{\e}\int \vp d\mu\right).
\end{aligned}
\end{equation*}
where in the last equality we use the trick $\lim_{\e\to 0}\frac{\log \frac{1}{\e}}{\log \frac{4}{\e}}=1$. Thus we obtain the equality with ``$\sup$'', and then the latter half of the proof shows that it is in fact a ``$\max$''. The equalities for $\underline{\mdim}_M(X,d,T,\vp|Y)$ in the theorem can be proved analogously.
\end{proof}

\section{Variational principle via Brin-Katok local entropy}
In this section, we prove the variational principle involving upper Brin-Katok local entropy, and a variational inequality and principle involving lower Brin-Katok local entropy under certain conditions. Let us first introduce some terminology.

Let $T: X\to X$ be a TDS. Following the idea of Brin-Katok local entropy \cite{BK}, we define for $\mu\in \cM(X)$
\begin{equation*}
\underline{h}^{BK}_\mu(T,\e):=\int_X\underline{h}^{BK}_\mu(T,x,\e) d\mu, \quad \overline{h}^{BK}_\mu(T,\e):=\int_X\overline{h}^{BK}_\mu(T,x,\e) d\mu
\end{equation*}
where
\begin{equation*}\label{e:BK}
\begin{aligned}
\underline{h}^{BK}_\mu(T,x,\e):=&\liminf_{n\to \infty}-\frac{1}{n}\log\mu(B_{n}(x,\epsilon)),\\
\overline{h}^{BK}_\mu(T,x,\e):=&\limsup_{n\to \infty}-\frac{1}{n}\log\mu(B_{n}(x,\epsilon)).
\end{aligned}
\end{equation*}
Furthermore, define
\begin{equation*}
\begin{aligned}
\underline{h}^{BK}_\mu(T,x):=\lim_{\e\to 0}\underline{h}^{BK}_\mu(T,x,\e), &\quad \overline{h}^{BK}_\mu(T,x):=\lim_{\e\to 0}\overline{h}^{BK}_\mu(T,x,\e), \\
\underline{h}^{BK}_\mu(T):=\int_X\underline{h}^{BK}_\mu(T,x) d\mu, &\quad \overline{h}^{BK}_\mu(T):=\int_X\overline{h}^{BK}_\mu(T,x) d\mu.
\end{aligned}
\end{equation*}
When $\mu\in \cM_T(X)$, \emph{Brin-Katok's formula} \cite{BK} states that
$\underline{h}^{BK}_{\mu}(T,x)=\overline{h}^{BK}_{\mu}(T,x)$ for $\mu$-a.e. $x\in X,$ and
$$\underline{h}^{BK}_{\mu}(T)=\overline{h}^{BK}_{\mu}(T)=h_{\mu}(T)$$
where $h_{\mu}(T)$ is the metric entropy of $\mu$.
In particular, when $\mu\in \cE_T(X),$ $\underline{h}^{BK}_{\mu}(T,x)=\overline{h}^{BK}_{\mu}(T,x)=h_{\mu}(T)$ for $\mu$-a.e. $x\in X.$
The novelty here is that the Brin-Katok local entropy can be defined for any $\mu\in \cM(X)$.

Let $T: (X, d)\to (X,d)$ and $S:Y\to Y$ be two TDSs and $\pi:X\to Y$ a factor map. Define
\begin{equation*}
\begin{aligned}
\overline{h}^{BK}_{\mu}(T,\e|Y):=&\int \overline{h}^{BK}_{\mu_x}(T,\e)d\mu(x),\\
\underline{h}^{BK}_{\mu}(T,\e|Y):=&\int \underline{h}^{BK}_{\mu_x}(T,\e)d\mu(x).
\end{aligned}
\end{equation*}

Let $\mu\in \cE_T(X)$. Then $\overline{h}_{\mu_x}(T,x,\e)\ge \overline{h}_{\mu_{Tx}}(T,Tx,\e)$ and $\underline{h}_{\mu_x}(T,x,\e)\ge \underline{h}_{\mu_{Tx}}(T,Tx,\e)$. By Birkhoff ergodic theorem and ergodicity of $\mu$,
\begin{equation}\label{e:BKergodic}
\begin{aligned}
\overline{h}_{\mu_x}(T,x,\e)=\overline{h}^{BK}_\mu(T,\e|Y),\quad
\underline{h}_{\mu_x}(T,x,\e)=\underline{h}^{BK}_\mu(T,\e|Y)
\end{aligned}
\end{equation}
for $\mu \ae x$.
\begin{theorem}\label{VPBK}
Let $\pi:(X, d, T)\to (Y,S)$ be a factor map between two TDSs. Then
 for any potential $\vp \in C(X, \RR)$,
\begin{equation*}
\begin{aligned}
&\overline{\mdim}_M(X,d,T,\vp|Y)\\
=&\limsup_{\e \to 0}\frac{1}{\log\frac{1}{\e}}\max_{\mu\in \M_T(X)}\left(\overline{h}_\mu^{BK}(T, \e|Y)+\log\frac{1}{\e}\int \vp d\mu\right)
\end{aligned}
\end{equation*}
and
\begin{equation*}
\begin{aligned}
&\underline{\mdim}_M(X,d,T,\vp|Y)\\
=&\liminf_{\e \to 0}\frac{1}{\log\frac{1}{\e}}\max_{\mu\in \M_T(X)}\left(\overline{h}_\mu^{BK}(T, \e|Y)+\log\frac{1}{\e}\int \vp d\mu\right).
\end{aligned}
\end{equation*}
Moreover, both $\M_T(X)$ in the above equalities can be replaced by $\cE_T(X)$.
\end{theorem}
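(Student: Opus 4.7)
The plan is to deduce Theorem \ref{VPBK} from Theorems \ref{VPpartition} and \ref{VPKatok} by sandwiching $\overline{h}^{BK}_\mu(T,\e|Y)$ between the partition entropy $h_\mu(T,\alpha|Y)$ (for $\diam\alpha\le\e$) and Katok's entropy $\overline{h}^K_\mu(T,c\e|Y)$ for an absolute constant $c>1$ whose effect is absorbed by the normalization $1/\log(1/\e)$.

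For the inequality $\overline{\mdim}_M(X,d,T,\vp|Y)\ge$(BK formula), I will show that for every $\mu\in\M_T(X)$ and every finite Borel partition $\alpha$ with $\diam\alpha\le\e$,
\[h_\mu(T,\alpha|Y)\ge \overline{h}^{BK}_\mu(T,\e|Y).\]
This is proved pointwise: the atomic inclusion $\alpha_0^{n-1}(z)\subset B_n(z,\diam\alpha)\subset B_n(z,\e)$ gives $\mu_{\pi z}(\alpha_0^{n-1}(z))\le\mu_{\pi z}(B_n(z,\e))$ on each fiber, so the relative Shannon--McMillan--Breiman theorem (Theorem \ref{smb0}) applied with $\xi=\pi^{-1}\epsilon_Y$ yields $h_\mu(T,\alpha|Y,z)\ge\overline{h}^{BK}_{\mu_{\pi z}}(T,z,\e)$ for $\mu$-a.e.\ $z$; integrating against $\mu$ closes the step. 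Substituting into Theorem \ref{VPpartition} applied with potential $\vp\log(1/\e)$ yields the desired one-sided inequality after dividing by $\log(1/\e)$ and taking $\limsup_{\e\to 0}$.

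For the reverse inequality, I will prove that for every $\mu\in\cE_T(X)$,
\[\overline{h}^K_\mu(T,3\e|Y)\le\overline{h}^{BK}_\mu(T,\e|Y).\]
The argument has three ingredients. First, the fiber-wise function $z\mapsto\overline{h}^{BK}_{\mu_{\pi z}}(T,z,\e)$ is $T$-super-invariant (via $T_*\mu_{\pi z}=\mu_{\pi Tz}$ together with $B_n(z,\e)\subset T^{-1}B_{n-1}(Tz,\e)$), hence $T$-invariant after integration, hence $\mu$-a.e.\ constant by ergodicity, with value $h:=\overline{h}^{BK}_\mu(T,\e|Y)$. Second, Egorov produces $A\subset X$ with $\mu(A)\ge 1-\rho$ and $N_0$ such that $\mu_{\pi z}(B_n(z,\e))\ge e^{-n(h+\delta)}$ for $z\in A$, $n\ge N_0$; a Chebyshev estimate on the disintegration $\mu=\int\mu_y\,d\nu$ (where $\nu=\pi\mu$) then isolates $Y'\subset Y$ with $\nu(Y')\ge 1-\sqrt{\rho}$ on which $\mu_y(A\cap\pi^{-1}y)\ge 1-\sqrt{\rho}$. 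Third, on each such fiber I pick a maximal $(n,2\e)$-$d_n$-separated subset $\{z_1,\ldots,z_N\}$ of $A\cap\pi^{-1}y$; the balls $B_n(z_i,\e)$ are then pairwise disjoint and each has $\mu_y$-measure $\ge e^{-n(h+\delta)}$, forcing $N\le e^{n(h+\delta)}$, while maximality implies $A\cap\pi^{-1}y\subset\bigcup_i B_n(z_i,3\e)$. This bounds $N_{\mu_y}(n,3\e,\sqrt{\rho})$ by $e^{n(h+\delta)}$ on $Y'$; the contribution from $Y\setminus Y'$ is controlled by the trivial bound $N_{\mu_y}(n,3\e,\cdot)\le r_n(X,3\e)$ and the finiteness of $\limsup_n\frac{1}{n}\log r_n(X,3\e)$ on the compact space $X$. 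Letting $\rho,\delta\to 0$ and pairing with Theorem \ref{VPKatok}, the factor $3$ is absorbed by the normalization $1/\log(1/\e)$, completing the reverse direction.

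Combining the two inequalities yields the equality for $\overline{\mdim}_M$; the $\liminf$ version is identical with $\limsup$ replaced by $\liminf$. That $\sup$ is attained as $\max$ and that $\M_T(X)$ may be replaced by $\cE_T(X)$ is inherited from the corresponding property in Theorems \ref{VPpartition}--\ref{VPKatok}, which trace back to the local variational principle (Theorem \ref{LVP}). The main technical obstacle is the reverse inequality: the new difficulty in the relative setting is transferring an Egorov-type uniform-convergence statement about the global measure $\mu$ into a uniform statement about the conditional measures $\mu_y$ on a positive proportion of fibers, which requires both the Chebyshev step on the disintegration and the super-invariance argument that makes the fiber-wise Brin-Katok exponent $\mu$-a.e.\ constant.
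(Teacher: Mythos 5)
Your proposal is correct and follows essentially the same route as the paper: both directions are reduced to Theorems \ref{VPpartition} and \ref{VPKatok} by the sandwich $h_\mu(T,\alpha|Y)\ge\overline{h}^{BK}_\mu(T,\e|Y)\ge\overline{h}^K_\mu(T,c\e|Y)$, with the first inequality coming from the atomic inclusion plus SMB (Theorem \ref{smb0}) and the second from the maximal-separated-set argument on fibers together with \eqref{e:BKergodic}. The only minor deviation is that you make the limit/integral exchange explicit via an Egorov--Chebyshev step on the disintegration (and use $3\e$ instead of $2\e$), whereas the paper passes directly from the fiberwise almost-sure bound to the integrated $\limsup$; both variants are fine since the constant is absorbed by the $1/\log(1/\e)$ normalization.
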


\begin{proof}
Given $\e>0$, let $\a$ be any finite Borel partition of $X$ with $\diam \a\le \e$. Then $\a_0^{n-1}(y)\subset B_n(y,\e)$ for any $y\in X$. For $\mu\ae x$ and $\mu_x \ae y$, since $\mu_x=\mu_y$ we have
\begin{equation*}
\begin{aligned}
\overline{h}^{BK}_{\mu_x}(T,y,\e)=&\limsup_{n\to \infty}-\frac{1}{n}\log\mu_x(B_{n}(y,\epsilon))
\le \limsup_{n\to \infty}-\frac{1}{n}\log\mu_x(\a_0^{n-1}(y))\\
= &\limsup_{n\to \infty}-\frac{1}{n}\log\mu_y(\a_0^{n-1}(y))
=h_{\mu}(T,\a|Y,y)
\end{aligned}
\end{equation*}
which implies that
\begin{equation*}
\begin{aligned}
\overline{h}_\mu^{BK}(T, \e|Y)=&\int\int\overline{h}^{BK}_{\mu_x}(T,y,\e)d\mu_x(y)d\mu(x)\\
\le& \int h_{\mu}(T,\a|Y,y)d\mu(y)=h_{\mu}(T,\a|Y).
\end{aligned}
\end{equation*}
It follows from Theorem \ref{VPpartition} that
\begin{equation*}
\begin{aligned}
&\overline{\mdim}_M(X,d,T,\vp|Y)\\
=&\limsup_{\e \to 0}\frac{1}{\log\frac{1}{\e}}\max_{\mu\in \M_T(X)}\inf_{\diam \a\le \e}\left(h_{\mu}(T,\a|Y)+\log\frac{1}{\e}\int \vp d\mu\right)\\
\ge &\limsup_{\e \to 0}\frac{1}{\log\frac{1}{\e}}\sup_{\mu\in \M_T(X)}\left(\overline{h}_{\mu}^{BK}(T,\e|Y)+\log\frac{1}{\e}\int \vp d\mu\right).
\end{aligned}
\end{equation*}

Let us prove the other direction. Given $\e>0$, take any $\a\in \cP_X$ with $\diam \a\le\epsilon.$
Let $\mu\in \cE_T(X)$. By \eqref{e:BKergodic}, there exist $X_1\in \mathcal{B}_X$ with
$\mu(X_1)=1$ and
$W_x=\pi^{-1}\pi(x)\cap X_1$ for any $x\in X_1$, such that
for each $x\in X_1$ and $y\in W_x,$
 $\mu_x(W_x)=1$ and
\begin{equation*}\label{eq10}
\limsup_{n\rightarrow\infty}-\frac{1}{n}\log\mu_x(B_n(y,\e))=\overline{h}^{BK}_\mu(T,\e|Y).
\end{equation*}
Fix $x\in X_1$. For $n\in \mathbb N$ and $\delta>0, $ set
$$I_n=\{y\in W_x:\mu_x(B_n(y,\e))>\exp(-(\overline{h}^{BK}_\mu(T,\e|Y)+\delta)n)\}.$$
Then $\lim_{n\to\infty}\mu_x(I_n)=1.$ Thus for any $0<\rho<1$, there exists sufficiently large $n\in \mathbb N$ such that $\mu_x(I_n)>1-\rho.$
Let $S_n$ be a maximal $(n,2\e)$ separated subset of $I_n$, then it is also an $(n,2\e)$ spanning subset of $I_n$. It follows that
$$\mu_x(\cup_{y\in I_n}B_n(y,2\e))\ge \mu_x(I_n)>1-\rho$$
and hence $\#S_n\ge N_{\mu_x}(n,2\e,\rho)$. On the other hand, since $\{B_n(y,\e):y\in S_n\}$ are disjoint and $S_n\subset I_n$, one has $$1\ge \mu_x(\cup_{y\in I_n}B_n(y,\e))=\sum_{y\in I_n}\mu(B_n(y,\e))\ge \#S_n\exp(-(\overline{h}^{BK}_\mu(T,\e|Y)+\delta)n).$$
Then
$N_{\mu_x}(n,2\e,\rho)\leq \exp((\overline{h}^{BK}_\mu(T,\e|Y)+\delta)n). $
Thus for any $\delta>0$,
$$\limsup_{n\to \infty}\frac{1}{n}\int \log N_{\mu_x^\xi}(n,\e,\rho)d\mu(x)\leq \overline{h}^{BK}_\mu(T,\e|Y)+\delta.$$
Letting $\delta\to 0$,
and then $\rho\to 0$ we have
$\overline{h}^{K}_{\mu}(T,\e|Y)\leq \overline{h}^{BK}_\mu(T,\e|Y).$
Thus by Theorem \ref{VPKatok}
\begin{equation*}
\begin{aligned}
&\overline{\mdim}_M(X,d,T,\vp|Y)\\
 =&\limsup_{\e \to 0}\frac{1}{\log\frac{1}{\e}}\max_{\mu\in \cE_T(X)}\left(\overline{h}^{K}_{\mu}(T,\e|Y)+\log\frac{1}{\e}\int \vp d\mu\right)\\
\le &\limsup_{\e \to 0}\frac{1}{\log\frac{1}{\e}}\sup_{\mu\in \cE_T(X)}\left(\overline{h}^{BK}_\mu(T,\e|Y)+\log\frac{1}{\e}\int \vp d\mu\right).
\end{aligned}
\end{equation*}
Thus the equality holds with ``$\sup$'', which in fact is a ``$\max$'' by the very last inequality above. The part for $\underline{\mdim}_M(X,d,T,\vp|Y)$ can be proved similarly.
\end{proof}

The following proposition illustrates a nontrivial relation between Brin-Katok local entropy and entropy of partitions given in SMB Theorem \ref{smb0} for conditional measures.
\begin{proposition}\label{second}
Suppose that $T:X\to X$ is a TDS, $\mu\in \cM_T(X)$ and $\xi$ is a $T$-invariant measurable partition of $X$. For any $\rho>0$ and any $\a\in \cP_X$ with $M:=\#\a>1$, there exist $\d>0$ and a measurable subset $I\subset X$ satisfying $\mu(I)>1-\rho^{\frac{1}{4}}$ such that: For any $x\in I$, there exists a measurable subset $D_x\subset X$ such that $\mu_{x}^{\xi}(D_{x})>1-5\rho^{\frac{1}{4}}$ and
\begin{equation*}\label{e:second}
\begin{aligned}
\underline{h}^{BK}_{\mu_{x}^{\xi}}(T,y,\d)\geq h_{\mu}(T, \alpha|\xi)(y)-3(\Delta+\rho)
\end{aligned}
\end{equation*}
for any $y\in D_{x}$, where
$$\Delta=\Delta(\rho):=2\sqrt{\rho}\log(M-1)-2\sqrt{\rho}\log 2\sqrt{\rho}-(1-2\sqrt{\rho})\log (1-2\sqrt{\rho}).$$
\end{proposition}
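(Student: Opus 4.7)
The plan is to adapt the classical Brin-Katok argument \cite{BK} to the relative setting, following the strategy of \cite{Zhou}. The proof proceeds through a geometric step (choosing the scale $\delta$), a measure-theoretic step (extracting good sets), and a combinatorial counting step.

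First I would fix the scale. Using inner regularity of $\mu$, choose for each $A\in\alpha$ a compact $K_A\subset A$ with $\sum_A \mu(A\setminus K_A)<\rho^2$; set $K:=\bigsqcup_A K_A$ and $B:=X\setminus K$. Since the $K_A$'s are pairwise disjoint compacts in $X$, pick $\delta>0$ so that $d(K_A, K_{A'})>3\delta$ whenever $A\neq A'$. The key geometric consequence is: if $y\in K_A$ and $d(y,z)<\delta$, then either $z\in K_A$ (whence $\alpha(z)=\alpha(y)$) or $z\in B$.

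Next I would build the good sets via two ergodic-theoretic tools. Birkhoff's theorem applied to $\mathbf{1}_B$, together with Markov's inequality and Egorov's theorem, yields $G_1$ with $\mu(G_1)\ge 1-2\sqrt\rho$ and $N_0\in\NN$ such that
\[
\tfrac1n\bigl|\{0\le j<n:T^j y\in B\}\bigr|\le 2\sqrt\rho \qquad \text{for all } y\in G_1,\ n\ge N_0.
\]
Applying Theorem \ref{smb0} with $\xi$ and Egorov yields $G_2$ with $\mu(G_2)\ge 1-\rho$ and $N_1$ on which $\bigl|-\tfrac1n\log\mu_y^\xi(\alpha_0^{n-1}(y))-h_\mu(T,\alpha|\xi)(y)\bigr|\le\rho$ for $n\ge N_1$. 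Setting $G:=G_1\cap G_2$ and $I:=\{x:\mu_x^\xi(X\setminus G)\le 5\rho^{1/4}\}$, Fubini plus Markov give $\mu(I)\ge 1-\rho^{1/4}$, and $D_x:=G\cap\xi(x)$ satisfies $\mu_x^\xi(D_x)\ge 1-5\rho^{1/4}$.

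For $y\in D_x$ and $n\ge\max(N_0,N_1)$, the core estimate is
\[
\mu_x^\xi(B_n(y,\delta))\le\sum_{\substack{P\in\alpha_0^{n-1}\\P\cap B_n(y,\delta)\cap D_x\neq\emptyset}}\mu_x^\xi(P)+\mu_x^\xi(X\setminus D_x).
\]
For $z\in B_n(y,\delta)\cap D_x$, the separation from the first step forces $\alpha(T^j z)=\alpha(T^j y)$ at every time $j$ with $T^j y\in K$ and $T^j z\in K$; since $y,z\in G_1$, the number of ``mismatch'' indices is at most $4\sqrt\rho n$. A Fano-type enumeration then bounds the number of $P$'s intersecting $B_n(y,\delta)\cap D_x$ by $e^{n(\Delta+O(\rho))}$, and on each such $P$ the SMB bound gives $\mu_x^\xi(P)\le e^{-n(h_\mu(T,\alpha|\xi)(y)-2\rho)}$. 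Taking $\liminf_n$ then produces the claimed inequality $\underline{h}^{BK}_{\mu_x^\xi}(T,y,\delta)\ge h_\mu(T,\alpha|\xi)(y)-3(\Delta+\rho)$.

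The main obstacle I anticipate is the last step controlling $\mu_x^\xi(P)$: the map $y\mapsto h_\mu(T,\alpha|\xi)(y)$ is $T$-invariant but need not be constant on $\xi$-classes, so an SMB estimate at a witness $z\in P\cap G_2$ controls $\mu_x^\xi(P)$ only in terms of $h_\mu(T,\alpha|\xi)(z)$. To replace this by $h_\mu(T,\alpha|\xi)(y)$, one further refines $G_2$ by localizing in a countable $T$-invariant decomposition of $X$ on which $h_\mu(T,\alpha|\xi)(\cdot)$ varies by at most $\rho$; this absorbs the discrepancy into additional $\rho$-slack and accounts for the factor $3$ in $3(\Delta+\rho)$ rather than a cleaner $\Delta+\rho$.
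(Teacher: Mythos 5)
The paper gives no written proof here; it cites \cite{Zhou} for the ergodic case, which adapts the Brin--Katok argument \cite{BK}, and Remark~\ref{BKchoice} records that the only requirement on $\delta$ is $\mu\big(U_\delta(\partial\alpha)\big)<\rho$. Your scale choice --- pairwise $3\delta$-separated compacts $K_A\subset A$ --- is not equivalent to this. It yields only the implication $\alpha(T^jz)\ne\alpha(T^jy)\Rightarrow T^jy\in B\text{ or }T^jz\in B$, so to control the mismatch frequency you must assume $z\in G_1$ as well as $y$. Two things then go wrong. First, you obtain $4\sqrt{\rho}\,n$ rather than $2\sqrt{\rho}\,n$ bad indices, so the Fano count does not give $e^{n\Delta}$ with the $\Delta$ in the statement (for small $\rho$, $\Delta$ is of order $\sqrt{\rho}\log(1/\rho)$, so this is not an $O(\rho)$ slack). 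Second, and fatally, you can only cover $B_n(y,\delta)\cap D_x$, not the full Bowen ball; the extra term $\mu_x^\xi(X\setminus D_x)<5\rho^{1/4}$ in your core estimate is a fixed positive constant independent of $n$, so it dominates the exponentially small sum and gives $\liminf_n-\tfrac1n\log\mu_x^\xi(B_n(y,\delta))\ge 0$, nowhere near $h_\mu(T,\alpha|\xi)(y)-3(\Delta+\rho)$.

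With $\mu(U_\delta(\partial\alpha))<\rho$ the situation improves in both respects: $y\notin U_\delta(\partial\alpha)$ and $d(y,z)<\delta$ force $\alpha(z)=\alpha(y)$ with no hypothesis on $z$, so for $y\in G_1$ \emph{every} $z\in B_n(y,\delta)$ has $\alpha_0^{n-1}$-name within Hamming distance $2\sqrt{\rho}\,n$ of $y$'s, and the whole ball is covered by $\le e^{n\Delta}$ cells. But even then the cell-by-cell SMB bound you propose is unavailable, since a covering cell $P$ need not contain any SMB witness. The argument in \cite{BK,Zhou} instead bounds the measure of the exceptional set $\{y\in G\cap\xi(x):\mu_x^\xi(B_n(y,\delta))>e^{-n(h_\mu(T,\alpha|\xi)(y)-3(\Delta+\rho))}\}$ directly by $e^{-n(\Delta+2\rho)}$: if the ball is big, one of the $\le e^{n\Delta}$ covering cells is big; there are few big cells, hence few cells within Hamming distance $2\sqrt{\rho}\,n$ of a big one, and those meeting $G$ have SMB-small measure. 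Borel--Cantelli together with Egorov then produces $D_x$. Your closing suggestion to decompose along level sets of $h_\mu(T,\alpha|\xi)$ is a natural device for the non-ergodic case, but note that the cited \cite{Zhou} treats only ergodic $\mu$, which is also all the paper applies the proposition to (Theorem~\ref{VPBK1}).
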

\begin{remark}\label{BKchoice}
The proof of the proposition is given in \cite{Zhou} (see p.4069 there) when $\mu$ is ergodic, which adapts Brin-Katok's original ideas \cite{BK} to conditional entropy. In the proof, the only requirement for the choice of $\d$ is that
$\mu(U_\d(\partial\a))<\rho$,
where $U_\d(\partial\a)$ is the $\d$-neighborhood of the boundary of $\a$, i.e.,
$U_\d(\partial\a):=\{x\in X: B(x,\d)\not\subseteq\a(x)\}$.

\end{remark}
\begin{theorem}\label{VPBK1}
Let $T: (X, d)\to (X,d)$ and $S:Y\to Y$ be two topological dynamical systems and $\pi:X\to Y$ a factor map. Then
 for any potential $\vp \in C(X, \RR)$,
\begin{equation*}
\begin{aligned}
&\overline{\mdim}_M(X,d,T,\vp|Y)\\
\le &\limsup_{\e \to 0}\frac{1}{\log\frac{1}{\e}}\sup_{\mu\in \cE_T(X)}\left((\overline{\dim}_BX+1)\underline{h}_\mu^{BK}(T, \e|Y)+\log\frac{1}{\e}\int \vp d\mu\right)
\end{aligned}
\end{equation*}
and
\begin{equation*}
\begin{aligned}
&\underline{\mdim}_M(X,d,T,\vp|Y)\\
\le &\liminf_{\e \to 0}\frac{1}{\log\frac{1}{\e}}\sup_{\mu\in \cE_T(X)}\left((\underline{\dim}_BX+1)\underline{h}_\mu^{BK}(T, \e|Y)+\log\frac{1}{\e}\int \vp d\mu\right).
\end{aligned}
\end{equation*}
\end{theorem}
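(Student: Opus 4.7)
The plan is to start from the partition-based variational principle (Theorem~\ref{VPpartition}) and, for each ergodic $\mu$, to produce a partition $\a$ with $\diam\a\le\e$ for which
\begin{equation*}
h_\mu(T,\a|Y) \le \underline{h}_\mu^{BK}(T,\d|Y) + 3(\Delta+\rho)
\end{equation*}
holds with $\d$ as large as a power $\e^{\overline{\dim}_BX+1+\eta}$. The displayed inequality follows from Proposition~\ref{second} together with Remark~\ref{BKchoice}: the only requirement on $\d$ is $\mu(U_\d(\partial\a))<\rho$, and the ergodicity of $\mu$ collapses the pointwise conclusion of Proposition~\ref{second} into an equality of constants via $h_\mu(T,\a|\pi^{-1}\e_Y)(y)=h_\mu(T,\a|Y)$ (conditional SMB) and \eqref{e:BKergodic}. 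The factor $\overline{\dim}_BX+1$ will then emerge as the ratio $\log(1/\d)/\log(1/\e)$.

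For the construction, fix small $\eta,\rho>0$. By the definition of upper box dimension, for sufficiently small $\e$ we may choose $(\e/4)$-dense centers $x_1,\ldots,x_N$ with $N\le\e^{-\overline{\dim}_BX-\eta}$ and form the nested partition $A_i=B(x_i,r_i)\setminus\bigcup_{j<i}A_j$ with radii $r_i\in[\e/4,\e/2]$ to be selected. For each $i$, Fubini gives
\begin{equation*}
\int_{\e/4}^{\e/2}\mu\bigl\{x:|d(x,x_i)-r|<\d\bigr\}\,dr \le 2\d,
\end{equation*}
so by Markov's inequality the bad set $\{r:\mu(U_\d(\partial B(x_i,r)))>\rho/(2N)\}$ has Lebesgue measure at most $4N\d/\rho$. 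Choosing $\d=(\rho/16)\,\e^{\overline{\dim}_BX+1+\eta}$ renders this bound less than $\e/4$, so admissible $r_i$ exist for every $i$. Since $\partial\a\subset\bigcup_i\partial B(x_i,r_i)$, the resulting $\a$ satisfies $\diam\a\le\e$, $\mu(U_\d(\partial\a))<\rho$, and $\log(1/\d)=(\overline{\dim}_BX+1+\eta)\log(1/\e)+O(1)$. Using $\#\a\le N$, the error in Proposition~\ref{second} is controlled by $3(\Delta+\rho)\le 6\sqrt{\rho}(\overline{\dim}_BX+\eta)\log(1/\e)+O(\sqrt{\rho}\log(1/\rho))$.

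Adding $\log(1/\e)\int\vp\,d\mu$, dividing by $\log(1/\e)$ and factoring through $\log(1/\d)$, one arrives at
\begin{equation*}
\frac{h_\mu(T,\a|Y)+\log(1/\e)\int\vp\,d\mu}{\log(1/\e)}\le(\overline{\dim}_BX+1+\eta)\frac{\underline{h}_\mu^{BK}(T,\d|Y)}{\log(1/\d)}+\int\vp\,d\mu+6\sqrt{\rho}(\overline{\dim}_BX+\eta)+o(1).
\end{equation*}
Taking $\sup_{\mu\in\cE_T(X)}$ and then $\limsup_{\e\to 0}$, Theorem~\ref{VPpartition} identifies the LHS with $\overline{\mdim}_M(X,d,T,\vp|Y)$. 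Since $\d=\d(\e)$ is independent of $\mu$ and tends to $0$ with $\e$, I reparametrize $\limsup_{\e\to 0}$ as $\limsup_{\d\to 0}$ and rename $\d$ back to $\e$ on the right. Letting $\rho\to 0$ kills the $\sqrt{\rho}$-error, and then $\eta\to 0$ produces the stated inequality; the $\liminf$ version is identical.

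The main obstacle is the partition construction: the exponent $\overline{\dim}_BX+1$ is precisely what is forced by balancing $N\sim\e^{-\overline{\dim}_BX}$ spherical boundary pieces, each contributing $\mu$-measure of order $\d/\e$ to $U_\d(\partial\a)$, against the target error $\rho$. The ordering of limits ($\e\to 0$ before $\rho\to 0$ before $\eta\to 0$) is dictated by the $\sqrt{\rho}\log(1/\e)$ growth of $\Delta$ in Proposition~\ref{second}, which is only $o(\log(1/\e))$ when $\rho$ is held fixed.
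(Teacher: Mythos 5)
Your proof is correct and follows the same overall strategy as the paper: start from the partition-based variational principle (Theorem~\ref{VPpartition}), for each ergodic $\mu$ construct a Borel partition of diameter $\le\e$ out of roughly $\e^{-\overline{\dim}_B X}$ balls whose boundary $\d$-neighborhoods have total $\mu$-measure $<\rho$ for $\d$ of order $\e^{\overline{\dim}_B X+1}$, apply Proposition~\ref{second} through Remark~\ref{BKchoice} together with the ergodic collapse from SMB and \eqref{e:BKergodic}, and extract the factor $\overline{\dim}_B X+1$ as the limit of $\log(1/\d)/\log(1/\e)$. The partition construction differs only cosmetically (you average over radii in $[\e/4,\e/2]$ by Fubini and Markov and introduce an auxiliary $\eta$ from the box-dimension estimate; the paper pigeonholes over $kN$ concentric spheres and uses the exact covering number $N(X,\e/2)$, which removes the need for $\eta$). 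One point is handled more carefully on your side: $\Delta(\rho)$ in Proposition~\ref{second} contains $2\sqrt{\rho}\log(M-1)$ with $M=\#\a$ growing like $\e^{-\overline{\dim}_B X}$, so $3(\Delta+\rho)$ is \emph{not} bounded by a fixed $\rho_1$ uniformly in $\e$ for a $\rho$ chosen independently of $\e$, as the paper's wording ``pick $\rho$ \ldots such that $3(\Delta(\rho)+\rho)<\rho_1$ \ldots $\rho$ is independent of $\e$'' would suggest; what is true, and what you make explicit, is that $3(\Delta+\rho)/\log(1/\e)$ tends to $O(\sqrt{\rho}\,\overline{\dim}_B X)$ as $\e\to 0$ and only then vanishes as $\rho\to 0$. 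Your explicit ordering of limits ($\e\to 0$, then $\rho\to 0$, then $\eta\to 0$) is the clean way to close the argument. Two small slips in phrasing, neither affecting correctness: the error $3(\Delta+\rho)$ is $O(\log(1/\e))$, not $o(\log(1/\e))$, for fixed $\rho$; and the ``reparametrize $\limsup_{\e\to 0}$ as $\limsup_{\d\to 0}$'' step should be stated as bounding the limsup along the subsequence $\d(\e_n)\to 0$ by $\limsup_{\d\to 0}$, since $\e\mapsto\d(\e)$ need not be onto.
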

\begin{proof}
Given $\e>0$, choose $\mu_\e\in \cE_T(X)$ such that the maximum value
$$\max_{\mu\in \cE_T(X)}\inf_{\diam \a\le \e}\left(h_{\mu}(T,\a|Y)+\log\frac{1}{\e}\int \vp d\mu\right)$$
in Theorem \ref{VPpartition} is achieved at $\mu_\e$.

Given any $\rho_1>0$ small enough (independent of $\e$), pick $\rho>0$ small enough such that $(1-\rho^\frac{1}{4})(1-5\rho^\frac{1}{4})>1-\rho_1$ and $3(\Delta(\rho)+\rho)<\rho_1$. Note that $\rho$ is independent of $\e$.
Let $\{B(x_i, \e/2): i=1, \cdots, N\}$ be an open cover of $X$ where $N=N(X,\e/2)$ is the least number of balls of radius $\e/2$ needed to cover $X$. Denote $k:=\lceil\frac{1}{\rho}\rceil$. For each $i$, the $\frac{\e}{4kN}$-open neighborhoods of $\partial B(x_i, \e/2+j\cdot\frac{\e}{2kN}), j=0,\cdots, kN-1$ are disjoint, and therefore there exists a $j_i$ such that the $\frac{\e}{4kN}$-open neighborhoods of $\partial B(x_i, \e/2+j_i\cdot\frac{\e}{2kN})$ has $\mu_\e$-measure less than $\frac{1}{kN}\le \rho/N$. Then we get an open cover $\{B(x_i, \e/2+j_i\cdot\frac{\e}{2kN}): i=1, \cdots, N\}$ of $X$, which induces naturally a finite Borel partition $\a_\e$ with $\mu_\e(U_{\frac{\e}{4kN}}(\partial\a_\e))<\rho$. 

By Remark \ref{BKchoice}, we can apply Proposition \ref{second} to get
\begin{equation*}
\begin{aligned}
&\underline{h}_{\mu_\e}^{BK}(T, \frac{\e}{4kN}|Y)= \underline{h}^{BK}_{(\mu_\e)_x}(T,y,\frac{\e}{4kN})\\
\ge&  h_{\mu_\e}(T,\a_\e|Y,y)-\rho_1=h_{\mu_\e}(T,\a_\e|Y)-\rho_1,
\end{aligned}
\end{equation*}
where we used $\eqref{e:BKergodic}$ and Theorem \ref{smb0} as $\mu_\e$ is ergodic.
It follows that
\begin{equation*}
\begin{aligned}
&\overline{\mdim}_M(X,d,T,\vp|Y)\\
=&\limsup_{\e \to 0}\frac{1}{\log\frac{1}{\e}}\max_{\mu\in \cE_T(X)}\inf_{\diam \a\le \e}\left(h_{\mu}(T,\a|Y)+\log\frac{1}{\e}\int \vp d\mu\right)\\
\le &\limsup_{\e \to 0}\frac{1}{\log\frac{1}{\e}}\left(h_{\mu_\e}(T,\a_\e|Y)+\log\frac{1}{\e}\int \vp d\mu_\e\right)\\
\le &\limsup_{\e \to 0} \left(\frac{1}{\log\frac{1}{\e}}\left(\underline{h}_{\mu_\e}^{BK}(T,\frac{\e}{4kN}|Y)+\rho_1\right)+\int \vp d\mu_\e\right)\\
\le &\limsup_{\e \to 0} \sup_{\mu\in \cE_T(X)}\left(\frac{\log\frac{4kN}{\e}}{\log\frac{1}{\e}}\frac{1}{\log\frac{4kN}{\e}}\underline{h}_{\mu}^{BK}(T,\frac{\e}{4kN}|Y)+\int \vp d\mu\right)\\
= &\limsup_{\e \to 0}\sup_{\mu\in \cE_T(X)} \left((\overline{\dim}_BX+1)\frac{1}{\log\frac{1}{\e}}\underline{h}_{\mu}^{BK}(T,\e|Y)+\int \vp d\mu\right)\\
= &\limsup_{\e \to 0}\frac{1}{\log\frac{1}{\e}}\sup_{\mu\in \cE_T(X)} \left((\overline{\dim}_BX+1)\underline{h}_{\mu}^{BK}(T,\e|Y)+\log\frac{1}{\e}\int \vp d\mu\right)
\end{aligned}
\end{equation*}
where  we used the fact that
$\limsup_{\e\to 0}\frac{\log\frac{4kN}{\e}}{\log\frac{1}{\e}}=\overline{\dim}_BX+1.$ 

This proves the inequality for $\overline{\mdim}_M(X,d,T,\vp|Y)$. The equality for\\ $\underline{\mdim}_M(X,d,T,\vp|Y)$ can be proved similarly.
\end{proof}

For $0<\e,\rho<1$ and $\mu\in \cM(X)$, denote
$$\d_{\e,\rho, \mu}:=\sup\{\d>0: \exists\a\in \cP_X \text{\ such that\ } \diam \a<\e, \mu(U_{\d}(\partial\a))<\rho\}$$
and $\d_{\e,\rho}:=\inf_{\mu\in \cM(X)}\d_{\e,\rho, \mu}$.
The following definition is a variant of the ones introduced in \cite[Definition 4.1]{KKVW} and \cite{TWL}.
\begin{definition}
A compact metric space $(X,d)$ is called \emph{very well-partitionable} if $\lim_{\e\to 0}\frac{\log \frac{1}{\d_{\e,\rho}}}{\log \frac{1}{\e}}=1$ for any $0<\rho<1$.
\end{definition}

As a corollary of the proof of Theorem \ref{VPBK1}, we have
\begin{theorem}\label{VPBK2}
Let $\pi:(X, d, T)\to (Y,S)$ be a factor map between two TDSs and $(X,d)$ is very well-partitionable. Then
 for any potential $\vp \in C(X, \RR)$,
\begin{equation*}
\begin{aligned}
&\overline{\mdim}_M(X,d,T,\vp|Y)\\
=&\limsup_{\e \to 0}\frac{1}{\log\frac{1}{\e}}\max_{\mu\in \M_T(X)}\left(\underline{h}_\mu^{BK}(T, \e|Y)+\log\frac{1}{\e}\int \vp d\mu\right)
\end{aligned}
\end{equation*}
and
\begin{equation*}
\begin{aligned}
&\underline{\mdim}_M(X,d,T,\vp|Y)\\
=&\liminf_{\e \to 0}\frac{1}{\log\frac{1}{\e}}\max_{\mu\in \M_T(X)}\left(\underline{h}_\mu^{BK}(T, \e|Y)+\log\frac{1}{\e}\int \vp d\mu\right).
\end{aligned}
\end{equation*}
Moreover, both $\M_T(X)$ in the above equalities can be replaced by $\cE_T(X)$.
\end{theorem}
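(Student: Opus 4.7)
The plan is to adapt the proof of Theorem \ref{VPBK1} verbatim, replacing the partition constructed from an $\e/2$-net of $X$ by one furnished by the very-well-partitionable hypothesis; this trades the distortion factor $\overline{\dim}_BX+1$ for $1$. The direction
\begin{equation*}
\limsup_{\e\to 0}\frac{1}{\log\frac{1}{\e}}\sup_{\mu\in\M_T(X)}\Big(\underline{h}_\mu^{BK}(T,\e|Y)+\log\frac{1}{\e}\int\vp\,d\mu\Big)\le \overline{\mdim}_M(X,d,T,\vp|Y)
\end{equation*}
is immediate from $\underline{h}_\mu^{BK}(T,\e|Y)\le \overline{h}_\mu^{BK}(T,\e|Y)$ combined with Theorem \ref{VPBK}; the analogue for $\underline{\mdim}_M$ is identical.

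For the reverse inequality, I would fix $\rho_1>0$ and pick $\rho>0$ small exactly as in the proof of Theorem \ref{VPBK1}. For each small $\e$, let $\mu_\e\in \cE_T(X)$ attain the maximum in Theorem \ref{VPpartition}. The decisive replacement is the following: instead of building $\a_\e$ from an open cover of $X$ by balls of radius $\e/2$ (which in Theorem \ref{VPBK1} forces $\d\asymp \e/N$ and introduces the distortion $\overline{\dim}_BX+1$), I would use $\d_{\e,\rho,\mu_\e}\ge \d_{\e,\rho}$ together with the sup-definition of $\d_{\e,\rho,\mu_\e}$ to pick $\d_\e:=\d_{\e,\rho,\mu_\e}/2\ge \d_{\e,\rho}/2$ and a partition $\a_\e\in\cP_X$ satisfying $\diam\a_\e<\e$ and $\mu_\e(U_{\d_\e}(\partial\a_\e))<\rho$. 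Remark \ref{BKchoice}, combined with \eqref{e:BKergodic}, Theorem \ref{smb0} and the ergodicity of $\mu_\e$, then yields $\underline{h}_{\mu_\e}^{BK}(T,\d_\e|Y)\ge h_{\mu_\e}(T,\a_\e|Y)-\rho_1$. The chain of inequalities from the proof of Theorem \ref{VPBK1} now carries through verbatim with $\e/(4kN)$ replaced throughout by $\d_\e$, and the very-well-partitionable hypothesis guarantees $\log(1/\d_\e)/\log(1/\e)\to 1$ as $\e\to 0$ (since $\d_\e\ge \d_{\e,\rho}/2$, and $\log(1/\d_{\e,\rho})/\log(1/\e)\to 1$ by definition). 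This gives the desired upper bound with $\sup$, and the passage from $\sup$ to $\max$ (attained in $\cE_T(X)$ and hence in $\M_T(X)$) follows exactly as in the closing lines of the proof of Theorem \ref{VPBK}.

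I expect the main obstacle to be the technical step of replacing $\log(1/\d_\e)$ by $\log(1/\e)$ in the denominator when the entropy $\underline{h}_{\mu_\e}^{BK}(T,\d_\e|Y)$ may diverge as $\d_\e\to 0$. The resolution is the same asymptotic identification used in the closing lines of the proof of Theorem \ref{VPBK1}: the slowly-varying factor $\log(1/\d_\e)/\log(1/\e)$ tends to one, and the summand $\int\vp\,d\mu_\e$ is bounded by $\|\vp\|$, so one can absorb the near-unit prefactor without losing control of the potentially divergent entropy, just as the ratio $\log(1/\e)/\log(4kN/\e)\to 1/(\overline{\dim}_BX+1)$ was handled there. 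No new ideas are required beyond this substitution.
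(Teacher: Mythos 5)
Your proof is correct and matches what the paper intends: the paper states Theorem \ref{VPBK2} ``as a corollary of the proof of Theorem \ref{VPBK1},'' which is precisely the substitution you carry out — replacing the ball-cover partition (whose boundary-neighborhood scale $\e/(4kN)$ forces the $\overline{\dim}_BX+1$ factor) by a partition $\a_\e$ at scale $\d_\e\ge\d_{\e,\rho}/2$ furnished by the very-well-partitionable hypothesis, so that $\log(1/\d_\e)/\log(1/\e)\to 1$ absorbs the distortion. The one small point worth spelling out is that $\d_\e<\d_{\e,\rho,\mu_\e}$ and $\d_{\e,\rho,\mu_\e}$ is a supremum, so you first obtain some $\d'\in(\d_\e,\d_{\e,\rho,\mu_\e}]$ and a partition $\a_\e$ with $\diam\a_\e<\e$ and $\mu_\e(U_{\d'}(\partial\a_\e))<\rho$, and then the monotonicity $U_{\d_\e}(\partial\a_\e)\subset U_{\d'}(\partial\a_\e)$ gives the required bound at scale $\d_\e$; otherwise the argument is exactly as you say.
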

We discuss the box dimension of the following important example. Let $X=[0,1]^\ZZ$ be the infinite product of the unit interval $[0,1]$ and $T: X \to X$ the shift: $T((x_n)_{n\in \ZZ})=(x_{n+1})_{n\in \ZZ}$. We define a distance $d$ on $X$ by
$$d(x, y) =\sum_{n\in \ZZ}2^{-|n|}|x_n-y_n|$$
for $x =(x_n)_{n\in \ZZ}$ and $y=(y_n)_{n\in \ZZ}$ in $X$. It is calculated in \cite{LT1} that
$$N(X,d, \e)=(1+\lfloor12/\e\rfloor)^{2\lceil\log 4/\e\rceil+2}$$
which implies $\underline{\dim}_B(X,d)=\infty$. So the upper bound in Theorem \ref{VPBK1} is far from being optimal for this example. On the other hand, by the calculation in \cite[Example 5.3]{Shi}, in fact we have
$$\lim_{\e \to 0}\frac{1}{\log\frac{1}{\e}}\underline{h}_\mu^{BK}(T, \e)=1=\mdim_M(X,d,T)$$
where $\mu=\cL^{\otimes \ZZ}$ where $\cL$ is the Lebesgue measure on $[0,1]$.

\begin{question}\cite[Open problem]{Shi}
In the setting of Proposition \ref{VPBK1}, do we have
\begin{equation*}
\begin{aligned}
&\overline{\mdim}_M(X,d,T,\vp|Y)\\
= &\limsup_{\e \to 0}\frac{1}{\log\frac{1}{\e}}\max_{\mu\in \M_T(X)}\left(\underline{h}_\mu^{BK}(T, \e|Y)+\log\frac{1}{\e}\int \vp d\mu\right)
\end{aligned}
\end{equation*}
and similarly for $\underline{\mdim}_M(X,d,T,\vp|Y)$?
\end{question}
\section{Relative metric mean dimension with potential after Ledrappier-Walters}
Let $T: (X, d)\to (X,d)$ and $S:Y\to Y$ be two TDSs and $\pi:X\to Y$ a factor map. In this section, we always consider $\nu\in \cM_S(Y)$ and a potential $\vp \in C(X, \RR)$.

\begin{definition}\label{unstableentropy2}
The \emph{upper relative metric mean dimension with potential} $\vp$ of the system $(X,d,T,Y, S, \nu)$ is defined by
\begin{equation*}
\begin{aligned}
\overline{\mdim}^\nu_M(X,d,T,\vp|Y):=\limsup_{\e \to 0}\frac{1}{\log\frac{1}{\e}}\int_YP(d, T, \log\frac{1}{\e}\varphi, \epsilon,y)d\nu(y)
\end{aligned}
\end{equation*}
where
\begin{equation*}
\begin{aligned}
P(d, T, \log\frac{1}{\e}\varphi, \epsilon,y):=\limsup_{n\to \infty}\frac{1}{n}\log P_n(d, T, \log\frac{1}{\e}\varphi, \e, \pi^{-1}y).
\end{aligned}
\end{equation*}
Similarly, the \emph{lower relative metric mean dimension with potential} $\vp$ of the system $(X,d,T,Y,\nu)$ is defined by
\begin{equation*}
\begin{aligned}
\underline{\mdim}^\nu_M(X,d,T,\vp|Y):=\liminf_{\e \to 0}\frac{1}{\log\frac{1}{\e}}\int_YP(d, T, \log\frac{1}{\e}\varphi, \epsilon,y)d\nu(y).
\end{aligned}
\end{equation*}
\end{definition}


A local inner variational principle has been obtained in \cite{MaChen}.
\begin{theorem}[Cf. Theorem 1.2 in \cite{MaChen}]
We have
\begin{equation*}
\begin{aligned}
\int_Y P(d,T,\vp,\cU,y)d\nu(y)=\max_{\mu\in \M_T(X)}\left\{h_\mu(T,\cU|Y)+\int \vp d\mu: \pi\mu=\nu\right\}.
\end{aligned}
\end{equation*}
\end{theorem}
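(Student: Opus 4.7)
The plan is to prove the identity by establishing two inequalities, paralleling the classical Ledrappier-Walters proof of Theorem \ref{conditionalVP} but with the point-wise pressure $P(T,\vp,y)$ replaced throughout by its local variant $P(d,T,\vp,\cU,y)$.

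For the ``$\ge$'' direction (i.e.\ $\max_\mu\le\int P\,d\nu$), fix $\mu\in\M_T(X)$ with $\pi\mu=\nu$ and any partition $\a\succeq\cU$. Using the disintegration identity $H_\mu(\a_0^{n-1}|\pi^{-1}\cB(Y))=\int_Y H_{\mu_y}(\a_0^{n-1})\,d\nu(y)$, I would combine this with a fiberwise Misiurewicz-type estimate
\begin{equation*}
H_{\mu_y}(\a_0^{n-1}) + \int S_n\vp\,d\mu_y \;\le\; \log p_n(d,T,\vp,\cU,\pi^{-1}y) + nC(\a,\cU),
\end{equation*}
derived by the standard Jensen-Lagrange trick applied on each fibre $\pi^{-1}y$, with the error $C(\a,\cU)$ coming from the discrepancy between refining $\a$ through $\cU$ and the inf-over-subcovers defining $p_n$. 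Integrating against $\nu$, dividing by $n$, and applying the reverse Fatou lemma---legitimate because the integrands are dominated by $\|\vp\|$ plus $\frac1n\log$ of a finite cover cardinality---yields $h_\mu(T,\a|Y)+\int\vp\,d\mu\le \int_Y P(d,T,\vp,\cU,y)\,d\nu(y)$. Taking the infimum over $\a\succeq\cU$ closes this direction. An alternative route avoiding the fiberwise inequality altogether is to combine the global Theorem \ref{LVP} with the affinity under ergodic decomposition of both $h_\mu(T,\cU|Y)+\int\vp\,d\mu$ and $\nu\mapsto\int P(d,T,\vp,\cU,y)d\nu(y)$.

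For the ``$\le$'' direction, I construct a measure realizing the maximum. First, the ergodic decomposition $\nu=\int_{\cE_S(Y)}\nu_z\,d\tau(z)$ together with the affinity of both sides (using \cite[Theorem 5.3]{HYZ} for the entropy side) reduces to the case of $S$-ergodic $\nu$. For such $\nu$, fix $n$ and choose---measurably in $y$---a nearly-optimal subcover $\cV_n(y)$ of $\cU_0^{n-1}$ covering $\pi^{-1}y$ together with base-points $x_B\in B$ for $B\in\cV_n(y)$. Form the fiberwise weighted empirical measure
\begin{equation*}
\sigma_n(y) \;:=\; \Bigl(\sum_{B\in\cV_n(y)} e^{S_n\vp(x_B)}\Bigr)^{-1}\sum_{B\in\cV_n(y)} e^{S_n\vp(x_B)}\,\delta_{x_B},
\end{equation*}
its Birkhoff average $\mu_n(y):=\frac1n\sum_{i=0}^{n-1}T_\ast^i\sigma_n(y)$, the integrated measure $\mu_n:=\int\mu_n(y)\,d\nu(y)$, and any weak-$\ast$ subsequential limit $\mu$. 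Then $\mu$ is automatically $T$-invariant and satisfies $\pi\mu=\nu$; a Misiurewicz partition-refinement argument---mirroring the proof of Theorem \ref{LVP} but implemented fiberwise and then integrated against $\nu$---yields $h_\mu(T,\cU|Y)+\int\vp\,d\mu\ge \int_Y P(d,T,\vp,\cU,y)\,d\nu(y)$.

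The principal obstacle is the measurable dependence of $\cV_n(y)$ and of the base-points $x_B$ on $y$, without which the integrated measure $\mu_n$ fails to be a well-defined Borel probability. This is addressed by a Kuratowski--Ryll-Nardzewski selection theorem applied to the multi-valued maps $y\mapsto\pi^{-1}y$ and $y\mapsto\{\text{subcovers of }\cU_0^{n-1}\text{ covering }\pi^{-1}y\}$. A secondary technicality is the upper semi-continuity of $\mu\mapsto h_\mu(T,\cU|Y)$ at the weak-$\ast$ limit, which requires arranging that the Misiurewicz partition $\a\succeq\cU$ has boundary of $\mu$-measure zero; this is achieved by a standard perturbation argument, as in the proof of Theorem \ref{LVP}.
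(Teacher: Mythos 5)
This theorem is not proved in the paper; it is quoted verbatim from \cite{MaChen} (their Theorem 1.2), so there is no internal argument to compare against. Evaluating your sketch on its own, the ``$\ge$'' direction has a genuine gap. After dividing by $n$ and letting $n\to\infty$, the additive error $C(\a,\cU)$ persists, and your assertion that ``taking the infimum over $\a\succeq\cU$ closes this direction'' is precisely the nontrivial combinatorial content of every local variational principle, not a bookkeeping step. For $\a\succeq\cU$, each atom of $\a_0^{n-1}$ lies in some atom of $\cU_0^{n-1}$, but a nearly minimal subcover $\cV\subset\cU_0^{n-1}$ of the fibre $\pi^{-1}y$ has no reason to include the $\cU_0^{n-1}$-atom sitting above a given $\a_0^{n-1}$-atom; the Jensen--Lagrange estimate therefore bounds $H_{\mu_y}(\a_0^{n-1})+\int S_n\vp\,d\mu_y$ only by $\log\sum_{B\in\cU_0^{n-1}}\sup_B e^{S_n\vp}$, which can be exponentially larger than $\log p_n(d,T,\vp,\cU,\pi^{-1}y)$. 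Making $\inf_{\a\succeq\cU}$ kill this discrepancy is exactly what the Romagnoli/Glasner--Weiss/Huang--Ye--Zhang combinatorics accomplish, and it cannot be absorbed into a uniform $o(n)$ error by a ``standard trick.''

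Your alternative route for that direction is also backwards. In \cite{MaChen} the global Theorem~\ref{LVP} (their Theorem 1.3) is \emph{deduced} from the present inner statement via their Lemma 4.7, $P(d,T,\vp,\cU|Y)=\max_{\nu\in\M_S(Y)}\int P(d,T,\vp,\cU,y)\,d\nu(y)$, a fact the paper itself records in the remark following Theorem~\ref{VPpartition1}; and in any case the unconstrained maximum in Theorem~\ref{LVP} gives no control on the fibre $\{\mu:\pi\mu=\nu\}$ for a prescribed $\nu$. Your ``$\le$'' direction---fibered empirical measures with weights $e^{S_n\vp(x_B)}$, measurable selection of subcovers and base-points, and upper semicontinuity of the conditional cover-entropy at a weak-$\ast$ limit via a $\mu$-boundary-null refining partition---is a reasonable outline of the Misiurewicz-type construction, but the key entropy estimate at the limit is asserted rather than carried out, and the reduction to $S$-ergodic $\nu$ itself needs a measurable choice of maximizers $\mu_z$ over the ergodic components $\nu_z$.
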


We then have the following inner variational principle involving entropy of partitions.
\begin{theorem}\label{VPpartition1}
Let $T: (X, d)\to (X,d)$ and $S:Y\to Y$ be two TDSs and $\pi:X\to Y$ a factor map. Then
 for any $\nu\in \cM_S(Y)$ and any potential $\vp \in C(X, \RR)$,
\begin{equation*}
\begin{aligned}
&\overline{\mdim}^\nu_M(X,d,T,\vp|Y)\\
=&\limsup_{\e \to 0}\frac{1}{\log\frac{1}{\e}}\max_{\mu\in \M_T(X),\pi\mu=\nu}\inf_{\diam \a\le \e}\left(h_\mu(T,\a|Y)+\log\frac{1}{\e}\int \vp d\mu\right)
\end{aligned}
\end{equation*}
and
\begin{equation*}
\begin{aligned}
&\underline{\mdim}^\nu_M(X,d,T,\vp|Y)\\
=&\liminf_{\e \to 0}\frac{1}{\log\frac{1}{\e}}\max_{\mu\in \M_T(X), \pi\mu=\nu}\inf_{\diam \a\le \e}\left(h_\mu(T,\a|Y)+\log\frac{1}{\e}\int \vp d\mu\right).
\end{aligned}
\end{equation*}
\end{theorem}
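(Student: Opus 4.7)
The plan is to follow the proof of Theorem \ref{VPpartition} essentially verbatim, making two systematic substitutions. First, replace the outer local variational principle Theorem \ref{LVP} by the inner local variational principle of Ma--Chen cited immediately above. Second, replace each occurrence of $\sup_{y\in Y}$ of the counting quantities $P_n,Q_n,p_n$ by integration $\int_Y (\cdot)\,d\nu(y)$; the pointwise-in-$y$ quantities $P(d,T,\vp,\e,y)$, $Q(d,T,\vp,\e,y)$, and $P(d,T,\vp,\cU,y)$ were already introduced in Section~2 for exactly this purpose. The crucial observation making the substitution work is that the three inequalities in Lemma~\ref{coincide} hold pointwise with $K=\pi^{-1}y$, so applying them for each $y$ and then integrating against $\nu$ is legitimate.

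Concretely, I fix $\e>0$ and pick $\cU=\cU_\e\in\cC_X^o$ with $\diam\cU\le\e$ and $Leb\,\cU\ge\e/4$ via Lemma~3.4 of \cite{GS}. For the ``$\le$'' direction of the upper mean dimension, I first enlarge $\{\a:\diam\a\le\e\}$ to $\{\a:\a\succeq\cU\}$ (note that $\a\succeq\cU$ automatically forces $\diam\a\le\diam\cU\le\e$) and use $h_\mu(T,\cU|Y)=\inf_{\a\succeq\cU}h_\mu(T,\a|Y)$. The inner local variational principle then converts the resulting sup-inf into $\int_Y P(d,T,\vp\log\tfrac{1}{\e},\cU,y)\,d\nu(y)$, which by Lemma~\ref{coincide}(2) applied pointwise in $y$ and by \eqref{e:difference} is bounded by
\[
\tau_\cU\log\tfrac{1}{\e}\;+\;\|\vp\|\log 8\;+\;\int_Y P(d,T,\vp\log\tfrac{8}{\e},\e/8,y)\,d\nu(y).
\]
For the reverse direction, Lemma~\ref{coincide}(1) pointwise in $y$ produces
$\int_Y P(d,T,\vp\log\tfrac{1}{\e},\e,y)\,d\nu(y)\le \int_Y P(d,T,\vp\log\tfrac{1}{\e},\cU,y)\,d\nu(y)$, which by the inner local variational principle equals $\max_{\mu:\pi\mu=\nu}\{h_\mu(T,\cU|Y)+\log\tfrac{1}{\e}\int\vp\,d\mu\}$; since any partition with $\diam\a\le\e/8\le Leb\,\cU$ refines $\cU$, this quantity is in turn dominated by $\max_{\mu:\pi\mu=\nu}\inf_{\diam\a\le\e/8}\{h_\mu(T,\a|Y)+\log\tfrac{1}{\e}\int\vp\,d\mu\}$.

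Dividing both chains by $\log\tfrac{1}{\e}$ and taking $\limsup$ as $\e\to 0$, the facts $\tau_\cU\to 0$ and $\log\tfrac{1}{\e}/\log\tfrac{C}{\e}\to 1$ for any $C>0$ squeeze them onto the same quantity, yielding the stated equality for $\overline{\mdim}^\nu_M$. The outer ``$\sup$'' is in fact realized as a ``$\max$'' because the inner local variational principle itself delivers a maximizer $\mu$ with $\pi\mu=\nu$ in the lower chain, and this maximizer then witnesses the upper chain as well. The statement for $\underline{\mdim}^\nu_M$ follows by replacing $\limsup$ with $\liminf$ throughout. The main (and essentially only) obstacle is notational: one must keep track of the additive error $\|\vp\|\log 8$ and the multiplicative error $\tau_\cU\log\tfrac{1}{\e}$ produced when rescaling the potential from $\vp\log\tfrac{1}{\e}$ to $\vp\log\tfrac{8}{\e}$ via \eqref{e:difference}, which is handled exactly as in Theorem~\ref{VPpartition}.
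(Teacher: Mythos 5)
Your proposal is correct and follows essentially the same route as the paper's own proof: fix $\cU_\e$ with $\diam\cU_\e\le\e$ and $Leb\,\cU_\e\ge\e/4$, sandwich $\int_Y P(d,T,\vp\log\tfrac{1}{\e},\cU_\e,y)\,d\nu(y)$ between the two $\e$-scales using Lemma~\ref{coincide} applied fibrewise, convert to the sup-inf over partitions via the inner local variational principle of Ma--Chen, and let $\e\to 0$. The only quibble is your use of the word ``enlarge'': since $\{\a:\a\succeq\cU\}\subset\{\a:\diam\a\le\e\}$, you are in fact restricting the index set of the infimum (which increases the inf and gives the needed inequality), as your own parenthetical remark confirms.
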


\begin{proof}
We only need modify the proof of Theorem \ref{VPpartition}.
Given $\e>0$, choose $\cU=\cU_\e\in \cC^o_X$ such that $\diam \cU \le \e$ and $Leb \cU\ge \e/4$. Then
\begin{equation*}
\begin{aligned}
&\sup_{\mu\in \M_T(X),\pi\mu=\nu}\inf_{\diam \a\le \e}(h_\mu(T,\a|Y)+\log\frac{1}{\e}\int \vp d\mu)\\
\le &\max_{\mu\in \M_T(X),\pi\mu=\nu}\inf_{\a\succeq \cU}(h_\mu(T,\a|Y)+\log\frac{1}{\e}\int \vp d\mu)\\
=&\int_YP(d, T,\vp\log\frac{1}{\e},\cU,y)d\nu(y)\\
=&\int_Y\lim_{n\to \infty}\frac{1}{n}\log p_n(d, T, \varphi\log\frac{1}{\e}, \cU, \pi^{-1}y)d\nu(y)\\
\le & \int_Y\lim_{n\to \infty}\frac{1}{n}\log (1/\e)^{n\tau_\cU}P_n(d, T, \varphi\log\frac{1}{\e}, Leb\cU/2, \pi^{-1}y)d\nu(y)\\
\le & \int_Y\lim_{n\to \infty}\frac{1}{n}\log (1/\e)^{n\tau_\cU}P_n(d, T, \varphi\log\frac{8}{\e}, \e/8, \pi^{-1}y)d\nu(y)+\|\vp\|\log 8\\
=&\int_YP(d, T, \log\frac{8}{\e}\varphi, \epsilon,y)d\nu(y)+\tau_\cU\log\frac{1}{\e}+\|\vp\|\log 8.
\end{aligned}
\end{equation*}
On the other hand,
\begin{equation*}
\begin{aligned}
&\int_YP(d, T, \vp\log\frac{1}{\e}, \epsilon,y)d\nu(y)\\
= & \int_Y\lim_{n\to \infty}\frac{1}{n}\log P_n(d, T, \vp\log\frac{1}{\e}, \e, \pi^{-1}y)d\nu(y)\\
\le&\int_Y\lim_{n\to \infty}\frac{1}{n}\log p_n(d, T, \vp\log\frac{1}{\e}, \cU, \pi^{-1}y)d\nu(y)\\
=&\int_Y P(T,\log\frac{1}{\e}\vp,\cU,y)d\nu(y)\\
= &\max_{\mu\in \M_T(X), \pi\mu=\nu}\inf_{\a\succeq \cU}(h_\mu(T,\a|Y)+\log\frac{1}{\e}\int \vp d\mu)\\
\le &\sup_{\mu\in \M_T(X), \pi\mu=\nu}\inf_{\diam \a\le \e/8}(h_\mu(T,\a|Y)+\log\frac{1}{\e}\int \vp d\mu).
\end{aligned}
\end{equation*}
Dividing both sides by $\log\frac{1}{\e}$ and taking the limit as $\e\to 0$, we obtain the equality in the theorem with $\sup$, which becomes a ``$\max$'' by the last inequality above. The part for $\underline{\mdim}^\nu_M(X,d,T,\vp|Y)$ can be proved similarly.
\end{proof}

\begin{remark}
From the above proof of Theorem \ref{VPpartition1}, we have
$$P(d, T, \vp\log\frac{1}{\e}, \epsilon,y)\le P(d, T, \varphi\log\frac{1}{\e}, \cU_\epsilon,y)\le P(d, T, \varphi\log\frac{1}{\e}, \epsilon/8,y)$$
and hence
\begin{equation*}
\begin{aligned}
\overline{\mdim}^\nu_M(X,d,T,\vp|Y)=\limsup_{\e \to 0}\frac{1}{\log\frac{1}{\e}}\int_YP(d, T, \varphi\log\frac{1}{\e}, \cU_\epsilon,y)d\nu(y).
\end{aligned}
\end{equation*}
By \cite[Lemma 4.7]{MaChen}, for any $\cU\in \cC_X^o$,
$$P(d,T,\vp,\cU|Y)=\max_{\nu\in \M_S(Y)}\int P(d,T,\vp,\cU,y)d\nu(y).$$
So we can deduce the variational principle Theorem \ref{VPpartition} by taking $\max_{\nu\in \M_S(Y)}$ in each expression in the above proof of Theorem \ref{VPpartition1}.
\end{remark}

Let $\nu\in \cE_T(X)$. All the equalities in Theorems \ref{VPShapira}, \ref{VPKatok}, \ref{VPBK} still hold if
$$\overline{\mdim}_M(X,d,T,\vp|Y), \overline{\mdim}_M(X,d,T,\vp|Y), \max_{\mu\in \cE_T(X)}$$ are replaced by $$\overline{\mdim}^\nu_M(X,d,T,\vp|Y), \overline{\mdim}^\nu_M(X,d,T,\vp|Y), \max_{\mu\in \cE_T(X),\pi\mu=\nu}$$ respectively.
Indeed, in the proof of these three theorems only the relations between entropy involving partitions, Shapira's entropy, Katok's entropy and Brin-Katok local entropy are involved. Thus we can modify the proof accordingly to obtain the corresponding inner variational principles.
\\

$\mathbf{Acknowledgements}$. The author would like to thank Daren Wei and Ruxi Shi for helpful discussions. This work is supported by NSFC Nos. 11701559 and 12071474.

\section{Appendix: Proof of Proposition \ref{shapirathm}}
We prepare two lemmas before going to the proof of Proposition \ref{shapirathm}.
\begin{lemma}\label{partition}
Let $\pi: (X, T)\to (Y,S)$ be a factor map, $\mu\in \cM_T(X)$ and $\mu=\int \mu_yd\nu(y)$ the disintegration of $\mu$ over $\nu=\pi \mu$. Then for any $\cV\in \cC_X$ and $0<\rho<1$, there exists $\b\in \cP_X$ such that $\b\succeq \cV$ and $N_{\mu_y}(\b, \rho)\le N_{\mu_y}(\cV, \rho)$ for $\nu \ae y\in Y$.
\end{lemma}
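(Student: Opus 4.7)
The plan is to build $\beta$ by combining, for each $y$, an optimal ``peeling'' of $\cV$ into $Y$-measurable pieces. Write $\cV=\{V_1,\ldots,V_k\}$ and, for each $y\in Y$, let $A_y\subseteq\{1,\ldots,k\}$ be a subset of minimal cardinality with $\mu_y(\bigcup_{i\in A_y}V_i)\ge 1-\rho$, so that $|A_y|=N_{\mu_y}(\cV,\rho)$. I choose $A_y$ to be the lexicographically smallest such subset; the map $y\mapsto A_y$ is then $\nu$-measurable because $y\mapsto\mu_y(E)$ is Borel for every Borel $E\subseteq X$ (this is built into the disintegration theorem). Extend $A_y$ to a permutation $\sigma_y$ of $\{1,\ldots,k\}$ by listing the elements of $A_y$ first in increasing order, followed by those of the complement in increasing order, and set $Y_\sigma:=\{y\in Y:\sigma_y=\sigma\}$; this is a Borel partition of $Y$ into finitely many pieces indexed by $\sigma\in S_k$.

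For each permutation $\sigma$, consider the standard peeling partition
\begin{equation*}
B^\sigma_j:=V_{\sigma(j)}\setminus\bigcup_{j'<j}V_{\sigma(j')},\qquad 1\le j\le k,
\end{equation*}
which is a finite Borel partition of $X$ refining $\cV$ and satisfies $\bigcup_{j=1}^m B^\sigma_j=\bigcup_{j=1}^m V_{\sigma(j)}$ for each $m$. Define
\begin{equation*}
\beta:=\bigl\{B^\sigma_j\cap\pi^{-1}(Y_\sigma):\sigma\in S_k,\ 1\le j\le k\bigr\}\setminus\{\emptyset\}.
\end{equation*}
Since the $Y_\sigma$ are pairwise disjoint in $Y$ and each family $\{B^\sigma_j\}_j$ partitions $X$, the collection $\beta$ is a finite Borel partition of $X$; because each of its atoms lies inside some $B^\sigma_j\subseteq V_{\sigma(j)}$, one has $\beta\succeq\cV$.

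Finally fix $y\in Y_\sigma$. The measure $\mu_y$ is concentrated on $\pi^{-1}(y)\subseteq\pi^{-1}(Y_\sigma)$, so the only atoms of $\beta$ carrying positive $\mu_y$-mass belong to the family $\{B^\sigma_j\cap\pi^{-1}(Y_\sigma):j=1,\ldots,k\}$. The first $|A_y|$ of these have union equal to $\bigl(\bigcup_{i\in A_y}V_i\bigr)\cap\pi^{-1}(Y_\sigma)$, whose $\mu_y$-measure is $\mu_y(\bigcup_{i\in A_y}V_i)\ge 1-\rho$. Hence $N_{\mu_y}(\beta,\rho)\le|A_y|=N_{\mu_y}(\cV,\rho)$ for $\nu$-a.e.\ $y$, which is the required inequality.

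The construction contains no dynamical input, and the only mildly delicate step is the $\nu$-measurability of the selection $y\mapsto A_y$ (and consequently of the $Y_\sigma$'s); this rests entirely on the Borel measurability of $y\mapsto\mu_y(E)$ supplied by the disintegration theorem, so the remaining work is purely bookkeeping.
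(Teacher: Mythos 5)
Your proposal is correct and follows the same strategy as the paper's proof: measurably partition $Y$ according to which set of indices of $\cV$ gives the minimal cover of the fiber, then on each piece of $Y$ peel $\cV$ using those indices first and intersect with the corresponding $\pi^{-1}$-strip. The only cosmetic difference is that the paper peels only the indices in the chosen subset and dumps the rest into a single leftover set which it then splits against $\cV$, whereas you peel the full cover at once via a permutation; both yield the same refinement and the same fiberwise count.
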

\begin{proof}
Let $\mathcal{V}=\{V_1, \cdots, V_m\}$. For $\nu \ae y\in Y$, there exists $I_y \subset \{1,\cdots, m\}$ with cardinality $N_{\mu_y}(\cV, \rho)$ such that $\bigcup_{i\in I_y}V_i$ covers a subset of $\pi^{-1}y$ up to a set of $\mu_y$-measure less than $\rho$. Hence we can find $y_1, \cdots, y_s \in Y$ such that for $\nu \ae y\in Y$, $I_y =I_{y_i}$ for some $i\in \{1,\cdots,s\}$. For $i = 1,\cdots s$, define
$$D_i=\{y\in Y: \mu_y(\bigcup_{j\in I_{y_i}}V_j)>1-\rho\}.$$
Let $C_1=D_1$, $C_i = D_i\setminus \cup_{j=1}^{i-1}D_j, i= 2,\cdots, s$. 

Fix $i\in \{1,\cdots, s\}$. Assume $I_{y_i}=\{k_1 < \cdots < k_{t_i}\}$ where $t_i=N_{\mu_{y_i}}(\cV, \rho)$. Take $\{W_1(y_i),\cdots,W_{t_i}(y_i)\}$ where
$$W_1(y_i) =V_{k_1}, W_2(y_i) =V_{k_2}\setminus V_{k_1},\cdots, W_{t_i}(y_i)=V_{k_{t_i}}\setminus \cup_{j=1}^{t_i-1}V_{k_j}.$$
Define $A:=X\setminus \left(\cup_{i=1}^s(\pi^{-1}C_i\cap \cup_{j=1}^{t_i}W_j(y_j)\right)$ and $A_1=A\cap V_1, A_l:=A\cap (V_l\setminus \cup_{j=1}^{l-1}V_j), l=2, \cdots, m$. Finally, define
\begin{equation*}
\begin{aligned}\b=\{&\pi^{-1}C_1\cap W_1(y_1), \cdots, \pi^{-1}C_1\cap W_{t_1}(y_1), \cdots, \\
&\pi^{-1}C_s\cap W_1(y_s), \cdots, \pi^{-1}C_s\cap W_{t_s}(y_s), A_1, \cdots, A_m\}.
\end{aligned}
\end{equation*}
Then $\b\succeq \cV$ and $N_{\mu_y}(\b, \rho)\le N_{\mu_y}(\cV, \rho)$ for $\nu \ae y\in Y$.
\end{proof}

\begin{lemma}[Strong Rohlin Lemma, see Lemma 2.5 in \cite{Sha}]\label{Rohlin}
Let $(X, \cB, \mu, T)$ be an ergodic, aperiodic invertible system and let $\a\in \cP_X$. Then for any $\d>0$ and $n\in \NN$, one can find a set $B\in \BB$ such that $B, TB, \cdots, T^{n-1}B$ are mutually disjoint, $\mu(\cup_{i=0}^{n-1}T^iB)>1-\d$ and the distribution of $\a$ is the same as the distribution of the partition $\a|_B$ that $\a$ induces on $B$.
\end{lemma}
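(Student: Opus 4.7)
The plan is to derive the Strong Rohlin Lemma from the classical (weak) Rohlin--Halmos Lemma by first constructing a much taller tower and then carving out the desired base $B$ among its levels so that the induced distribution on $B$ matches $\mu|_\a$ exactly. The tower structure will automatically supply disjointness of the iterates of $B$; a tall enough tower together with Birkhoff's ergodic theorem will give approximate distribution matching; and a careful combinatorial selection inside the base will upgrade ``approximate'' to ``exact''.

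Concretely, write $\a=\{A_1,\dots,A_k\}$. Given $\d>0$ and $n\in\N$, I will choose a large integer $K$ (to be fixed later) and apply the classical Rohlin--Halmos lemma at height $M=Kn$ with error $\d/2$, obtaining $F_0\in\BB$ whose first $M$ iterates are pairwise disjoint and satisfy $\mu(\bigcup_{j<M}T^jF_0)>1-\d/2$. Next I refine $F_0$ by the partition $\bigvee_{j=0}^{M-1}T^{-j}\a$ into coding cells indexed by the length-$M$ $\a$-itinerary of their points. Candidate bases take the form $B=\bigcup_{j=0}^{K-1}T^{jn}G_j$ with $G_j\subset F_0$: such a set automatically yields pairwise disjoint iterates $B,TB,\dots,T^{n-1}B$ sitting inside the tower, and the naive choice $G_j=F_0$ already delivers condition (2). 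Applying Birkhoff's theorem to $\mathbf{1}_{A_i}$, the empirical averages $\frac{1}{K}\sum_{j<K}\mathbf{1}_{A_i}(T^{jn}x)$ converge $\mu$-a.e.\ to $\mu(A_i)$, so for $K$ sufficiently large the distribution of $\a$ induced on $B=\bigcup_{j<K}T^{jn}F_0$ is uniformly close to $\mu|_\a$.

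To upgrade closeness to equality I will fine-tune the $G_j$'s inside the coding cells. On each coding cell, the distribution of $\a$ across the selected levels $T^{jn}F_0$ is deterministic (it is read off the itinerary), so removing or retaining pieces of a coding cell corresponds to subtracting or adding known vectors of mass to the $k$-tuple $\bigl(\mu(A_i\cap B)\bigr)_i$. Thanks to the abundance of coding cells provided by a tall tower, one can solve the resulting finite-dimensional linear matching system under the normalization constraint $\sum_i \mu(A_i\cap B)=\mu(B)$, enforcing $\mu(A_i\cap B)/\mu(B)=\mu(A_i)$ exactly while only slightly shrinking $\mu(B)$; the loss is absorbed by the initial $\d/2$ budget.

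The hard part will be this last step: realizing the exact distributional matching while simultaneously preserving both the disjointness of $\{T^iB\}_{i<n}$ and the covering bound $\mu(\bigcup_{i<n}T^iB)>1-\d$. A clean way around it is Alpern's device of combining towers of coprime heights $n$ and $n+1$, which reduces exact matching to two essentially independent approximate problems; the direct combinatorial route sketched above (the one implemented in \cite[Lemma~2.5]{Sha}) works as well. I will follow whichever implementation is notationally lighter, and in either case the resulting $B$ satisfies all three conclusions of the lemma.
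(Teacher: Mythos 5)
The paper states this lemma purely as a citation (Lemma~2.5 of Shapira~\cite{Sha}) and does not reprove it, so there is no internal proof to compare against; I will evaluate your proposal on its own terms.

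There is a genuine gap at the Birkhoff step. You claim that $\tfrac{1}{K}\sum_{j<K}\mathbf{1}_{A_i}(T^{jn}x)\to\mu(A_i)$ $\mu$-a.e.\ --- that is the ergodic theorem for $T^n$, but ergodicity and aperiodicity of $T$ do \emph{not} imply ergodicity of $T^n$. Let $T$ be the product of a Bernoulli shift $S$ on $Y$ with the cyclic rotation on $\ZZ/n\ZZ$: $T$ is ergodic and aperiodic, yet $T^n=(S^n,\mathrm{id})$ preserves the $\ZZ/n\ZZ$-coordinate. If $\a$ is the partition of $X=Y\times\ZZ/n\ZZ$ by that coordinate and the Rohlin base is taken of the form $F_0=\tilde F\times\{0\}$, then every candidate $B=\bigcup_{j<K}T^{jn}G_j$ with $G_j\subset F_0$ lies entirely inside $Y\times\{0\}$, so its $\a$-distribution is concentrated on a single atom rather than being uniform on $\ZZ/n\ZZ$; no choice of the $G_j$ inside $F_0$ changes this. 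Hence the ``approximate matching'' that your fine-tuning step is meant to correct can be off by a constant, whereas the correction may only shed on the order of $\d$ of mass if the coverage bound is to survive, so it cannot close the gap. The repair is to apply Birkhoff for $T$ itself along the full height-$Kn$ tower (so that a typical coding cell's $\a$-itinerary is $\mu|_\a$-generic) and then select, coding cell by coding cell, an $n$-separated family of levels \emph{not} constrained to the arithmetic progression $n\ZZ$; that selection --- or equivalently the Alpern two-height device you mention in passing --- is where the real content of the Strong Rohlin Lemma resides, and it is precisely the part your proposal leaves schematic.
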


\begin{proof}[Proof of Proposition \ref{shapirathm}]
The proposition follows from the following two lemmas.
\end{proof}

\begin{lemma}
Let $\mu\in \cE_T(X)$ and $\cU\in \cC_X^o$. Then for any $0<\rho<1$,
$$\overline{h}^{S}_{\mu}(T,\cU,\rho|Y)\leq h_\mu(T,\cU|Y).$$
\end{lemma}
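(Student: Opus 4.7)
The plan is to reduce to a single finite partition and apply the conditional Shannon--McMillan--Breiman theorem. Since $h_\mu(T,\cU|Y)=\inf_{\alpha\succeq\cU}h_\mu(T,\alpha|Y)$, it suffices to show that for every finite $\alpha\in\cP_X$ with $\alpha\succeq\cU$,
\[
\overline{h}^{S}_{\mu}(T,\cU,\rho|Y)\ \le\ h_\mu(T,\alpha|Y)=:h.
\]
The heuristic is that on typical fibres the atoms of $\alpha_0^{n-1}$ carry $\mu_x$-mass $\approx e^{-nh}$, so roughly $e^{nh}$ of them suffice to cover most of $\pi^{-1}\pi(x)$; refining $\alpha\succeq\cU$ then upgrades this to a cover of the same cardinality by elements of $\cU_0^{n-1}$.

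Fix such an $\alpha$. I would apply Theorem~\ref{smb0} to the $T$-invariant measurable partition $\xi=\pi^{-1}\epsilon_Y$. Because $\mu_y^{\pi^{-1}\epsilon_Y}=\mu_y$ and $\mu$ is ergodic, this yields
\[
\lim_{n\to\infty}-\frac{1}{n}\log\mu_y(\alpha_0^{n-1}(y))=h\qquad \text{for } \mu\text{-a.e.\ } y.
\]
Fix $\eta>0$ and $\delta>0$. Egorov's theorem provides $N_0\in\NN$ and a set $A\subset X$ with $\mu(A)>1-\delta$ on which $\mu_y(\alpha_0^{n-1}(y))\ge e^{-n(h+\eta)}$ for every $n\ge N_0$. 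Define $B:=\{x\in X:\mu_x(A)>1-\rho\}$. The disintegration identity $\int\mu_x(A^c)\,d\mu(x)=\mu(A^c)$ together with Markov's inequality gives $\mu(B^c)<\delta/\rho$. For $x\in B$ and $n\ge N_0$, the atoms of $\alpha_0^{n-1}$ meeting $A\cap\pi^{-1}\pi(x)$ cover a subset of $\pi^{-1}\pi(x)$ of $\mu_x$-measure at least $1-\rho$, and each has $\mu_x$-mass at least $e^{-n(h+\eta)}$, so there are at most $e^{n(h+\eta)}$ of them. Since $\alpha\succeq\cU$ forces $\alpha_0^{n-1}\succeq\cU_0^{n-1}$, replacing each such atom by a containing element of $\cU_0^{n-1}$ yields
\[
N_{\mu_x}(\cU_0^{n-1},\rho)\ \le\ e^{n(h+\eta)}\qquad (x\in B,\ n\ge N_0).
\]

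On $B^c$ one still has the trivial bound $N_{\mu_x}(\cU_0^{n-1},\rho)\le |\cU|^n$, so integrating gives
\[
\frac{1}{n}\int\log N_{\mu_x}(\cU_0^{n-1},\rho)\,d\mu(x)\ \le\ h+\eta+\frac{\delta}{\rho}\log|\cU|
\]
for every $n\ge N_0$. Taking $\limsup_{n\to\infty}$, then $\delta\to 0$, then $\eta\to 0$, and finally the infimum over $\alpha\succeq\cU$ yields the claim. The main subtlety is arranging the parameters so that the exceptional ``bad-fibre'' contribution $(\delta/\rho)\log|\cU|$ is harmless: one must choose the Egorov tolerance $\delta$ small compared with $\rho/\log|\cU|$, which is precisely where the conditional setting is heavier than the classical Shapira argument, since the SMB convergence is only guaranteed on a $\mu$-typical \emph{global} set $A$ rather than uniformly along each individual fibre.
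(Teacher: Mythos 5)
Your proof is correct, and the core mechanism is the same as the paper's: conditional SMB for $\xi=\pi^{-1}\epsilon_Y$ gives the typical atom of $\alpha_0^{n-1}$ fibre-measure at least $e^{-n(h+\eta)}$, so a counting bound yields $N_{\mu_x}(\alpha_0^{n-1},\rho)\le e^{n(h+\eta)}$ on most fibres, and $\alpha\succeq\cU$ transfers this to $\cU_0^{n-1}$. Where you genuinely diverge from the paper is the passage from the pointwise fibre estimate to the integral appearing in $\overline{h}^{S}_{\mu}(T,\cU,\rho|Y)$. The paper fixes a single $\nu$-typical $y$, derives $N_{\mu_y}(\cU_0^{n-1},\rho)\le e^{(h+\delta)n}$ for $n$ large (depending on $y$), and then passes to the integral of the $\limsup$; the integration step is implicitly a reverse-Fatou argument using the uniform dominating bound $\tfrac1n\log N_{\mu_y}(\cU_0^{n-1},\rho)\le\log|\cU|$. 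You instead apply Egorov to the global SMB convergence to get a uniform threshold $N_0$ on a set $A$ with $\mu(A)>1-\delta$, then use Markov's inequality to bound the set of ``bad'' fibres $B^c=\{x:\mu_x(A)\le 1-\rho\}$ by $\delta/\rho$, paying $\tfrac{\delta}{\rho}\log|\cU|$ on $B^c$ before sending $\delta\to0$. Both are sound; yours makes the integration fully explicit at the cost of the extra Markov step, and your closing remark is apt: in the relative setting Egorov only gives a $\mu$-large set of points rather than control of entire fibres, which is exactly why the Markov filter on $\mu_x(A)$ is needed and why the argument is heavier than Shapira's unconditional one.
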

\begin{proof}
Take any finite Borel partition $\a\succeq \cU$.
According to Theorem \ref{smb0}, as $\mu$ is ergodic,
there exist $Y_1\in \mathcal{B}_Y$ with
$\nu(Y_1)=1$ such that
for each $y\in Y_1$ and $\mu_y\ae x,$
\begin{equation*}\label{eq10}
\lim_{n\rightarrow\infty}\frac{-\log\mu_y(\alpha_0^{n-1}(x))}{n}= h_\mu(T,\a|Y).
\end{equation*}
Fix $y\in Y_1$. For $n\in \mathbb N$ and $\delta>0, $ set
$$I_n:=\{x\in \pi^{-1}y:\mu_y(\alpha_0^{n-1}(x))>\exp(-(h_\mu(T,\a|Y)+\delta)n)\}
=\pi^{-1}y\cap \bigcup_{V\in \mathcal{J}_n}V,
$$
where $\mathcal{J}_n=\{V\in \alpha_0^{n-1}:\mu_y(V)>\exp(-(h_\mu(T,\a|Y)+\delta)n)$.
Then for any $\delta>0$, $\lim_{n\to\infty}\mu_y(I_n)=1.$ Thus, for sufficiently large $n\in \mathbb N$, we have $\mu_y(I_n)>1-\rho.$
Since
\begin{align*}
\begin{split}
\#\mathcal{J}_n&=\#\{V\in \alpha_0^{n-1}:\mu_y(V)>\exp(-(h_\mu(T,\a|Y)+\delta)n)\\
&\leq \exp((h_\mu(T,\a|Y)+\delta)n),
\end{split}
\end{align*}
the set $I_n$ can be covered by at most $\exp((h_\mu(T,\a|Y)+\delta)n)$ elements of the partition $\alpha_0^{n-1}$. Then
$$N_{\mu_y}(\cU_0^{n-1},\rho)\le N_{\mu_y}(\a_0^{n-1},\rho)\leq \exp((h_\mu(T,\a|Y)+\delta)n). $$
Thus for any $\delta>0$,
$$\limsup_{n\to \infty}\frac{1}{n}\int \log N_{\mu_y}(\cU_0^{n-1},\rho)d\mu(y)\leq h_\mu(T,\a|Y)+\delta.$$
Letting $\delta\to 0$, we obtain
$\overline{h}^{S}_{\mu}(T,\cU,\rho|Y)\leq h_\mu(T,\a|Y).$
Taking infimum over $\a\succeq \cU$, we have
$$\overline{h}^{S}_{\mu}(T,\cU,\rho|Y)\leq h_\mu(T,\cU|Y).$$
\end{proof}


\begin{lemma}\label{lessthan}
Let $\mu\in \cE_T(X)$ and $\cU\in \cC_X^o$. Then $$h_\mu(T,\cU|Y)\le \underline{h}^{S}_{\mu}(T,\cU|Y).$$
\end{lemma}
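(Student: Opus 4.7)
The plan is to adapt Shapira's argument from \cite[Theorem 4.2]{Sha} to the relative setting, combining the uniform fiberwise partition from Lemma \ref{partition} with the Strong Rohlin Lemma \ref{Rohlin}. Specifically, I aim to construct, for every $\rho \in (0,1)$ and $\delta > 0$, a partition $\beta \in \cP_X$ with $\beta \succeq \cU$ and $h_\mu(T, \beta|Y) \le \underline{h}^S_\mu(T, \cU, \rho|Y) + \delta + o(1)$. Letting $\delta \to 0$ and then $\rho \to 0$ will yield the desired inequality.

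First I would pick $n$ large enough that $\frac{1}{n}\int \log N_{\mu_y}(\cU_0^{n-1}, \rho) d\nu(y) < \underline{h}^S_\mu(T, \cU, \rho|Y) + \delta$, and apply Lemma \ref{partition} with $\cV = \cU_0^{n-1}$ to obtain a finite partition $\gamma \succeq \cU_0^{n-1}$ with $N_{\mu_y}(\gamma, \rho) \le N_{\mu_y}(\cU_0^{n-1}, \rho)$ for $\nu$-a.e. $y$. Assuming $\mu$ is aperiodic (otherwise $\mu$ is concentrated on a finite orbit and the inequality $0 \le 0$ is trivial), apply Lemma \ref{Rohlin} to $\gamma$ with tower height $n$ and small parameter $\eta > 0$, obtaining $B$ such that $B, TB, \ldots, T^{n-1}B$ are disjoint, $\mu\bigl(\bigcup_{i=0}^{n-1} T^iB\bigr) > 1 - \eta$, and $\gamma|_B$ has the distribution of $\gamma$. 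Fixing an auxiliary partition $\zeta \succeq \cU$ with $|\zeta| = m$, define $\beta$ by setting the atoms of $\beta|_{T^iB}$ to be $\{T^i(B \cap G) : G \in \gamma\}$, and $\beta|_E := \zeta|_E$ on $E := X \setminus \bigcup_i T^iB$. Each atom $T^i(B \cap G) \subset T^i G \subset U_i$ for some $U_i \in \cU$ (using $\gamma \succeq \cU_0^{n-1}$), so $\beta \succeq \cU$ throughout $X$. A direct computation shows $\beta_0^{n-1}(x) = B \cap \gamma(x)$ for $x \in B$, i.e.\ $\beta_0^{n-1}|_B = \gamma|_B$.

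For the entropy estimate, I would decompose $H_\mu(\beta_0^{n-1}|\pi^{-1}\epsilon_Y)$ across the tower levels and the complement $E$. Using that Rohlin transfers the $\gamma$-distribution to $\gamma|_B$, the contribution from the tower is controlled by $(1 - \eta)\int \log N_{\mu_y}(\gamma, \rho) d\nu(y)$ plus corrections for the $\rho$-tail, while off the tower the contribution is at most $\eta \cdot n \log m + H(\eta)$. Dividing by $n$ and combining with the choice of $n$,
\begin{equation*}
h_\mu(T, \beta|Y) \le \tfrac{1}{n}H_\mu(\beta_0^{n-1}|\pi^{-1}\epsilon_Y) \le \underline{h}^S_\mu(T, \cU, \rho|Y) + \delta + \eta \log m + o_n(1).
\end{equation*}
Sending $\eta, \delta \to 0$ (with $n$ appropriately large) and then $\rho \to 0$ would produce the target inequality.

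The hard part will be controlling the entropy coming from the $\rho$-fraction of $\mu_y$-mass lying outside the $N_{\mu_y}(\gamma, \rho)$-heavy atoms of $\gamma|_B$: the partition $\gamma$ from Lemma \ref{partition} can have up to roughly $2^{|\cU|^n}|\cU|^n$ atoms in total, so the naive bound $H_{\mu_y}(\gamma) \le \log|\gamma|$ is useless. The argument must split $\mu_y|_B$ into heavy and tail parts and exploit the Rohlin structure together with the uniform-in-$y$ coarsening from Lemma \ref{partition} to keep the $\rho \log(\cdot)$ contribution under control. This relative bookkeeping is exactly the ingredient that is absent from the classical proof in \cite[p.~234]{Sha}.
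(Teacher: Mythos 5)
Your plan correctly identifies the two right tools (Lemma \ref{partition} and the Strong Rohlin Lemma) and aims to produce a partition finer than $\cU$ whose conditional entropy is controlled by the covering numbers, which is also the paper's strategy in outline. But the actual estimate you propose to carry out is structurally different from the paper's, and the step you defer as ``the hard part'' is not bookkeeping -- it is the proof.

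Two concrete issues. First, your partition $\beta$ is the wrong object: it has on the order of $n\cdot\#\gamma$ atoms (one atom $T^i(B\cap G)$ for each level $i$ and each $G\in\gamma$), and since $\gamma\succeq\cU_0^{n-1}$, $\#\gamma$ grows at least like $(\#\cU)^n$. The paper instead builds a partition $\alpha$ with only $2M$ atoms, $M=\#\cU$: it takes $\hat A_m:=\bigcup\{T^jB_{i_0,\ldots,i_{n-1}}: i_j=m\}$ on the tower, so the tower levels are \emph{merged} by which $\cU$-element they lie in, and the $\cU_0^{n-1}$-information is recovered only after iterating. This coarseness is essential to keep any entropy estimate finite as $n\to\infty$. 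Second, your plan bounds $h_\mu(T,\beta|Y)\le\frac{1}{n}H_\mu(\beta_0^{n-1}|\pi^{-1}\epsilon_Y)$ at the \emph{same} scale $n$ as the tower height. A single $n$-block cannot ``see'' the Rohlin structure: orbit segments starting at different tower levels $T^iB$ have incommensurable $\beta_0^{n-1}$-names, the tail of $\mu_y$ outside the $N_{\mu_y}(\cU_0^{n-1},\rho)$ heavy atoms enters the $n$-block entropy with weight $\rho$ against a $\log$ of the full atom count, and the off-tower contribution is not simply $\eta\log m$. The paper avoids all of this by passing to a much larger scale $k\gg n$: it uses the Birkhoff ergodic theorem (sets $L_k$, $J_k$) to control the frequency of base visits along a typical $k$-orbit, and the conditional SMB theorem (Theorem \ref{smb0}) to get a pointwise \emph{lower} bound $\mu_y(\alpha_0^{k-1}(x))<e^{-(h_\mu(T,\alpha|Y)-\eta)k}$. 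The inequality then comes from comparing $\mu_y(G_k\cap\pi^{-1}\pi y)\ge 1-4\eta$ with (number of $\alpha_0^{k-1}$-atoms needed to cover $G_k\cap\pi^{-1}\pi y$)$\times$(max atom measure); this is a packing/counting argument, not an entropy bound, and it is immune to the $\rho$-tail because the tail simply contributes to the count, not to a $\rho\log(\cdot)$ term. The extra factor $e^{kH(\eta^2+2n/k)}$ accounting for the placement of tower returns is another piece of this counting argument that your outline has no analogue of. In short: what you flag at the end as remaining bookkeeping is precisely where the paper switches to a different mechanism (SMB + Birkhoff + covering count at scale $k\gg n$ for a $2M$-atom partition), and your $\frac{1}{n}H_\mu(\beta_0^{n-1}|\cdot)$ bound will not close without essentially reproving the lemma along those lines.
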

\begin{proof}
Let $\mu\in \cE_T(X)$. If the system $(X,T)$ is periodic, then $\mu$ is supported on a fixed point of $T$ and
$$\underline{h}^{S}_{\mu}(T,\cU|Y)=h_\mu(T,\cU|Y)=0.$$
Thus let us assume $(X,T)$ is aperiodic.

Fix $n\in \NN$. Let $\b$ be constructed as in the proof of Lemma \ref{partition} for $\cV\succeq \cU_0^{n-1}$. We also use the notation from that proof, for example, $A$ is the subset of $X$ such that $\mu(A)<\rho$ and for any $x\notin A$, $N_{\mu_x}(\b, \rho)\le N_{\mu_x}(\cU_0^{n-1}, \rho)$. Choose $\d>0$ such that $0<\rho+\d<1/4$. By Lemma \ref{Rohlin}, we can construct a strong Rohlin tower with respect to $\b$, with height $n$ and error $<\d$. Let $\tilde B$ denote the base of the tower and $B=\tilde B \setminus A$. Clearly, $\mu (B)>(1-\rho)\mu (\tilde B)$ and $\mu(E)\ge 1-(\rho+\d)$ where $E=\cup_{i=0}^{n-1}T^iB$. Consider $\b|_{\tilde B}$ and index its elements by sequences $i_0, \cdots, i_{n-1}$ such that if $B_{i_0, \cdots, i_{n-1}}\in \b|_{\tilde B}$, then $T^jB_{i_0, \cdots, i_{n-1}} \subset U_{i_j}$ for every $0\le j\le n-1$. Let $\hat \a:=\{\hat A_1, \cdots, \hat A_M\}$ be a partition of $E$ defined by
$$\hat A_m:=\cup\{T^jB_{i_0, \cdots, i_{n-1}}: 0\le j\le n-1, i_j=m\}.$$
Note that $\hat A_m \subset U_m$ for every $1\le m\le M$. Extend $\hat \a$ to a partition $\a$ of $X$ in some way such that $\a \succeq \cU$ and $\#\a=2M$.

Set $\eta^4=\rho+\d$ and define for every $k>n$ large enough, $f_k(x)=\frac{1}{k}\sum_{i=0}^{k-1}\chi_E(T^ix)$ and $L_k:=\{x\in X: f_k(x)>1-\eta^2\}$.
Then by Birkhoff ergodic theorem $\int f_k >1-\eta^4$, and
 $$\eta^2\cdot \mu(L_k^c)\le \int_{L_k^c}1-f_k\le \int_{X}1-f_k\le \eta^4$$
which gives $\mu(L_k)\ge 1-\eta^2$.
Put $J_k$ to be the set of $x\in X$ such that for any $j\ge k$,
\begin{equation}\label{e:birkhoff2}
\mu_x(\alpha_0^{j-1}(x))<\exp(-(h_\mu(T,\a|Y)-\eta)j),
\end{equation}
and
\begin{equation}\label{e:cover}
|\frac{1}{j}\sum_{i=0}^{j-1}\log N_{\mu_{T^{i}x}} (\cU_0^{n-1},\rho) \chi_B(T^ix)-\int_B \log N_{\mu_{z}} (\cU_0^{n-1},\rho) d\mu(z)|\le \eta.
\end{equation}
By Theorem \ref{smb0} and the Birkhoff ergodic theorem, $\mu(J_k)>1-\eta^2$ for $k$ large enough. Set $G_k=L_k\cap J_k$ and then $\mu(G_k)>1-2\eta^2$.
Define $\tilde G_k=\{x\in G_k:\mu_x(G_k)\geq 1-4\eta\},$
then
$$\tilde G_k^c=\{x\in G_k:\mu_x(G_k)<1-4\eta\}\cup G_k^c=\{x\in G_k:\mu_x(G^c_k)>4\eta\}\cup G_k^c.$$
Therefore,
$$ \mu(\tilde G_k^c)\cdot 4\eta\leq\int\mu_x(G_k^c)d\mu(x)+\mu(G_k^c)=2\mu(G_k^c)\le 4\eta^2,$$
i.e., $\mu(\tilde G_k^c)\le \eta$.

We fix an element $C_y$ of this partition of $G_k\cap \pi^{-1}\pi y$ and want to estimate the number of $\a_0^{n-1}$-elements needed to cover it. If $0 \le i_1 < \cdots < i_m\le k-n$ are the
times elements of $C_y$ visit $B$, then we need at most $N_{\mu_{T^{i_j}y}} (\cU_0^{n-1},\rho)$ $\a_{i_j}^{i_j+n-1}$-elements
to cover $C_y$. Because the size of $[0, k-1]\setminus \cup_j[i_j, i_j+n-1]$ is at most $\eta^2 k+2n$,
we need at most $\prod_{j=1}^mN_{\mu_{T^{i_j}y}} (\cU_0^{n-1},\rho)\cdot (2M)^{\eta^2 k+2n}$ $\a_0^{k-1}$-elements to cover $C_y$. Finally, in view of $\eqref{e:cover}$, we know that $G_k\cap \pi^{-1}\pi y$ can be covered by no more than
\begin{equation}\label{e:firstcount}
e^{kH(\eta^2+2n/k)}\cdot (2M)^{\eta^2 k+2n}\cdot e^{k(\int_B \log N_{\mu_{z}} (\cU_0^{n-1},\rho)d\mu(z)+\eta)}
\end{equation}
$\a_0^{k-1}$-elements.
Since $y\in G_k\subset J_k$, any $V\in \a_0^{k-1}$ intersecting nontrivially with $G_k\cap \pi^{-1}\pi y$ has $\mu_y$-measure less than
$\exp(-(h_\mu(T,\a|Y)-\eta)k)$ by \eqref{e:birkhoff2}. Thus we have
\begin{equation}\label{e:count1}
\begin{aligned}
&1-4\eta\le \mu_y(G_k\cap \pi^{-1}\pi y)\\
\le &e^{-(h_\mu(T,\a|Y)-\eta)k}e^{kH(\eta^2+2n/k)}\cdot (2M)^{\eta^2 k+2n}
\cdot   e^{k(\int_B \log N_{\mu_{z}} (\cU_0^{n-1},\rho)d\mu(z)+\eta)}.
\end{aligned}
\end{equation}
Recall that the distribution of $\b$ is the same as the distribution of the partition $\b|_{\tilde B}$ and $z\mapsto N_{\mu_z} (\cU_0^{n-1},\rho)$ is constant on each atom of $\b|_{X\setminus A}$ by Lemma \ref{partition}.
Then by \eqref{e:count1} and setting $k\to \infty$, we get
\begin{equation*}
\begin{aligned}
&h_\mu(T,\a|Y)\\
\le &\eta+H(\eta^2)+\eta^2 \log (2M)+\int_B \log N_{\mu_{z}} (\cU_0^{n-1},\rho)d\mu(z)+\eta\\
\le &2\eta+H(\eta^2)+\eta^2 \log (2M)+\frac{1}{n}\int \log N_{\mu_{z}} (\cU_0^{n-1},\rho) d\mu(z).
\end{aligned}
\end{equation*}
By letting $\d\to 0$, we obtain
\begin{equation*}
\begin{aligned}
h_\mu(T,\a|Y)
\le 2\rho^{\frac{1}{4}}+H(\rho^{\frac{1}{2}})+\rho^{\frac{1}{2}} \log (2M)+\frac{1}{n}\int \log N_{\mu_{z}} (\cU_0^{n-1},\rho) d\mu(z).
\end{aligned}
\end{equation*}
Taking $\liminf_{n\to \infty}$ and then $\lim_{\rho\to 0}$, we have
\begin{equation*}
\begin{aligned}
h_\mu(T,\cU|Y)\le h_\mu(T,\a|Y)\le \underline{h}^{S}_{\mu}(T,\cU|Y).
\end{aligned}
\end{equation*}
\end{proof}

\end{document}